\theoremstyle{plain}
\newtheorem{thm}{\protect\theoremname}[section]
\theoremstyle{plain}
\newtheorem{prop}[thm]{\protect\propositionname}
\theoremstyle{definition}
\newtheorem{defn}[thm]{\protect\definitionname}
\theoremstyle{plain}
\newtheorem{lem}[thm]{\protect\lemmaname}
\theoremstyle{definition}
\newtheorem{example}[thm]{\protect\examplename}
\theoremstyle{remark}
\newtheorem{rem}[thm]{\protect\remarkname}
\theoremstyle{plain}
\newtheorem{cor}[thm]{\protect\corollaryname}
\theoremstyle{plain}
 \newlist{casenv}{enumerate}{4}
 \setlist[casenv]{leftmargin=*,align=left,widest={iiii}}
 \setlist[casenv,1]{label={{\itshape\ \casename} \arabic*.},ref=\arabic*}
 \setlist[casenv,2]{label={{\itshape\ \casename} \roman*.},ref=\roman*}
 \setlist[casenv,3]{label={{\itshape\ \casename\ \alph*.}},ref=\alph*}
 \setlist[casenv,4]{label={{\itshape\ \casename} \arabic*.},ref=\arabic*}
\providecommand{\definitionname}{Definition}
\providecommand{\examplename}{Example}
\providecommand{\lemmaname}{Lemma}
\providecommand{\remarkname}{Remark}
\providecommand{\theoremname}{Theorem}
\providecommand{\casename}{Case}
\providecommand{\corollaryname}{Corollary}
\providecommand{\factname}{Fact}
\providecommand{\propositionname}{Proposition}
\newcommand\Cref[1]{{Corollary~\ref{#1}}}
\newcommand\Lref[1]{{Lemma~\ref{#1}}}
\newcommand\Tref[1]{{Theorem~\ref{#1}}}
\newcommand\Eref[1]{{Example~\ref{#1}}}
\global\long\def\gr{\mathrm{gr}}
\global\long\def\R{\mathcal{R}}
\global\long\def\layer#1#2{\overset{\left[#2\right]}{}#1}
\global\long\def\zero{\layer{0}{0}}
\global\long\def\one{\layer{0}{1}}
\global\long\def\minus{\layer{0}{-1}}
\global\long\def\zeroset#1{\overset{\left[0\right]}{}#1}
\global\long\def\ELTrop{\mathrm{ELTrop}}
\global\long\def\Trop{\mathrm{Trop}}
\global\long\def\ELT#1#2{\mathscr{R}\left(#2,#1\right)}
\global\long\def\minf{-\infty}
\global\long\def\L{\mathscr{L}}
\global\long\def\F{\mathscr{F}}
\global\long\def\t{\tau}
\global\long\def\quo#1#2{\raisebox{.2em}{\ensuremath{#1}}\left/\raisebox{-.2em}{\ensuremath{#2}}\right.}
\global\long\def\symmdash{\succeq_\circ}
\newcommand{\RR}{\mathbb{R}}      % for Real numbers
\newcommand{\CC}{\mathbb{C}}
\DeclareMathOperator{\sign}{sign}
\DeclareMathOperator{\ELTsubmatrixrank}{ELT-submatrix-rank}
\DeclareMathOperator{\ELTkapranovrank}{ELT-Kapranov-rank}
\DeclareMathOperator{\ELTrank}{ELT-rank}
\DeclareMathOperator{\rank}{rank}
\DeclareMathOperator{\Kapranovrank}{Kapranov-rank}
\begin{document}
\title{Exploded Layered Tropical Linear Algebra}
\date{\today}

\author[Guy Blachar]{Guy Blachar}
\address{Department of Mathematics, Bar-Ilan University, Ramat-Gan 52900,
Israel.} \email{\href{mailto:blachag@biu.ac.il}{blachag@biu.ac.il}}

\author[Erez Sheiner]{Erez Sheiner}
\address{Department of Mathematics, Bar-Ilan University, Ramat-Gan 52900,
Israel.} \email{\href{mailto:erez@math.biu.ac.il}{erez@math.biu.ac.il}}

\thanks{This article contains work from Erez Sheiner's Ph.D.\ Thesis, which was accepted on 1.1.16, and from Guy Blachar's M.Sc.\ Thesis, both submitted to the Math Department at Bar-Ilan University. Both works were carried under the supervision of Prof.\ Louis Rowen from Bar-Ilan University, to whom we thank deeply for his help and guidance.}
\thanks{The authors would also like to thank the referee for his helpful comments.}

\subjclass[2010]{Primary: 15A03, 15A09,  15A15, 15A63; Secondary:
16Y60, 14T05. }

%******************************* keywords *********************************
\keywords{Tropical algebra, ELT algebra, matrix theory, determinant, rank, inner products}

\begin{abstract}
Exploded layered tropical (ELT) algebra is an extension of tropical algebra with a structure of layers. These layers allow us to use classical algebraic results in order to easily prove analogous tropical results. Specifically we study the connection between the ELT determinant and linear dependency, and use a generalized version of Kapranov Theorem proved in \cite{DSS} (called the Fundamental Theorem).\\
In this paper we prove that an ELT matrix is singular if and only if its rows are linearly dependent and that the row rank and submatrix rank of an ELT matrix are equal. We also define an ELT rank for a tropical matrix, and prove that it is equal to its Kapranov rank. In addition, we formalize the concept of ELT inner products, and prove ELT versions of some known theorems such as Cauchy-Schwarz inequality.
\end{abstract}

\maketitle

\tableofcontents
\addtocontents{toc}{~\hfill\textbf{Page}\par}

\section{Introduction}

Tropical linear algebra, also known as Max-Plus linear algebra, has been studied for more than 50 years (ref.~\cite{B}). While tropical geometry mainly deals with geometric combinatorial problems, tropical linear algebra deals with algebraic non-linear combinatorial problems (for instance, the assignment problem \cite{K}). Tropical linear algebra may also be used as a mean to study the tropical algebraic geometry (for instance, the tropical resultant). Notable work in this field can be found at \cite{B, DSS, IR4, IR3, S}.\\

The main results in tropical geometry tend to use a combinatorial approach, and a great volume of work has been done in order to create equivalent algebraic definitions. Basic notions such as \linebreak matrix rank (\cite{DSS, Beasley2015}), bases (\cite{izhakian2013}), varieties (\cite{izhakian2011}), polynomial factorization (\cite{sheiner2012}) and congruences (\cite{perri2013}) are studied in different settings, and different theorems are discovered in each. In this paper we study the linear algebra induced by viewing tropical mathematics as a valuation of Puiseux series. Although motivated by Puiseux series, our results work in a more general setting.\\

In tropical linear algebra, there are many definitions for linear dependency of vectors (as in \cite{Akian2008, izhakian2013}). The classical definition is the following: a set of vectors is linearly dependent if for some non-trivial linear combination the maximal entry at each column is obtained at least twice. For instance, the vectors
$$w_1=(1,2,0), w_2=(0,3,2), w_3=(0,0,0)$$
are linearly dependent. Indeed,
$$(1\odot w_1) \oplus (w_2) \oplus (2\odot w_3) = (2,3,1)\oplus (0,3,2)\oplus (2,2,2) = (2,3,2).$$

We introduce an extension of the max-plus algebra with layers, called exploded layered tropical algebra (or ELT algebra for short). This structure is a generalization of the work of Izhakian and Rowen (\cite{IR1}), and is similar to Parker's exploded structure (\cite{PR}). In ELT linear algebra, a set of vectors is linearly dependent if for some non-trivial linear combination, all of the layers equal zero. For instance, the vectors
$$v_1=(\layer{1}{1},\layer{2}{1},\layer{0}{1}),v_2=(\layer{0}{1},\layer{3}{1},\layer{2}{1}),v_1=(\layer{0}{-1},\layer{0}{1},\layer{0}{1})$$
are linearly dependent. Indeed,
$$\layer{1}{1}v_1 + \layer{0}{-1}v_2 + \layer{2}{1}v_3 = (\layer{2}{0},\layer{3}{0},\layer{2}{0}).$$\\

We notice that while $u_1=(1,1), u_2=(1,1)$ are clearly linearly dependent in tropical algebra, the two vectors
$$u'_1=(\layer{1}{1},\layer{1}{-1}),u'_2=(\layer{1}{-1},\layer{1}{1})$$
are independent. Geometrically the span of these two vectors is equal to a span of one vector. However, the layers of these two spans differs. Naturally we would like to know what is the maximal size of an independent set. \\

In addition, in the literature there are several definitions of tropical ranks (see \cite{DSS, Akian2008, Akian2012, Beasley2015}). In this paper we study ELT ranks: the usual row and column ranks, the ELT submatrix rank, the ELT Kapranov rank and the ELT Barvinok rank.\\

One of the results presented in this paper is that the size of a maximal independent set is exactly~$n$ (\Cref{cor:n+1 vectors are dependent}). Furthermore, the maximal number of linearly independent rows of a matrix (called row rank) is always equal to the maximal number of independent columns.\\

We further study ELT linear algebra, and present the following main results in this paper:
\begin{enumerate}
  \item Formulation of the natural properties of linear dependence and determinant: A matrix is singular if and only if its rows are linearly dependent (Theorem \ref{minusinftydet}).
  \item The row rank and the column rank of a matrix are equal (Theorem \ref{rank}).
  \item The ELT rank and Kapranov rank of a tropical matrix are equal (Theorem \ref{eltkapranov}, Lemma \ref{eltkapranovlem}).
  \item ELT versions of Cauchy-Schwarz inequality (\Tref{thm:cauchy-schwarz}) and Bessel's inequality (\Tref{thm:bessel-inequality}).
\end{enumerate}

\subsection{ELT Algebras}

Exploded Layered Tropical algebras, or ELT algebras for short, arise from Parker's exploded semiring, which he used to study the Gromov Witten invariant (see \cite{PR}).

\begin{defn}
Let $\L$ be a semiring, and $\F$ a totally ordered semigroup. An \textbf{ELT algebra} is the pair $\R=\ELT{\F}{\L}$, whose elements are denoted $\layer a{\ell}$ for $a\in\F$ and $\ell\in\L$, together with the semiring (without zero) structure:
\begin{enumerate}
\item $\layer{a_{1}}{\ell_{1}}+\layer{a_{2}}{\ell_{2}}:=\begin{cases}
\layer{a_{1}}{\ell_{1}} & a_{1}>a_{2}\\
\layer{a_{2}}{\ell_{2}} & a_{1}<a_{2}\\
\layer{a_{1}}{\ell_{1}+_\L\ell_{2}} & a_{1}=a_{2}
\end{cases}$.
\item $\layer{a_{1}}{\ell_{1}}\cdot\layer{a_{2}}{\ell_{2}}:=\layer{\left(a_{1}+_\F a_{2}\right)}{\ell_{1}\cdot_\L\ell_{2}}$.
\end{enumerate}
We write . For $\layer{a}{\ell}$, $\ell$ is called the \textbf{layer}, whereas $a$ is called the \textbf{tangible value}.
\end{defn}

We rewrite Parker's notation $\ell\mathfrak{t}^a$ to $\layer{a}{\ell}$, in order not to be confused with $0\mathfrak{t}^a\neq 0$.\\

Let $\R$ be an ELT algebra. We write $s:\R\rightarrow\L$ for the projection on the first component (the \textbf{sorting map}):
$$s\left(\layer a{\ell}\right)=\ell$$
We also write $\t:\R\rightarrow\F$ for the projection on the second component:
$$\t\left(\layer a{\ell}\right)=a$$
We denote the \textbf{zero-layer subset}
$$\zeroset{\R}=\left\{\alpha\in \R\middle| s\left(\alpha\right)=0\right\}$$
and
$$\R^{\times}=\left\{\alpha\in\R\middle|s\left(\alpha\right)\neq 0\right\}=\R\setminus\zeroset{\R}$$\\

We note some special cases of ELT algebras.
\begin{example}
Let $\left(G,\cdot\right)$ be a totally ordered group. We denote by $G_{\max}$ the max-plus algebra defined over $G$, i.e.\ the set $G$ endowed with the operation
$$a\oplus b=\max\left\{a,b\right\},\;\;a\odot b=a\cdot b.$$
Then $G_{\max}$ is equivalent to the trivial ELT algebra with $\F=G$ and $\L=\left\{1\right\}$.
\end{example}

\begin{example}
Zur Izhakian's supertropical algebra (\cite{IZ}) is equivalent to an ELT algebra with a layering set $\L=\left\{1,\infty\right\}$, where
$$1+1=\infty,\;\;1+\infty=\infty+1=\infty,\;\;\infty+\infty=\infty$$
and
$$1\cdot 1=1,\;\;1\cdot\infty=\infty\cdot 1=\infty,\;\;\infty\cdot\infty=\infty.$$

The supertropical "ghost" elements $a^\nu$ correspond to $\layer{a}{\infty}$ in the ELT notation, whereas the tangible elements $a$ correspond to $\layer{a}{1}$.
\end{example}

We define a partial order relation $\vDash$ on $\R$ in the following way:
$$x\vDash y\Longleftrightarrow \exists z\in\zeroset{\R}:x=y+z$$

\begin{lem}
$\vDash$ is a partial order relation on $\R$.
\end{lem}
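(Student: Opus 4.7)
The plan is to verify the three defining properties of a partial order in turn---reflexivity, transitivity, and antisymmetry---using only the three-case definition of addition in $\R$. A preliminary observation I would record first is that $\zeroset{\R}$ is closed under addition: if $z_1,z_2\in\zeroset{\R}$, then in each branch of the addition rule the resulting layer is either the layer of one of the summands (namely $0$) or $0+_\L 0=0$, so $z_1+z_2\in\zeroset{\R}$.

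For reflexivity I would exhibit an explicit witness. Taking $z=\layer{\t(x)}{0}$ gives $z\in\zeroset{\R}$, and since the tangible values agree, $x+z=\layer{\t(x)}{s(x)+_\L 0}=x$, so $x\vDash x$. Transitivity is then a one-line calculation: given $x=y+z_1$ and $y=w+z_2$ with $z_i\in\zeroset{\R}$, associativity yields $x=w+(z_2+z_1)$, and the closure observation above shows $z_2+z_1\in\zeroset{\R}$, hence $x\vDash w$.

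The main obstacle is antisymmetry. Suppose $x=y+z_1$ and $y=x+z_2$ with $z_1,z_2\in\zeroset{\R}$; I would case-split on the comparison of $\t(y)$ with $\t(z_1)$. If $\t(y)>\t(z_1)$ the addition rule gives $x=y$ immediately, and if $\t(y)=\t(z_1)$ then $x=\layer{\t(y)}{s(y)+_\L 0}=y$. The only remaining possibility is $\t(y)<\t(z_1)$, which forces $x=z_1$, so in particular $\t(x)>\t(y)$ and $s(x)=0$. Substituting this into $y=x+z_2$ and repeating the same analysis on $\t(x)$ versus $\t(z_2)$, each of the three sub-cases (strict $>$, strict $<$, or equality) yields an element $y$ of tangible value at least $\t(x)$, contradicting $\t(x)>\t(y)$. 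Hence $x=y$, completing the proof.
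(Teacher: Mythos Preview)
Your proof is correct. Reflexivity and transitivity match the paper's argument (the paper writes the reflexivity witness as $\zero x$, which is exactly your $\layer{\t(x)}{0}$, and likewise uses closure of $\zeroset{\R}$ under addition for transitivity).

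For antisymmetry you take a genuinely different route. The paper avoids all case analysis by exploiting idempotency: since $z_1\in\zeroset{\R}$ one has $z_1+z_1=z_1$, hence $x=y+z_1=y+z_1+z_1=x+z_1$; substituting into $y=x+z_2$ gives $y=(x+z_2)+z_1=y+z_1=x$ in one line. Your argument instead works directly with the three-branch addition rule and compares tangible values. Both are valid; the paper's trick is shorter and uses only the abstract semiring structure (it would work verbatim in any semiring where the ``zero-layer'' ideal consists of additive idempotents), while your approach is more hands-on and makes explicit what is happening at the level of $\F$ and $\L$. Incidentally, your final case analysis can be compressed: from $y=x+z_2$ the addition rule always gives $\t(y)\ge\t(x)$, so the contradiction with $\t(x)>\t(y)$ is immediate without splitting into three sub-cases.
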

\begin{proof}
We prove that $\vDash$ is reflexive, antisymmetric and transitive.
\begin{enumerate}
  \item Reflexivity -- given $x\in\R$, we have that $x=x+\zero x$. Since $\zero x\in\zeroset{\R}$, $x\vDash x$.
  \item Antisymmetry -- suppose that $x,y\in\R$ satisfy $x\vDash y$ and $y\vDash x$. By definition, there exist $z_1,z_2\in\zeroset{\R}$ such that $x=y+z_1$ and $y=x+z_2$. We note that since $z_1\in\zeroset{\R}$, $z_1+z_1=z_1$. Therefore,
      $$x=y+z_1=y+z_1+z_1=x+z_1$$
      which proves that
      $$y=x+z_2=x+z_1+z_2=\left(x+z_2\right)+z_1=y+z_1=x$$
      as required.
  \item Transitivity -- let $x_1,x_2,x_3\in\R$ such that $x_1\vDash x_2$ and $x_2\vDash x_3$. Then there exist $z_1,z_2\in\zeroset{\R}$ such that $x_1=x_2+z_1$ and $x_2=x_3+z_2$. Therefore,
      $$x_1=x_3+\left(z_1+z_2\right).$$
      Since $z_1+z_2\in\zeroset{\R}$, we showed that $x_1\vDash x_3$.
\end{enumerate}
\end{proof}

Let us point out some important elements in any ELT algebra $\R$:
\begin{enumerate}
\item $\layer{0}{1}$, which is the multiplicative identity of $\R$.
\item $\layer{0}{0}$, which is idempotent for both operations of $\R$.
\item $\layer{0}{-1}$, which has the role of ``$-1$'' in our theory.
\end{enumerate}

Note that $\zero\cdot\layer{a}{\ell}=\layer{a}{0}$. Therefore, $\zeroset{\R}=\zero\,\R$. In particular, $\zeroset{\R}$ is an ideal of $\R$.

\subsection{The Element $-\infty$}\label{sub:the-element-minf}

As in the tropical algebra, ELT algebras lack an additive identity. Therefore, we adjoin a formal element to the ELT algebra $\R$, denoted by $-\infty$, which satisfies $\forall\alpha\in\R$:
$$\begin{array}{c}
-\infty+\alpha=\alpha+-\infty=\alpha\\
-\infty\cdot\alpha=\alpha\cdot-\infty=-\infty
\end{array}$$
We also define $s\left(-\infty\right)=0$. We denote $\overline{\R}=\R\cup\left\{-\infty\right\}$.\\

We note that $\overline{\R}$ is now a semiring, with the following property:
$$\alpha+\beta=-\infty\Longrightarrow\alpha=\beta=-\infty$$
Such a semiring is called an \textbf{antiring}. Antirings are dealt with in \cite{Tan2007, Dolzan2008}.

\begin{lem}\label{lem:surpass-minus-infty-means-zero-layer}
Let $x\in\R$. Then $s\left(x\right)=0$ if and only if $x\vDash-\infty$.
\end{lem}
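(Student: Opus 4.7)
The plan is to just unfold the definitions on both sides, using the defining property of the adjoined element $-\infty$ (namely $-\infty + \alpha = \alpha$ for all $\alpha$). The statement is essentially a tautology once one observes that adding $-\infty$ is a no-op.

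For the forward direction, suppose $s(x) = 0$, so $x \in \zeroset{\R}$. I need to exhibit some $z \in \zeroset{\R}$ with $x = -\infty + z$. The obvious choice is $z = x$ itself, which lies in $\zeroset{\R}$ by hypothesis, and $-\infty + x = x$ by the defining property of $-\infty$ from \sSref{sub:the-element-minf}. This gives $x \vDash -\infty$.

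For the reverse direction, suppose $x \vDash -\infty$. By definition of $\vDash$ (extended to $\overline{\R}$ in the natural way), there is some $z \in \zeroset{\R}$ with $x = -\infty + z$. Again by the defining property of $-\infty$, the right-hand side collapses to $z$, so $x = z \in \zeroset{\R}$, whence $s(x) = 0$.

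The only subtlety worth mentioning is that the relation $\vDash$ was originally defined on $\R$, not on $\overline{\R}$, so one is implicitly extending the definition to allow $y = -\infty$; this is the natural extension and introduces no difficulty. There is no real obstacle here — the lemma is a direct consequence of the definitions and serves mainly to justify thinking of $-\infty$ as the minimal element of the surpassing order.
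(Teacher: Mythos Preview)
Your proof is correct and essentially identical to the paper's: both directions use exactly the witness $z = x$ for the forward implication and the collapse $-\infty + z = z$ for the reverse. Your remark about extending $\vDash$ to allow $y = -\infty$ is a welcome clarification that the paper leaves implicit.
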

\begin{proof}
On the one hand, if $s\left(x\right)=0$, then $x=\left(-\infty\right)+x$, and thus $x\vDash-\infty$.

On the other hand, if $x\vDash-\infty$, then there exists $z\in\zeroset{\overline{\R}}$ such that
$$x=\left(-\infty\right)+z=z$$
Therefore $x=z$, so $s\left(x\right)=0$.
\end{proof}

We also have the following useful lemma:
\begin{lem}\label{lem:t-equals-criterion}
Let $x,y\in\R$.
\begin{enumerate}
  \item $\t\left(x\right)\leq\t\left(y\right)$ if and only if there exists $a\in\overline{\R}$ such that $y=x+a$.
  \item $\t\left(x\right)=\t\left(y\right)$ if and only if there exist $a,b\in\overline{\R}$ such that $x=y+a$ and $y=x+b$.
\end{enumerate}
\end{lem}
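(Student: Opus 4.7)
The plan is to prove part (1) by direct case analysis on how $\t(x)$ and $\t(a)$ compare in the definition of ELT addition, and then to deduce part (2) by applying part (1) twice.

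For the backward direction of part (1), I suppose $y = x + a$ and compute $\t(y)$. If $a = -\infty$, then $y = x$, so $\t(y) = \t(x)$. Otherwise I split into the three branches of the addition rule: if $\t(a) > \t(x)$ then $y = a$; if $\t(a) < \t(x)$ then $y = x$; and if $\t(a) = \t(x)$ then $y = \layer{\t(x)}{s(x) +_{\L} s(a)}$. In every branch $\t(y) = \max\{\t(x), \t(a)\} \geq \t(x)$.

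For the forward direction, I build $a$ explicitly. If $\t(x) < \t(y)$, I choose $a = y$: the first branch of the addition rule then gives $x + y = y$. If $\t(x) = \t(y)$, the candidate $a = -\infty$ works when $x = y$; otherwise I take $a$ with $\t(a) = \t(y)$ and layer $\ell \in \L$ chosen so that $s(x) +_{\L} \ell = s(y)$, forcing the equal-tangible branch to output $y$. Part (2) then reduces to part (1): the condition $\t(x) = \t(y)$ is the conjunction of $\t(x) \leq \t(y)$ and $\t(y) \leq \t(x)$, which by part (1) applied to the pairs $(x, y)$ and $(y, x)$ is exactly the existence of $b$ with $y = x + b$ together with $a$ with $x = y + a$.

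The main obstacle I anticipate is the equal-tangible sub-case of the forward direction of part (1): producing $\ell \in \L$ with $s(x) +_{\L} \ell = s(y)$ is not automatic in an arbitrary semiring. I expect this step either to exploit the particular layering structure in the intended applications, or to be handled by a careful use of $a = -\infty$ together with the semiring identity relating $s(x)$ to itself via the additive neutral element of $\L$, so that the boundary cases where no proper $\ell$ exists already force $x = y$.
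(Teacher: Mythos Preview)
Your approach is essentially identical to the paper's: both reduce part (2) to part (1), handle the backward direction of (1) directly from the definition of ELT addition, and for the forward direction take $a=y$ when $\t(x)<\t(y)$ and construct $a$ with the same tangible value and a suitably chosen layer when $\t(x)=\t(y)$.

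The obstacle you flag---producing $\ell\in\L$ with $s(x)+_\L\ell=s(y)$---is a non-issue in this paper. Although the formal definition of an ELT algebra allows $\L$ to be a semiring, the paper immediately singles out $\layer{0}{-1}$ as a distinguished element of every ELT algebra and later equips $\R$ with the negation map $(-)a=\layer{0}{-1}\cdot a$; in other words, $\L$ is tacitly a ring. The paper's proof simply sets $a=\layer{\alpha}{k-\ell}$ where $x=\layer{\alpha}{\ell}$ and $y=\layer{\alpha}{k}$, using subtraction in $\L$. Your speculation about rescuing the step via $a=-\infty$ or semiring identities leads nowhere and is unnecessary once you note that additive inverses are available in $\L$.
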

\begin{proof}
We first note that the second assertion follows from the first. Thus, it suffices to prove the first assertion.

Assume $\t\left(x\right)\leq\t\left(y\right)$.
\begin{itemize}
  \item If $\t\left(x\right)<\t\left(y\right)$, take $a=y$. We get $y=x+y=x+a$.
  \item If $\t\left(x\right)=\t\left(y\right)$ and $x=y=-\infty$, one may take $a=-\infty$ to see that $y=x+a$.
  \item If $\t\left(x\right)=\t\left(y\right)$ and $x,y\neq-\infty$, write $x=\layer{\alpha}{\ell}$, $y=\layer{\alpha}{k}$, and let $a=\layer{\alpha}{k-\ell}$. Thus
      $$x+a=\layer{\alpha}{\ell} + \layer{\alpha}{k-\ell}=\layer{\alpha}{k}=y$$
\end{itemize}
In any case, we have shown the existence of $a\in\overline{\R}$ such that $y=x+a$, as required.

Now, suppose that there exists some $a\in\overline{\R}$ such that $y=x+a$. By the definition of the addition, we automatically get $\t\left(x\right)\leq\t\left(y\right)$.
\end{proof}

\subsection{Non-Archimedean Valuations and Puiseux Series}

We recall the definition of a (non-Archimedean) valuation (see \cite{Efrat2006, Tignol2015}).

\begin{defn}
Let $K$ be a field, and let $\left(\Gamma,+,\ge\right)$ be an abelian totally ordered group. Extend $\Gamma$ to $\Gamma\cup\left\{\infty\right\}$ with $\gamma<\infty$ and $\gamma+\infty=\infty+\gamma=\infty$ for all $\gamma\in\Gamma$. A function $v:K\to\Gamma\cup\left\{\infty\right\}$ is called a \textbf{valuation}, if the following properties hold:
\begin{enumerate}
  \item $v\left(x\right)=\infty\Longleftrightarrow x=0$.
  \item $\forall x,y\in K: v\left(xy\right)=v\left(x\right)+v\left(y\right)$.
  \item $\forall x,y\in K: v\left(x+y\right)\ge\min\left\{v\left(x\right), v\left(y\right)\right\}$.
\end{enumerate}
\end{defn}

Given a valuation $v$ over a field $K$, we recall some basic properties:
\begin{enumerate}
  \item $v\left(1\right)=0$.
  \item $\forall x\in K:v\left(-x\right)=v\left(x\right)$.
  \item $\forall x\in K^{\times}:v\left(x^{-1}\right)=-v\left(x\right)$.
  \item If $v\left(x+y\right)>\min\left\{v\left(x\right), v\left(y\right)\right\}$, then $v\left(x\right)=v\left(y\right)$. (For this reason, the equality between the valuation of two elements is central in out theory.)
\end{enumerate}

One may associate with $v$ the \textbf{valuation ring}
$$\mathcal{O}_v=\left\{x\in K\middle|v\left(x\right)\ge 0\right\}$$
This is a local ring with the unique maximal ideal
$$\mathfrak{m}_v=\left\{x\in K\middle|v\left(x\right)>0\right\}$$
The quotient $k_v=\quo{\mathcal{O}_v}{\mathfrak{m}_v}$ is called the \textbf{residue field} of the valuation.\\

Let us present another key construction related to valuations. For $\gamma\in\Gamma$, let $D_{\ge\gamma}=\left\{x\in K\middle|v\left(x\right)\ge\gamma\right\}$ and $D_{>\gamma}=\left\{x\in K\middle|v\left(x\right)>\gamma\right\}$. It is easily seen that $D_{\ge\gamma}$ is an abelian additive group, and that $D_{>\gamma}$ is a subgroup of $D_{\ge\gamma}$. Note that for $\gamma=0$, $D_{\ge 0}=\mathcal{O}_v$ and $D_{>0}=\mathfrak{m}_v$. Set $D_\gamma=\quo{D_{\ge\gamma}}{D_{>\gamma}}$. The \textbf{associated graded ring} of $K$ with respect to $v$ is
$$\gr_v\left(K\right)=\bigoplus_{\gamma\in\Gamma}D_\gamma$$
Given $\gamma_1,\gamma_2\in\Gamma$, the multiplication in $K$ induces a well-defined multiplication $D_{\gamma_1}\times D_{\gamma_2}\to D_{\gamma_1+\gamma_2}$ given by
$$\left(x_1+D_{>\gamma_1}\right)\cdot\left(x_2+D_{>\gamma_2}\right)=x_1x_2+D_{>\left(\gamma_1+\gamma_2\right)}$$
This multiplication can be extended to a multiplication map in $\gr_v\left(K\right)$, endowing it with a structure of a graded ring.\\

We will now focus on Puiseux series, which is the central example for our theory. The field of \textbf{Puiseux series} with coefficients in a field $K$ and exponents in an abelian ordered group $\Gamma$ is
$$K\{\{t\}\}=\left\{\sum_{i\in I}\alpha_i t^i\middle|\alpha_i\in K, I\subseteq \Gamma\text{ is well-ordered}\right\}$$
The resulting set, equipped with the natural operations, is a field; in addition, if $K$ is algebraically closed and $\Gamma$ is divisible, then $K\{\{t\}\}$ is also algebraically closed.\\

Assuming $\Gamma$ is also totally ordered, one may define a valuation on the field of Puiseux series $v:K\{\{t\}\}\to\Gamma\cup\left\{\infty\right\}$ as follows: $v\left(0\right)=\infty$, and
$$v\left(\sum_{i\in I}\alpha_i t^i\right)=\min\left\{i\in I\middle|\alpha_i\neq 0\right\}$$

Let us examine the associated graded ring with respect to this valuation. For each $\gamma\in\Gamma$, we first claim that $D_\gamma\cong K$. Indeed, the kernel of the homomorphism $f:D_{\ge\gamma}\to K$ defined by
$$f\left(\sum_{\gamma\leq i\in I}\alpha_i t^i\right)=\alpha_\gamma$$
is precisely $D_{>\gamma}$ (since $D_{>\gamma}$ is the subgroup of $D_{\ge\gamma}$ of Puiseux series whose minimal degree is bigger than $\gamma$).

\subsection{ELT Algebras and Puiseux Series}

We now consider the classical max-plus algebra $\RR_{\max}$, and suppose that $\mathbb{F}$ is some algebraically closed field. We denote by $\mathbb{K}$ the field of Puiseux series with coefficients from $\mathbb{F}$ and exponents from $\RR$, which is algebraically closed. Given a Puiseux series $x\in\mathbb{K}$, we define its \textbf{tropicalization} by
$$\Trop\left(x\right)=-v\left(x\right)$$
where $v$ is the valuation we defined on $\mathbb{K}$. This defines a function $\Trop:\mathbb{K}\to\overline{\RR_{\max}}$ (where $\overline{\RR_{\max}}=\RR_{\max}\cup\left\{-\infty\right\}$). The tropicalization function satisfies the following properties:
\begin{enumerate}
  \item $\Trop\left(x\right)\oplus\Trop\left(y\right)\ge\Trop\left(x+y\right)$, and there is equality if $v\left(x\right)\neq v\left(y\right)$.
  \item $\Trop\left(x\right)\odot\Trop\left(y\right)=\Trop\left(x\cdot y\right)$.
\end{enumerate}

A well-known result which demonstrates the connection between the max-plus algebra and Puiseux series is the Kapranov Theorem:
\begin{thm}[Kapranov Theorem]\label{thm:Kapranov}
Let $I\vartriangleleft\mathbb{K}\left[\lambda_1,\dots,\lambda_n\right]$ be an ideal of polynomials. Then
$$\Trop\left(V\left(I\right)\right)=V\left(\Trop\left(I\right)\right)$$
where:
\begin{enumerate}
  \item $V\left(I\right)$ is the set of common roots of the polynomials in $I$.
  \item $\Trop\left(X\right)$ is the tropicalization of each element in $X$.
\end{enumerate}
\end{thm}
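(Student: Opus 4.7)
The plan is to prove the two inclusions $\Trop(V(I))\subseteq V(\Trop(I))$ and $V(\Trop(I))\subseteq\Trop(V(I))$ separately. For the first inclusion, which is the easier direction, I would take $x=(x_1,\dots,x_n)\in V(I)$ and any $f=\sum_\alpha c_\alpha\lambda^\alpha\in I$. Since $f(x)=0$, we have $v(f(x))=\infty$, while each nonzero term satisfies $v(c_\alpha x^\alpha)=v(c_\alpha)+\alpha\cdot v(x)$, which is finite whenever the $x_i$ are nonzero. By property (4) of non-Archimedean valuations listed after the definition, the minimum of these term valuations must be attained at least twice; translating through $\Trop=-v$, the maximum in the tropical polynomial $\Trop(f)$ evaluated at $\Trop(x)$ is attained at least twice, which is precisely the tropical vanishing condition. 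Since $f\in I$ was arbitrary, $\Trop(x)\in V(\Trop(I))$. Coordinates $x_i=0$ are handled by the convention $\Trop(0)=-\infty$ and a routine separate check.

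For the converse inclusion, I would proceed by induction on the number of variables $n$. The base case $n=1$ reduces to Newton's theorem for Puiseux series: given $f=\sum_i a_i\lambda^i\in\mathbb{K}[\lambda]$ and $w\in V(\Trop(f))$, the tropical vanishing condition that $\max_i(-v(a_i)+iw)$ is attained at least twice corresponds to $-w$ being a slope of the Newton polygon of $f$, and algebraic closure of $\mathbb{K}$ provides a Puiseux root $x$ of $f$ with $v(x)=-w$, i.e.\ $\Trop(x)=w$. For the inductive step, given $w=(w_1,\dots,w_n)\in V(\Trop(I))$, I would perform a generic $\mathbb{K}$-linear change of variables so that the elimination ideal $I'=I\cap\mathbb{K}[\lambda_1,\dots,\lambda_{n-1}]$ satisfies $V(\Trop(I'))$ equal to the image of $V(\Trop(I))$ under the coordinate projection, and so that $I$ contains a polynomial monic in $\lambda_n$. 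The inductive hypothesis applied to $I'$ yields $(x_1,\dots,x_{n-1})\in V(I')$ with $\Trop(x_i)=w_i$ for $i<n$; substituting these values into a monic-in-$\lambda_n$ generator of $I$ produces a univariate polynomial over $\mathbb{K}$, and the base case delivers $x_n\in\mathbb{K}$ with $\Trop(x_n)=w_n$ and $(x_1,\dots,x_n)\in V(I)$.

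The main obstacle is the projection step: one must verify that for a suitable choice of coordinates, the tropical image of $V(\Trop(I))$ under deleting the last coordinate actually coincides with $V(\Trop(I'))$, and that the specialization in the first $n-1$ coordinates does not cause unexpected cancellations among the terms that achieve the maximum of $\Trop(f)$ at $w$. This is precisely where one invokes a genericity argument using algebraic closure of $\mathbb{K}$ to choose the linear change of variables so that the initial forms behave generically, together with the fact that the leading Puiseux coefficients lie in the residue field $\mathbb{F}$ which is algebraically closed. An alternative and more conceptual route, which the authors themselves allude to by citing the Fundamental Theorem of \cite{DSS}, proves $w\in V(\Trop(I))\Rightarrow w\in\Trop(V(I))$ directly via the theory of initial ideals $\mathrm{in}_w(I)$: one shows that $w\in V(\Trop(I))$ iff $\mathrm{in}_w(I)$ contains no monomial, and then lifts a common root of $\mathrm{in}_w(I)$ back to a Puiseux point of $V(I)$ with tropicalization $w$.
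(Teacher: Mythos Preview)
The paper does not actually prove this theorem. It is stated as a ``well-known result'' immediately before the statement, with no proof given; the analogous Fundamental Theorem (Theorem~\ref{thm:Fundamental}) is likewise stated without proof and attributed to \cite{Maclag2015}. So there is no proof in the paper against which to compare your proposal.

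That said, your sketch is a reasonable outline of the standard argument. The easy inclusion $\Trop(V(I))\subseteq V(\Trop(I))$ is exactly as you describe. For the hard inclusion, the initial-ideal route you mention at the end (show $w\in V(\Trop(I))$ iff $\mathrm{in}_w(I)$ contains no monomial, then lift a zero of $\mathrm{in}_w(I)$ over the residue field to a Puiseux point) is the approach taken in the standard references such as Maclagan--Sturmfels, and is more robust than the inductive projection argument you sketch first. The projection approach has a genuine difficulty you partly acknowledge: controlling the tropicalization of the elimination ideal under a generic coordinate change is itself nontrivial and essentially requires the same Gr\"obner/initial-ideal machinery, so it does not really avoid the deeper input. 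If you were to write this up, the initial-ideal argument is the one to pursue.
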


We will now give a similar concept to the idea of tropicalization for the case of ELT algebras.\\

Considering the ELT algebra $\R=\ELT{\Gamma}{K}$, where $\Gamma$ is a totally ordered group and $K$ is a field, one may note that each $D_\gamma$ for $\gamma\in\Gamma$ corresponds to the ``section'' $\left\{\layer{\gamma}{\ell}\middle|\ell\in K\right\}$. The multiplication in $\gr_v\left(K\{\{t\}\}\right)$ is consistent with the multiplication in $\R$. The addition in $\gr_v\left(K\{\{t\}\}\right)$ is also consistent with the addition in $\R$, unless the summands have a cancellation of the minimal-degrees monomials. This is where the role of the zero-layered elements come into play -- they represent that there was a cancellation in the level of the Puiseux series.\\

Motivated by this connection between ELT algebras and Puiseux series, we further study it, generalizing it to ELT algebras for which the layering set is a ring rather than a field. We note that one can define Puiseux series where the coefficients set is a ring, and the result would be a ring of Puiseux series.\\

Let $\R=\ELT{\F}{\L}$ be an ELT algebra. Define a function $\ELTrop:\L\{\{t\}\}\rightarrow\overline{\R}$ in the following way: if $x\in \L\{\{t\}\}\backslash\left\{0\right\}$ has a leading monomial $\ell t^{a}$, then
$$\ELTrop\left(x\right)=\layer{\left(-a\right)}{\ell}.$$
In addition, $\ELTrop\left(0\right)=-\infty$.

\begin{lem}\label{lem:ELTrop-prop}
The following properties hold:
\begin{enumerate}
\item $\forall x,y\in\L\{\{t\}\}:\ELTrop\left(x\right)+\ELTrop\left(y\right)\vDash\ELTrop\left(x+y\right)$.
\item $\forall\alpha\in\L\,\forall x\in\L\{\{t\}\}:\ELTrop\left(\alpha x\right)=\layer 0{\alpha}\, \ELTrop\left(x\right)$.
\item $\forall x,y\in\L\{\{t\}\}:\ELTrop\left(x\right)\ELTrop\left(y\right)\vDash\ELTrop\left(xy\right)$.
\end{enumerate}
\end{lem}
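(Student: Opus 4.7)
The plan is to proceed by case analysis on the valuations of the leading monomials, with the key observation that the surpass relation $\vDash$ is needed precisely to absorb the zero-layered terms that arise when cancellation occurs in the underlying Puiseux series. In all three parts, the ``generic'' case (no cancellation in leading terms) yields an equality, and the ``cancellation'' case yields a strict surpass where the zero-layered element serves as the witness $z \in \zeroset{\R}$ required by the definition of $\vDash$.

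For part (1), let $x,y \in \L\{\{t\}\}$. If $x = 0$ or $y = 0$ the claim is immediate, so assume both are nonzero with leading monomials $\ell_x t^{a_x}$ and $\ell_y t^{a_y}$ respectively. First I would handle the case $a_x \neq a_y$, say $a_x < a_y$: then $-a_x > -a_y$, the ELT sum $\ELTrop(x) + \ELTrop(y)$ equals $\layer{-a_x}{\ell_x}$, and since no cancellation can occur in $x+y$ the leading monomial of $x+y$ is still $\ell_x t^{a_x}$, giving equality. In the case $a_x = a_y =: a$ the ELT sum is $\layer{-a}{\ell_x +_\L \ell_y}$; if $\ell_x +_\L \ell_y \neq 0$ this coincides with $\ELTrop(x+y)$ directly. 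The only subtle case is $\ell_x +_\L \ell_y = 0$, where I would note that $\ELTrop(x) + \ELTrop(y) = \layer{-a}{0}$ is zero-layered while $\ELTrop(x+y)$ is either $-\infty$ (if $x+y = 0$) or $\layer{-v(x+y)}{\ell'}$ with $-v(x+y) < -a$; in either case, taking $z = \layer{-a}{0} \in \zeroset{\R}$ gives $\ELTrop(x+y) + z = \layer{-a}{0} = \ELTrop(x) + \ELTrop(y)$, which is exactly the surpass relation.

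For parts (2) and (3), I would rely on the same template. For (2), the leading monomial of $\alpha x$ is $(\alpha \cdot_\L \ell_x) t^{a_x}$, so $\ELTrop(\alpha x) = \layer{-a_x}{\alpha \ell_x} = \layer{0}{\alpha} \cdot \layer{-a_x}{\ell_x} = \layer{0}{\alpha} \ELTrop(x)$ by the ELT multiplication rule; the edge cases where $\alpha x = 0$ are handled by the conventions of \sSref{sub:the-element-minf}. For (3), if $x$ or $y$ vanishes the claim follows from the $-\infty$ conventions; otherwise the leading monomial of $xy$ equals $(\ell_x \ell_y) t^{a_x + a_y}$ when $\ell_x \ell_y \neq 0$, yielding the equality $\ELTrop(xy) = \ELTrop(x)\ELTrop(y)$. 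When $\ell_x \ell_y = 0$ (possible because $\L$ is only a ring), the product $\ELTrop(x)\ELTrop(y) = \layer{-(a_x+a_y)}{0}$ is zero-layered, and exactly as in part (1) it serves as its own witness in the surpass relation against $\ELTrop(xy)$, whose tangible value is strictly greater than $a_x + a_y$.

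The only non-routine point is the cancellation subcase in part (1): one must check that the witness $z$ one chooses is actually in $\zeroset{\R}$ and that the resulting sum collapses correctly. This is where \Lref{lem:surpass-minus-infty-means-zero-layer} is useful, since the case $x + y = 0$ (or $xy = 0$) forces $\ELTrop(x+y) = -\infty$, and one needs to recognize that any zero-layered element surpasses $-\infty$. Apart from this bookkeeping, the proof is essentially a direct translation of the definition of $\ELTrop$ into ELT arithmetic.
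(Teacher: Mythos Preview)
Your proposal is correct and follows essentially the same case analysis as the paper's proof: handle the trivial $x=0$ or $y=0$ cases, then split on whether the leading exponents agree and whether the leading coefficients cancel, using the zero-layered element as the witness for $\vDash$ in the cancellation subcase. The only cosmetic difference is that the paper disposes of part~(2) by observing it is the special case of part~(3) with $y$ a constant series, whereas you compute it directly; also note a small slip in your part~(3): when $\ell_x\ell_y=0$ the tangible value of $\ELTrop(xy)$ is $-v(xy)$, which is strictly \emph{less} than $-(a_x+a_y)$ (equivalently, the valuation of $xy$ is strictly greater than $a_x+a_y$), not ``strictly greater than $a_x+a_y$'' as you wrote.
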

\begin{proof}
If $x=0$ or $y=0$, the assertion is clear. Therefore, we may assume that $x,y\neq 0$, and write $\displaystyle{x=\alpha_{i_0} t^{i_0}+\sum_{i_0<i\in I}\alpha_i t^i}$ and $\displaystyle{y=\beta_{j_0} t^{j_0}+\sum_{j_0<j\in J}\beta_j t^j}$ for $\alpha_{i_0},\beta_{j_0}\neq0$.
\begin{enumerate}
\item If $i_0\neq j_0$, without loss of generality $i_0<j_0$. Then
    \begin{eqnarray*}
    \ELTrop\left(x+y\right)&=&\ELTrop\left(\alpha_{i_0} t^{i_0}+\beta_{j_0} t^{j_0}+\sum_{i_0<i\in I}\alpha_i t^i+\sum_{j_0<j\in J}\beta_j t^j\right)=\\
    &=&\layer{\left(-i_0\right)}{\alpha_{i_0}}=\layer{\left(-i_0\right)}{\alpha_{i_0}}+\layer{\left(-j_0\right)}{\beta_{j_0}}= \ELTrop\left(x\right)+\ELTrop\left(y\right)
    \end{eqnarray*}
    If $i_0=j_0$, but $\alpha_{i_0}+\beta_{j_0}\neq 0$, then
    \begin{eqnarray*}
    \ELTrop\left(x+y\right)&=&\ELTrop\left(\alpha_{i_0} t^{i_0}+\beta_{j_0} t^{j_0}+\sum_{i_0<i\in I}\alpha_i t^i+\sum_{j_0<j\in J}\beta_j t^j\right)=\\
    &=&\layer{\left(-i_0\right)}{\alpha_{i_0}+\beta_{j_0}}=\layer{\left(-i_0\right)}{\alpha_{i_0}}+\layer{\left(-j_0\right)}{\beta_{j_0}}= \ELTrop\left(x\right)+\ELTrop\left(y\right)
    \end{eqnarray*}
    We are left in the case where $i_0=j_0$ and $\alpha_{i_0}+\beta_{j_0}=0$. Thus,
    $$\ELTrop\left(x\right)+\ELTrop\left(y\right)=\layer{\left(-i_0\right)}{\alpha_{i_0}}+\layer{\left(-j_0\right)}{\beta_{j_0}}= \layer{\left(-i_0\right)}{0}$$
    Also, in this case, the leading monomials of $x$ and $y$ cancel in $x+y$. Therefore, the leading monomial of $x+y$, $\gamma_{k_0}t^{k_0}$ has degree $k_0>i_0$, and thus
    $$\ELTrop\left(x\right)+\ELTrop\left(y\right)=\layer{\left(-i_0\right)}{0}=\layer{\left(-i_0\right)}{0}+\layer{\left(-k_0\right)}{\gamma_{k_0}} \vDash\layer{\left(-k_0\right)}{\gamma_{k_0}}=\ELTrop\left(x+y\right)$$
\item This is a special case of 3, where $y$ is taken to be a constant Puiseux series.
\item First, we assume that $\alpha_{i_0}\beta_{j_0}\neq 0$. In that case, the leading monomial of $x+y$ is $\alpha_{i_0}\beta_{j_0}t^{i_0+j_0}$, and thus
    $$\ELTrop\left(xy\right)=\layer{\left(-i_0-j_0\right)}{\alpha_{i_0}\beta_{j_0}}= \layer{\left(-i_0\right)}{\alpha_{i_0}}\layer{\left(-j_0\right)}{\beta_{j_0}}=\ELTrop\left(x\right)\ELTrop\left(y\right)$$
    Otherwise, $\alpha_{i_0}\beta_{j_0}=0$. In that case, the leading monomial of $xy$, $\gamma_{k_0}t^{k_0}$ has degree $k_0>i_0+j_0$, and thus
    \begin{eqnarray*}
    \ELTrop\left(x\right)\ELTrop\left(y\right)&=&\layer{\left(-i_0\right)}{\alpha_{i_0}}\layer{\left(-j_0\right)}{\beta_{j_0}}= \layer{\left(-i_0-j_0\right)}{0}=\\
    &=&\layer{\left(-i_0-j_0\right)}{0}+\layer{\left(-k_0\right)}{\gamma_{k_0}}\vDash\layer{\left(-k_0\right)}{\gamma_{k_0}}=\ELTrop\left(xy\right)
    \end{eqnarray*}
\end{enumerate}
\end{proof}

We remark that in the case in which $\R$ is an ELT integral domain, meaning $\L$ is an integral domain, we have $\ELTrop\left(x\right)\ELTrop\left(y\right)=\ELTrop\left(xy\right)$ for all $x,y\in\L\{\{t\}\}$, since the second case in the proof, i.e.\ $\alpha_{i_0}\beta_{j_0}=0$, cannot happen. \\

Let us examine the relation $x\vDash\ELTrop\left(y\right)$ a bit more deeply. If $x=\ELTrop\left(y\right)$, it means that $x$ can be lifted to a Puiseux series which has $x$ as its leading monomial. Otherwise, we have that $x$ is of layer zero, and its tangible value is bigger than the tangible value of $\ELTrop\left(y\right)$; so one may say that $x$ can also be lifted to a Puisuex series with leading coefficient $x$, where we allow it to have a zero coefficient in its leading monomial.\\

As in the tropical case, there is a parallel theorem to Kapranov Theorem:

\begin{thm}[The Fundamental Theorem]\label{thm:Fundamental}
Let $I\vartriangleleft\mathbb{K}\left[\lambda_1,\dots,\lambda_n\right]$ be an ideal of polynomials. Then
$$\ELTrop\left(V\left(I\right)\right)=V\left(\ELTrop\left(I\right)\right)$$
where $\ELTrop\left(X\right)$ is the tropicalization of each element in $X$.
\end{thm}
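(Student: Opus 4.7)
The plan is to establish the two containments of $\ELTrop(V(I)) = V(\ELTrop(I))$ separately, with the reverse inclusion being the substantive argument.

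For the easy inclusion $\ELTrop(V(I)) \subseteq V(\ELTrop(I))$, I would compute directly using \Lref{lem:ELTrop-prop}. Given $x \in V(I)$, set $y = \ELTrop(x)$ and fix $f = \sum_\alpha c_\alpha \lambda^\alpha \in I$. Iterating parts 2 and 3 of \Lref{lem:ELTrop-prop} on each monomial and then part 1 on the resulting sum yields
$$\ELTrop(f)(y) \;=\; \sum_\alpha \ELTrop(c_\alpha)\, y^\alpha \;\vDash\; \ELTrop(f(x)) \;=\; \ELTrop(0) \;=\; -\infty.$$
Then \Lref{lem:surpass-minus-infty-means-zero-layer} forces $\ELTrop(f)(y)$ to have zero layer, which is exactly the ELT root condition; since $f$ was arbitrary, $y \in V(\ELTrop(I))$.

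For the reverse inclusion, my plan is to lift a given $y \in V(\ELTrop(I))$ to a point $x \in V(I)$ with $\ELTrop(x) = y$ in two stages. First I would pass to tangible parts: the projection $\tau(y)$ lies in $V(\Trop(I))$, since the zero-layer ELT condition on each $\ELTrop(f)(y)$ refines the usual tropical ``maximum attained twice'' condition for $\Trop(f)$ at $\tau(y)$. Applying the classical Kapranov Theorem (\Tref{thm:Kapranov}) would then supply a Puiseux point $x^{(0)} \in V(I)$ with $\Trop(x^{(0)}) = \tau(y)$; this achieves the correct valuations but not yet the prescribed leading coefficients $s(y_1), \dots, s(y_n)$.

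The main obstacle, and the technical core of the proof, is the second stage: modifying $x^{(0)}$ so that its leading coefficients agree with $s(y)$. My approach would be a Hensel-style iterative lift guided by the initial ideal of $I$ with respect to the weight $-\tau(y)$. The key identity is that for any Puiseux lift $\hat x$ whose $i$-th coordinate has the form $s(y_i)\, t^{-\tau(y_i)} + (\text{lower order})$, the leading coefficient of $f(\hat x)$ equals the layer $s(\ELTrop(f)(y))$, which vanishes by assumption. Thus the leading-order obstruction is trivially satisfied, and I would proceed inductively: at each subsequent $t$-order the residual equation is a polynomial system over the algebraically closed field $\mathbb{F}$ whose consistency reduces once more to the vanishing of the appropriate initial ideal, continuing to hold by the hypothesis on $y$ (and, at levels where the support becomes degenerate, exploiting the extra freedom granted by zero-layered coordinates of $y$). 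Taking the $t$-adic limit of the successive partial lifts, while verifying that their supports remain jointly well-ordered, would produce the desired $x \in V(I)$ with $\ELTrop(x) = y$. The delicate bookkeeping in this iterative construction --- ensuring both solvability at each order and well-orderedness of the limit support --- is what I would expect to be the main source of technical difficulty.
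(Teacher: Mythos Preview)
The paper does not prove this theorem: immediately after the statement it says ``A proof of this theorem may be found in \cite{Maclag2015}'' and moves on. So there is no in-paper argument to compare against; the result is imported wholesale from Maclagan--Sturmfels.

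On your proposal itself: the forward inclusion via \Lref{lem:ELTrop-prop} and \Lref{lem:surpass-minus-infty-means-zero-layer} is correct. For the reverse inclusion, note first that the statement only makes sense for $y\in(\overline{\R^{\times}})^n$, since the image of $\ELTrop$ avoids nonzero-layer-zero elements; your remark about ``extra freedom granted by zero-layered coordinates of $y$'' is therefore out of place. More substantively, the two-stage plan is redundant: the point $x^{(0)}$ produced by classical Kapranov is never used in the Hensel construction you then describe, which starts afresh from the prescribed leading terms $s(y_i)\,t^{-\tau(y_i)}$. The standard route (and the one in the cited reference) goes directly: the hypothesis $y\in V(\ELTrop(I))$ is exactly the statement that $s(y)\in V(\mathrm{in}_w I)$ for the weight $w=-\tau(y)$, and one lifts this residue-field zero to a zero of $I$ with the specified valuation and leading coefficients.

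The genuine soft spot is your claim that at each subsequent $t$-order the residual system is solvable ``by the hypothesis on $y$''. The ELT root condition controls only the initial order; the higher-order corrections involve the sub-leading coefficients of the $f\in I$, which $\ELTrop(f)$ does not see. What makes the iteration go through is not a repeated appeal to the hypothesis on $y$ but rather the structure of the initial ideal itself (e.g.\ that $\mathrm{in}_w I$ contains no monomial, combined with a flatness/Gr\"obner-degeneration or projection argument). That machinery is precisely the technical content supplied by \cite{Maclag2015}, and your sketch does not reproduce it.
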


A proof of this theorem may be found in \cite{Maclag2015}.

\subsection{Semirings with a Negation Map and ELT Rings}\label{sub:ELT-symmetrized}

Semirings need not have additive inverses to all of the elements. While some of the theory of rings can be copied ``as-is'' to semirings, there are many facts about rings which use the additive inverses of the elements. The idea of negation maps on semirings (sometimes called symmetrized semirings) is to imitate the additive inverse map. Semirings with negation maps are discussed in \cite{Akian1990, Gaubert1992, Gaubert1997, Akian2008, Akian2014, Rowen2016}.

\begin{defn}
Let $R$ be a semiring. A map $(-):R\to R$ is a \textbf{negation map} (or a \textbf{symmetry})
if the following properties hold:
\begin{enumerate}
\item $\forall a,b\in R:(-)\left(a+b\right)=(-)a+(-)b$.
\item $(-)0_R=0_R$.
\item $\forall a,b\in R:(-)\left(a\cdot b\right)=a\cdot\left((-)b\right)=\left((-)a\right)\cdot b$.
\item $\forall a\in R:(-)\left((-)a\right)=a$.
\end{enumerate}
We say that $\left(R,(-)\right)$ is a \textbf{semiring with a negation map} (or a \textbf{symmetrized semiring}). If $(-)$ is clear from the context, we will not mention it.
\end{defn}

We give several examples of semirings with negation maps:
\begin{itemize}
\item A trivial example of a negation map (over any semiring) is $(-)a=a$.
\item If $R$ is a ring, it has a negation map $(-)a=-a$.
\item If $\R$ is an ELT algebra, we have a negation map given by $(-)a=\minus a$.
\end{itemize}

The last example is the central example for our theory, since it shows that any ELT algebra is equipped with a natural negation map. Thus, the theory of semirings with negation maps can be used when dealing with ELT algebras.\\

We now present several notations from this theory:
\begin{itemize}
\item $a+(-)a$ is denoted $a^\circ$.
\item $R^\circ=\left\{a^\circ\middle|a\in R\right\}$.
\item We define two partial orders on $R$:
\begin{itemize}
\item The relation $\symmdash$ defined by
$$a\symmdash b\Leftrightarrow \exists c\in R^\circ:a=b+c$$
\item The relation $\nabla$ defined by
$$a\nabla b\Leftrightarrow a+(-)b\in R^\circ$$
\end{itemize}
\end{itemize}

If $\R$ is an ELT algebra, then some of these notations have already been defined. For example, $a^\circ=\zero a$, $\R^\circ=\zeroset{\R}$ and the relation $\symmdash$ is the relation $\vDash$.

\section{ELT Matrices}
Throughout this section, we fix our ELT algebras to be of the form $\R=\ELT{\mathbb{R}}{\mathbb{F}}$, where $\mathbb{F}$ is an algebraically closed field. We also denote $\mathbb{K}=\mathbb{F}\{\{t\}\}$ the field of Puiseux series with coefficients in~$\mathbb{F}$ and powers in $\RR$.\\

Before delving into the theory, we extend some of our definitions for ELT algebras to ELT matrices.

\begin{defn}
Let $A\in\left(\overline{\R}\right)^{n\times n}$ be an ELT matrix, $A=\left(a_{i,j}\right)$. We say that its \textbf{layer} is
$$s\left(A\right)=\left(s\left(a_{i,j}\right)\right)\in\left(\L\{\{t\}\}\right)^{n\times n}.$$
If $s\left(A\right)=0$, we say that $A$ is of \textbf{layer zero}.\\

We also define a surpassing relation $\vDash$ on $\left(\overline{\R}\right)^{n\times n}$ similarly to the case of ELT algebras:
$$A\vDash B\Longleftrightarrow\exists C\in\left(\overline{\R}\right)^{n\times n},s\left(C\right)=0:A=B+C.$$
As in the case of ELT algebras, $\vDash$ is a partial order relation on $\left(\overline{\R}\right)^{n\times n}$.
\end{defn}

\begin{example}
Given $A=\left(\begin{matrix}\layer{1}{2}&\layer{3}{0}\\\layer{\left(-1\right)}{0}&\layer{\left(-3\right)}{-3}\end{matrix}\right)$, $B=\left(\begin{matrix}\layer{1}{2}&\layer{5}{0}\\-\infty&\layer{\left(-3\right)}{-3}\end{matrix}\right)$ and $C=\left(\begin{matrix}\layer{0}{0}&\layer{3}{0}\\\layer{\left(-2\right)}{0}&\layer{\left(-3\right)}{-3}\end{matrix}\right)$, we have $A\vDash B$ but $A\nvDash C$ (since $\layer{1}{2}\nvDash\layer{0}{0}$).
\end{example}

\subsection{The Exploded-Layered Tropical Determinant and Linear Dependence}

\begin{defn}
Let $\R$ be an ELT algebra, and let $A=(a_{i,j})\in \left(\overline{\R}\right)^{n\times n}$ be an ELT matrix. The \textbf{ELT determinant} of $A$ is
$$\det A=\sum_{\sigma\in S_n} {\layer{0}{\sign(\sigma)}}\cdot a_{1,\sigma(1)}\cdot\dots\cdot a_{n,\sigma(n)}.$$

We call a matrix $A\in \left(\overline{\R}\right)^{n\times n}$ \textbf{singular} if $s\left(\det A\right)=0$.
\end{defn}

\begin{lem}\label{lem:det-of-surpassing}
If $A\vDash B$, then $\det A\vDash\det B$.
\end{lem}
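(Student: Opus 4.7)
The plan is to expand $\det A$ entrywise using the distributive law in the semiring $\overline{\R}$, peel off the $\det B$ contribution, and verify that everything that remains lies in $\zeroset{\overline{\R}}$.

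First, I would use the definition of $\vDash$ to write $A = B + C$ where $C = (c_{i,j}) \in (\overline{\R})^{n \times n}$ satisfies $s(c_{i,j}) = 0$ for every entry. Plugging $a_{i,j} = b_{i,j} + c_{i,j}$ into the ELT determinant formula and distributing each product gives
\begin{equation*}
\det A \;=\; \sum_{\sigma \in S_n} \layer{0}{\sgn(\sigma)} \prod_{i=1}^{n}\bigl(b_{i,\sigma(i)} + c_{i,\sigma(i)}\bigr) \;=\; \sum_{\sigma \in S_n} \layer{0}{\sgn(\sigma)} \sum_{S \subseteq \{1,\dots,n\}} \Bigl(\prod_{i \notin S} b_{i,\sigma(i)}\Bigr)\Bigl(\prod_{i \in S} c_{i,\sigma(i)}\Bigr).
\end{equation*}
Separating the $S = \emptyset$ contribution from the rest, the former sums (over $\sigma$) exactly to $\det B$, so $\det A = \det B + D$ where $D$ collects all the mixed terms with $S \neq \emptyset$.

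Next I would argue that $s(D) = 0$. Each mixed term contains at least one factor $c_{i,\sigma(i)}$ with $s(c_{i,\sigma(i)}) = 0$, and since $s$ is multiplicative on $\overline{\R}$ (either because the product ends up in $\zeroset{\overline{\R}} = \zero \, \overline{\R}$, or because the product is $-\infty$, which also has sorting value $0$ by the convention of \sSref{sub:the-element-minf}), every such summand has sorting value $0$. It remains to observe that the ELT addition preserves the property of lying in $\zeroset{\overline{\R}}$: if $x,y$ both have sorting value $0$ then so does $x + y$, by inspecting the three cases of the addition rule (and noting that $-\infty + z = z$). Therefore the sum $D$ of all mixed terms satisfies $s(D) = 0$.

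Combining the two observations, $\det A = \det B + D$ with $D \in \zeroset{\overline{\R}}$, which is exactly the assertion $\det A \vDash \det B$. The only delicate point is the careful bookkeeping with the distinguished element $-\infty$ and the fact that addition on $\overline{\R}$ keeps sorting value $0$; once that is established, the rest is a purely formal expansion. I do not expect any serious obstacle beyond this case analysis.
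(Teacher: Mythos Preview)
Your proposal is correct and follows essentially the same approach as the paper: write $A=B+C$ with $s(C)=0$, expand the determinant by distributivity, separate off the pure-$B$ term $\det B$, and observe that every remaining summand contains at least one factor from $C$ and hence has layer zero. Your write-up is actually more careful than the paper's in spelling out the subset-indexing of the expansion and the closure of $\zeroset{\overline{\R}}$ under addition and under multiplication by arbitrary elements (including the $-\infty$ corner case), but the underlying argument is the same.
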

\begin{proof}
Write $A=B+C$, where $s\left(C\right)=0$. By expanding the ELT determinant we see that
\begin{eqnarray*}
% \nonumber % Remove numbering (before each equation)
  \det A &=&\sum_{\sigma\in S_n} {\layer{0}{\sign(\sigma)}}\cdot a_{1,\sigma(1)}\cdot\dots\cdot a_{n,\sigma(n)}=\\
   &=&\sum_{\sigma\in S_n}{\layer{0}{\sign(\sigma)}}\cdot\left(b_{1,\sigma(1)}+c_{1,\sigma(1)}\right)\cdot\dots\cdots \left(b_{n,\sigma(n)}+c_{n,\sigma(n)}\right)=\\
   &=&\sum_{\sigma\in S_n} {\layer{0}{\sign(\sigma)}}\cdot b_{1,\sigma(1)}\cdot\dots\cdot b_{n,\sigma(n)}+\textrm{other summands with elements of }C\textrm{ in them}=\\
   &=&\det B+\textrm{other summands with elements of }C\textrm{ in them}.
\end{eqnarray*}
Since $s\left(C\right)=0$, all of its elements are of layer zero. Hence, the summands in the RHS other than $\det B$ are also of layer zero. In conclusion, $\det A\vDash\det B$.
\end{proof}

Recall that $\R^{\times}$ is the subset of $\R$ containing the non-zero layered elements
$$\R^{\times}:=\{\layer{a}{\ell}\in \R|\ell\neq 0_\mathbb{F}\}.$$
and $\overline{\R^{\times}}=\R^{\times}\cup\{-\infty\}$.

\begin{defn}
A set of vectors $S\subseteq \left(\overline{\R}\right)^n$ is called \textbf{linearly dependent} if there exist $v_1,...,v_m\in S$ and
$$a_1,...,a_m\in \R^{\times}$$
such that
$$s\Big(\sum_{i=1}^m a_iv_i\Big)=(0_\mathbb{F},...,0_\mathbb{F}).$$
\end{defn}

In this remainder of this subsection we prove that a matrix is singular if and only if its rows and columns are linearly dependent. Our main theorem, which we will prove later, is:

\begin{thm}\label{minusinftydet}
Consider $A\in\left(\overline{\R}\right)^{n\times n}$. Then the rows of $A$ are linearly dependent, iff the columns of $A$ are linearly dependent, iff $s\left(\det A\right)=0_\mathbb{F}$.
\end{thm}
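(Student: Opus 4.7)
The plan is to reduce both equivalences to the corresponding classical statements over the Puiseux series field $\mathbb{K}$ via the Fundamental Theorem (\Tref{thm:Fundamental}), and then to pass from rows to columns using the symmetry $\det A = \det A^T$.

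First I would show that $s(\det A) = 0_\mathbb{F}$ implies the rows are ELT-linearly dependent. Applying the Fundamental Theorem to the principal ideal $(\det) \vartriangleleft \mathbb{K}[\{X_{ij}\}]$ produces a lift $\tilde A \in \mathbb{K}^{n \times n}$ with $\ELTrop(\tilde A) = A$ and $\det \tilde A = 0$. Picking a nontrivial classical dependence $\sum_i \tilde y_i \tilde R_i = 0$ among the rows $\tilde R_i$ of $\tilde A$ and discarding the zero coefficients yields $\tilde y_{i_1}, \dots, \tilde y_{i_m} \in \mathbb{K}^\times$; set $c_k = \ELTrop(\tilde y_{i_k}) \in \R^\times$. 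Using the integral-domain case of \Lref{lem:ELTrop-prop} (where products tropicalize exactly), componentwise
\[
\sum_{k=1}^m c_k R_{i_k} = \sum_k \ELTrop(\tilde y_{i_k} \tilde R_{i_k}) \vDash \ELTrop\left(\sum_k \tilde y_{i_k} \tilde R_{i_k}\right) = -\infty,
\]
and \Lref{lem:surpass-minus-infty-means-zero-layer} converts this to the layer-zero condition needed for ELT-linear dependence.

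For the other direction, I would start from an ELT-dependence relation $s(\sum_k c_k R_{i_k}) = 0$ with $c_k \in \R^\times$, pad the coefficient vector by $-\infty$ to obtain $y \in \overline{\R}^n$, and apply the Fundamental Theorem to the ideal $I = (f_1, \dots, f_n)$ in $\mathbb{K}[\{X_{ij}\},\{Y_i\}]$, where $f_j = \sum_i Y_i X_{ij}$. The hypothesis says precisely that $(A, y) \in V(\ELTrop(I))$, so there is a lift $(\tilde A, \tilde y) \in V(I)$, giving a $\mathbb{K}$-relation $\sum_i \tilde y_i \tilde R_i = 0$; since $\ELTrop(\tilde y_{i_k}) = c_k \neq -\infty$, the relation is nontrivial, whence $\det \tilde A = 0$. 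The same $\ELTrop$ computation applied to the determinant polynomial, together with \Lref{lem:surpass-minus-infty-means-zero-layer}, then yields $\det A \vDash \ELTrop(\det \tilde A) = -\infty$, i.e., $s(\det A) = 0$.

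Finally, the column equivalence follows from the row equivalence applied to $A^T$, using $\det A = \det A^T$ (a direct consequence of the symmetry of the permutation sum defining the ELT determinant, since $\sgn(\sigma) = \sgn(\sigma^{-1})$). The main obstacle I anticipate is the second direction: I would need to verify that the ELT variety of the enlarged ideal $(f_1, \dots, f_n)$ really is described by the coordinatewise layer-zero condition under the pairing $(A, y)$, and to handle cleanly the padding by $-\infty$ (which corresponds to $0 \in \mathbb{K}$ under lifting and contributes trivially to both the classical and ELT polynomials).
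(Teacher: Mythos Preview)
Your approach has a genuine gap that the paper explicitly addresses. The theorem is stated for $A\in(\overline{\R})^{n\times n}$, so $A$ may contain zero-layered entries $\layer{a}{0}$ with $a\neq-\infty$. Such entries are \emph{not} in the image of $\ELTrop$: the leading coefficient of a nonzero Puiseux series is never $0_{\mathbb{F}}$, so $\ELTrop:\mathbb{K}\to\overline{\R^{\times}}$. Consequently, when $A$ has zero-layered entries there is no $\tilde A\in\mathbb{K}^{n\times n}$ with $\ELTrop(\tilde A)=A$, and your appeal to the Fundamental Theorem in both directions fails outright. The paper handles this with two nontrivial reduction lemmas (\Lref{lem:singular_surpasses_singular} and \Lref{lem:rows_dependent_surpasses_dependent}): given a singular (resp.\ row-dependent) $A$, one constructs a singular (resp.\ row-dependent) $B\in(\overline{\R^{\times}})^{n\times n}$ with $A\vDash B$, and only then lifts $B$. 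The construction in \Lref{lem:singular_surpasses_singular} in particular is delicate---it requires adjusting both tangible values and layers of the zero-layered entries via a continuity argument so that singularity is preserved.

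There is a second, subtler issue in your ``dependent rows $\Rightarrow$ singular'' direction. You apply the Fundamental Theorem to the ideal $I=(f_1,\dots,f_n)$ with $f_j=\sum_i Y_iX_{ij}$, but $V(\ELTrop(I))$ consists of points that are ELT roots of $\ELTrop(f)$ for \emph{every} $f\in I$, not just the generators; you would need the $f_j$ to form a tropical basis, which you have not established. The paper sidesteps this entirely (\Lref{lem:lifting matrix with dependent rows}): it fixes the dependence coefficients $\alpha_i$, tropicalizes the single linear form $g(\lambda)=\sum_i\alpha_i\lambda_i$, and lifts each \emph{column} of $B$ independently as a root of one chosen lift $f$ of $g$. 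Working with a principal ideal and lifting column-by-column is what makes the argument go through.
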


The idea of the proof is to use the Fundamental Theorem. There is a subtle point here: the image of the EL tropicalization is the non-zero layered elements in $\R$ (and $\minf$); therefore, if our matrix contains an element of layer zero (other than $\minf$), it has no lift. We solve this problem by showing that every singular matrix (respectively, a matrix with linearly dependent rows) surpasses a singular matrix (respectively, a matrix with linearly dependent rows) with elements in $\overline{\R^{\times}}$.

\begin{lem}\label{lem:singular_surpasses_singular}
Let $A\in\left(\overline{\R}\right)^{n\times n}$ be a singular ELT matrix. Then there exists a singular ELT matrix $B\in\left(\overline{\R^\times}\right)^{n\times n}$ such that $A\vDash B$.
\end{lem}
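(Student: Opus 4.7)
The plan is to induct on the number of entries of $A$ whose layer is zero but which are not $-\infty$. The base case is trivial: if no such entries exist, then already $A\in(\overline{\R^\times})^{n\times n}$, and we take $B=A$.

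For the inductive step, pick one such entry $a_{i,j}=\layer{a}{0}$ and produce a singular matrix $A'$ that agrees with $A$ outside position $(i,j)$, satisfies $A\vDash A'$, and has $a'_{i,j}\in\overline{\R^\times}$. Applying the induction hypothesis to $A'$ then yields the required $B\in(\overline{\R^\times})^{n\times n}$, and transitivity of $\vDash$ gives $A\vDash B$. A direct inspection of the ELT addition rule shows that the condition $A\vDash A'$ forces $a'_{i,j}$ to be either $-\infty$ or of the form $\layer{a'}{\ell'}$ with $a'<a$ and $\ell'\in\mathbb{F}^{\times}$; this is the only available freedom.

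The main ingredient is the Laplace-style decomposition $\det A=a_{i,j}\,M+N$, in which $M=\sum_{\sigma(i)=j}\layer{0}{\sign(\sigma)}\prod_{k\neq i}a_{k,\sigma(k)}$ and $N$ is the remainder; both are independent of $a_{i,j}$, so $\det A'=a'_{i,j}\,M+N$. I now select $a'_{i,j}$ by case analysis. If $M=-\infty$, then $\det A'=N=\det A$ for any $a'_{i,j}$, so $a'_{i,j}=-\infty$ works. Otherwise $a_{i,j}\,M=\layer{a+\t(M)}{0}$ has layer zero, and whenever $\t(N)\geq a+\t(M)$ the formula $\det A=N$ (or $\det A=\layer{a+\t(M)}{s(N)}$ in the equality case) combined with singularity of $A$ forces $s(N)=0$, so again $a'_{i,j}=-\infty$ produces a singular $A'$. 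In the remaining subcase $\t(N)<a+\t(M)$ (including $N=-\infty$) I split further by $s(M)$: if $s(M)=0$, any $a'_{i,j}=\layer{a'}{\ell'}$ with $\t(N)-\t(M)<a'<a$ and $\ell'\neq 0$ yields $\det A'=\layer{a'+\t(M)}{0}$; if $s(M)\neq 0$ and $s(N)=0$, take $a'_{i,j}=-\infty$; and if both $s(M)$ and $s(N)$ are nonzero, the calibrated choice $a'_{i,j}=\layer{\t(N)-\t(M)}{-s(N)/s(M)}$ makes the leading-tangible contributions in $\det A'$ cancel to give $\det A'=\layer{\t(N)}{0}$.

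The delicate point, and the main obstacle, is precisely this last subcase where both $s(M)$ and $s(N)$ are nonzero. The naive replacement $a'_{i,j}=-\infty$ then destroys the very layer-zero contribution $a_{i,j}\,M$ that was responsible for $A$'s singularity, while any uncoordinated perturbation picks up a nonzero layer from $a'_{i,j}\,M$ because $\ell'\neq 0$. One really has to fine-tune both $\t(a'_{i,j})$ and $s(a'_{i,j})$ simultaneously so that $a'_{i,j}\,M$ cancels $N$ at the leading tangible level. The key inequality $\t(N)-\t(M)<a$, which is exactly what allows the chosen $a'$ to satisfy $a'<a$, comes for free from being in this subcase.
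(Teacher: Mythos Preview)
Your proof is correct and takes a genuinely different route from the paper's.

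The paper works \emph{globally}: it replaces all zero-layered entries at once, splitting into three cases according to how $\det A$ acquires layer zero (\,$\det A=-\infty$; cancellation of two dominant non-zero-layered tracks; a dominant zero-layered track). In the first two cases it simply replaces every zero-layered entry by $-\infty$. In the third case it treats all zero-layered entries as variables $\lambda_1,\dots,\lambda_m$, uses a continuity (intermediate-value) argument on the resulting tropical polynomial to choose the tangible values so that the modified zero-layered tracks meet the non-zero-layered contribution $\beta$ at the same tangible level, and then solves a polynomial equation in the layers $\ell_1,\dots,\ell_m$ (which, after zeroing out all but a few variables, becomes linear) to force the layer of the determinant to vanish.

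Your approach is \emph{local and inductive}: fix one offending entry $a_{i,j}$, isolate its contribution via the cofactor decomposition $\det A=a_{i,j}\,M+N$, and adjust only $a_{i,j}$. Your case analysis on $s(M)$ and $s(N)$ is exactly what is needed; the delicate subcase $s(M)\neq 0$, $s(N)\neq 0$ is handled by the calibrated choice $a'_{i,j}=\layer{\t(N)-\t(M)}{-s(N)/s(M)}$, which is legitimate precisely because $\t(N)-\t(M)<a$ in that subcase.

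What each buys: your argument is more elementary---it avoids the continuity argument altogether and replaces the paper's multivariable polynomial solving by a single division in $\mathbb{F}$. It uses only density of $\F=\mathbb{R}$ (to pick $a'$ in an open interval) rather than the full intermediate-value theorem, and only that $\L=\mathbb{F}$ is a field. The paper's global argument, on the other hand, produces the matrix $B$ in one shot and makes the role of dominant tracks explicit, which fits more visibly with the track-based intuition used elsewhere in the section.
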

\begin{proof}
We denote by $\mathrm{track}_\sigma\left(A\right)$ the track of the permutation $\sigma\in S_n$; that is,
$$\mathrm{track}_\sigma\left(A\right)=a_{1,\sigma\left(1\right)}\dots a_{n,\sigma\left(n\right)}$$
We also denote $A=\left(a_{i,j}\right)$. $A$ is singular, so $s\left(\det A\right)=0$.\\

We divide our proof to three cases:
\begin{enumerate}
  \item $\det A=\minf$.
  \item $\det A$ is achieved from a cancellation of (at least) two dominant non-zero layered tracks in~$A$.
  \item $\det A$ is achieved from a dominant zero-layered track in $A$.
\end{enumerate}
~
\begin{casenv}
\item Assume $\det A=\minf$. This means that the value of each track in $A$ is $\minf$ (since ELT algebras are antirings), and thus each track contains $\minf$ as one of the elements in the product (since we are dealing with an ELT field). Consider the matrix $B$ defined as follows:
    $$\left(B\right)_{i,j}=\left\{\begin{matrix}a_{i,j},&s\left(a_{i,j}\right)\neq 0\\\minf,&s\left(a_{i,j}\right)=0\end{matrix}\right.$$
    We claim that $A\vDash B$. Indeed, we construct a matrix $C\in\left(\overline{\R}\right)^{n\times n}$ as follows:
    $$\left(C\right)_{i,j}=\left\{\begin{matrix}\minf,&s\left(a_{i,j}\right)\neq 0\\a_{i,j},&s\left(a_{i,j}\right)=0\end{matrix}\right.$$
    Then $s\left(C\right)=0$ by its construction, and also
    $$\left(B+C\right)_{i,j}=\left\{\begin{matrix}a_{i,j}+\left(\minf\right),&s\left(a_{i,j}\right)\neq 0\\\left(\minf\right)+a_{i,j},&s\left(a_{i,j}\right)=0\end{matrix}\right.=a_{i,j}$$
    Hence $A=B+C$, and thus $A\vDash B$.\\
    $\det A=\minf$, which means that in every track of $A$ there is an element which equals to~$\minf$. Since $A\vDash B$, each track in $B$ will also contain $\minf$ (since if $\minf\vDash\alpha$, we must have $\alpha=\minf$). So $\det B=\minf$, and thus $B$ is singular.
\item Assume $\det A$ is achieved from a cancellation of (at least) two dominant non-zero layered tracks in $A$. Define the same matrix $B$ as in the first case. Again, $A\vDash B$. Note that we did not affect the values of the dominant track, but we ``erased'' the zero-layered tracks. Since no zero-layered track was essential in the determinant of $A$, we shall have $\det A=\det B$, hence $B$ is also singular.\\
\item We are left with the case where $\det A$ is achieved from a dominant zero-layered track in~$A$. We set $X_1$ to be the set of permutations in $S_n$ whose tracks in $A$ are non-zero layered, that is
    $$X_1=\left\{\pi\in S_n\mid s\left(\mathrm{track}_\pi\left(A\right)\right)\neq 0\right\}$$
    We also set $X_2$ to be the set of permutations in $S_n$ whose tracks are zero-layered and dominate over the non-zero layered tracks, that is
    $$X_2=\left\{\sigma\in S_n\mid s\left(\mathrm{track}_\pi\left(A\right)\right)=0, \forall\pi\in X_1:\t\left(\mathrm{track}_\sigma\left(A\right)\right)>\t\left(\mathrm{track}_\pi\left(A\right)\right)\right\}$$
    By our assumption, $X_2\neq\varnothing$.\\

    Let $a_{i_1,j_1},\dots,a_{i_m,j_m}$ be the zero-layered elements in $A$ different than $\minf$. For the moment, we replace each zero-layered element in $A$ with a variable $\lambda_1,\dots,\lambda_m$. Denote the matrix $A$ after this replacement by $A_{\lambda_1,\dots,\lambda_m}$. Then the track of each $\sigma\in S_n$ is a monomial in the variables $\lambda_1,\dots,\lambda_m$.\\

    We first set the tangible values of $\lambda_1,\dots,\lambda_m$. Let $\beta\in\R$ be the sum of the non-zero layered tracks in $A$. If $s\left(\beta\right)=0$, we may substitute $\lambda_1=\cdots=\lambda_m=\minf$, and we are done. So we assume that $\beta\in\R^{\times}$. Consider the function
    $$f:\left(\minf,\t\left(a_{i_1,j_1}\right)\right]\times\dots\times\left(\minf,\t\left(a_{i_m,j_m}\right)\right]\to\mathbb{R}$$
    given by
    $$f\left(x_1,\dots,x_m\right)=\max_{\sigma\in X_2}\t\left(\mathrm{track}_\sigma\left(A_{\layer{x_1}{1},\dots,\layer{x_m}{1}}\right)\right).$$
    One should note that we consider only the dominant zero-layered tracks in $A$ (which are, by definition, not $\minf$). The value of $f$ does not depend on the values of the layers of $x_1,\dots,x_m$, which are $1$ in the definition of $f$. We also note that by expanding $f$, one may see that $f$ is in fact a tropical polynomial, i.e.\ the maximum between several linear functions in the variables $x_1,\dots,x_m$.\\

    As $\left(x_1,\dots,x_m\right)$ tends to $\left(\minf,\dots,\minf\right)$, $f\left(x_1,\dots,x_m\right)$ tends to~$\minf$, and also
    $$f\left(\t\left(a_{i_1,j_1}\right),\dots,\t\left(a_{i_m,j_m}\right)\right)>\t\left(\beta\right).$$
    Since $f$ is continuous, there is a point $\left(x_1,\dots,x_m\right)\in\left(\minf,\t\left(a_{i_1,j_1}\right)\right)\times\dots\times\left(\minf,\t\left(a_{i_m,j_m}\right)\right)$ such that $f\left(x_1,\dots,x_m\right)=\t\left(\beta\right)$. We write $\lambda_1=\layer{x_1}{\ell_1},\dots,\lambda_m=\layer{x_m}{\ell_m}$, where $\ell_1,\dots,\ell_m$ are variables.\\

    Now, $s\left(\det A_{\lambda_1,\dots,\lambda_m}\right)$ is a non-constant polynomial expression in the variables $\ell_1,\dots,\ell_m$, which we will denote $p\left(\ell_1,\dots,\ell_m\right)$. We want to find a root for this polynomial. We note that every monomial which appears in $p$ is of the form $\beta\ell_{k_1}\dots\ell_{k_s}$.\\

    Take a monomial with a minimal number of variables appearing in it; without loss of generality, we assume it is $\beta\ell_1\dots\ell_k$. We set $\ell_r=0$ for every $k<r\leq m$; this means that $p\left(\ell_1,\dots,\ell_k,0,\dots,0\right)=\beta\ell_1\dots\ell_k+\gamma$. Now it is easy to pick appropriate values for $\ell_1,\dots,\ell_k$ so that $p\left(\ell_1,\dots,\ell_k,0,\dots,0\right)=0$.\\

    We take $B=A_{\layer{x_1}{\ell_1},\dots,\layer{x_k}{\ell_k},\minf,\dots,\minf}$. We remark that although $x_{k+1},\dots,x_m\neq\minf$ by their construction, we chose to set them to $\minf$, so they will not affect the determinant. It is easily seen that $A\vDash B$, and by its construction $B\in\left(\overline{\R^\times}\right)^{n\times n}$. To see why $B$ is singular, one should observe that the dominant tracks in the determinant of $B$ are the non-zero layered tracks (which were essential in~$A$), whose sum is $\layer{b}{\gamma}$, and $\layer{b}{\beta}\layer{x_1}{\ell_1}\dots\layer{x_k}{\ell_k}$, and the sum of these tracks is zero-layered.
\end{casenv}

To conclude, we found a matrix $B\in\left(\overline{\R^\times}\right)^{n\times n}$ such that $B$ is singular and $A\vDash B$, as required.
\end{proof}

\begin{lem}\label{lem:rows_dependent_surpasses_dependent}
Let $A\in \left(\overline{\R}\right)^{n\times n}$ be an ELT matrix with linearly dependent rows. Then there exists an ELT matrix $B\in \left(\overline{\R^\times}\right)^{n\times n}$ such that $A\vDash B$ and the rows of $B$ are linearly dependent.
\end{lem}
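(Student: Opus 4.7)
The plan is to adapt the three-case structure of the proof of \Lref{lem:singular_surpasses_singular} to the row-dependence setting. Fix a witness of linear dependence: rows $v_1,\dots,v_m$ of $A$ and coefficients $a_1,\dots,a_m\in\R^\times$ with $w:=\sum_{i=1}^m a_iv_i$ satisfying $s(w_j)=0$ for every coordinate $j$. For each column $j$ analyze how the layer-zero condition on $w_j$ arises. Call $j$ of \emph{type A} if $w_j\neq-\infty$ and every term $a_i(v_i)_j$ of maximal tangible value at coordinate $j$ has zero layer; this is the analogue of ``the determinant is achieved by a dominant zero-layered track''. Otherwise (and when $w_j\neq-\infty$), $j$ must be ``type B'': some maximal term is non-zero-layered, and the layers of these non-zero-layered maximal terms necessarily sum to $0_{\mathbb{F}}$.

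Case 1 ($w=(-\infty,\dots,-\infty)$): Since $\overline{\R}$ is an antiring and each $a_i\in\R^\times$, every $(v_i)_j=-\infty$. Define $B$ from $A$ by replacing each zero-layered entry with $-\infty$, as in Case 1 of the previous lemma; the verification $A\vDash B$ is identical, and the rows $v_1,\dots,v_m$ of $B$ remain identically $-\infty$, hence linearly dependent.

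Case 2 ($w\neq(-\infty)^n$ but no coordinate is of type A): Again define $B$ by replacing every zero-layered entry of $A$ with $-\infty$. In each type-B column $j$, the non-zero-layered maximal terms $a_i(v_i)_j$ are preserved in $B$ and their layers still sum to $0_{\mathbb{F}}$; any zero-layered maximal entries that were deleted contributed $0$ to the layer sum anyway. Hence the same coefficients $a_1,\dots,a_m$ witness linear dependence of the rows of $B$.

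Case 3 (some $j_0$ is of type A): Mirror Case 3 of \Lref{lem:singular_surpasses_singular}. For each type-A column $j$, let $M_j$ be the set of row indices attaining the maximal tangible value in column $j$ of the combination; by definition $A_{ij}$ is zero-layered and $\neq-\infty$ for each $i\in M_j$. Define $B$ by (i) keeping every non-zero-layered entry of $A$; (ii) replacing every zero-layered entry \emph{not} of the form $(i,j)$ with $j$ type~A and $i\in M_j$ by $-\infty$; (iii) replacing each essential entry $A_{ij}$ ($j$ type~A, $i\in M_j$) by $\layer{x_{ij}}{\ell_{ij}}$, where $x_{ij}<\t(A_{ij})$ and $\ell_{ij}\in\mathbb{F}^\times$ are to be chosen. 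For each type-A column $j$ pick a common target tangible value $T_j^\ast$ slightly below the original maximum $T_j$ (and above, at, or below the sub-maximal non-zero-layered tangible value, according to need) and set $x_{ij}=T_j^\ast-\t(a_i)$ for each $i\in M_j$. Then $x_{ij}<\t(A_{ij})$, so $A_{ij}\vDash\layer{x_{ij}}{\ell_{ij}}$ and thus $A\vDash B$. The layer of the $j$-th coordinate of $\sum_i a_iB_i$ becomes a single nontrivial $\mathbb{F}$-linear equation in the variables $\{\ell_{ij}\}_{i\in M_j}$ (together with a constant coming from surviving sub-maximal non-zero-layered contributions, if any); this equation can be solved in $\mathbb{F}^\times$, with a small subcase split on whether the sub-maximal layer-sum vanishes, exactly as in Case~3 of the previous lemma.

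The main obstacle is Case 3: proving that all type-A columns can be cured simultaneously without destroying each other. The decisive observation is that the perturbation at an essential position $(i,j)$ affects only column $j$ of the linear combination; hence the equations produced for distinct type-A columns live in disjoint sets of unknowns $\{\ell_{ij}\}_{i\in M_j}$ and decouple into independent linear equations over the algebraically closed field $\mathbb{F}$, each of which is solvable with non-zero layers. This column-wise independence is the key difference that makes the row-dependence perturbation argument much cleaner than one might first fear, replacing the single determinantal equation of \Lref{lem:singular_surpasses_singular} by one independent equation per type-A column.
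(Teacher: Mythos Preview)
Your argument is correct, but the paper takes a noticeably simpler route. Both proofs work column-by-column and both split into the analogue of your type-B columns (replace every zero-layered entry by $-\infty$) and your type-A columns (some dominant contribution is zero-layered). The difference is in how a type-A column is repaired. You keep \emph{all} dominant zero-layered entries $\{A_{ij}:i\in M_j\}$ as free variables, pick a common target tangible level $T_j^\ast$, and then solve a linear equation over $\mathbb{F}$ in the layers $\{\ell_{ij}\}$, with a subcase split on $|M_j|$ and on whether the residual non-zero-layered sum $S_j$ has layer zero. The paper instead picks a \emph{single} dominant zero-layered entry $a_{k,j}$, sets every other zero-layered entry in that column to $-\infty$, and defines $b_{k,j}=\layer{0}{-1}\,\alpha_k^{-1}S_j$ (or $-\infty$ if $s(S_j)=0$). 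Since all non-zero-layered terms in column $j$ are strictly sub-dominant in a type-A column, one has $\t(b_{k,j})<\t(a_{k,j})$, so $a_{k,j}\vDash b_{k,j}$; and by construction $\alpha_k b_{k,j}+S_j$ has layer zero. This one-entry fix eliminates the multi-variable equation, the choice of $T_j^\ast$, and the subcase analysis entirely.

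What your approach buys is a closer structural parallel to \Lref{lem:singular_surpasses_singular}, but that parallel is overkill here: the decoupling you observe (each column's equation involves its own disjoint set of unknowns) already shows that the situation is much easier than the determinantal case, and the paper exploits this by noting that a single unknown per column already suffices. One minor point to tighten in your Case~3: when $|M_j|=1$ and $s(S_j)=0$ the homogeneous equation $s(a_i)\ell_{ij}=0$ has no solution in $\mathbb{F}^\times$; you allude to handling this ``exactly as in Case~3 of the previous lemma'', which indeed means setting that entry to $-\infty$, but your recipe (iii) as stated requires $\ell_{ij}\in\mathbb{F}^\times$, so this subcase should be made explicit.
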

\begin{proof}
Write $A=\left(a_{i,j}\right)$, and let $\alpha_1,\dots,\alpha_n\in\overline{\R^{\times}}$, not all are $\minf$, such that $s\left(\displaystyle{\sum_{i=1}^{n}\alpha_i R_i\left(A\right)}\right)=0$. We construct the matrix $B$ as follows: if $\left(\displaystyle{\sum_{i=1}^{n}\alpha_i R_i\left(A\right)}\right)_j$ is of layer, yet the dominant elements in this sum are not zero-layered, we define
$$\left(B\right)_{i,j}=\left\{\begin{matrix}a_{i,j},&s\left(a_{i,j}\right)\neq0\\\minf,&s\left(a_{i,j}\right)=0\end{matrix}\right.$$

Otherwise, there is some zero-layered element $a_{k,j}$ such that $\alpha_k a_{k,j}$ dominates in $\left(\displaystyle{\sum_{i=1}^{n}\alpha_i R_i\left(A\right)}\right)_j$. So we define
$$b_{i,j}=\left(B\right)_{i,j}=\left\{\begin{matrix}a_{i,j},&s\left(a_{i,j}\right)\neq 0\\\minf,&s\left(a_{i,j}\right)=0,i\neq k\\\layer{0}{-1}\,\alpha_k^{-1}\displaystyle{\sum_{\substack{\ell=1\\s\left(a_{i,\ell}\right)\neq 0}}^{n}\alpha_\ell a_{\ell,j}},&i=k\end{matrix}\right.$$
(If the last sum is of layer zero, we define $b_{k,j}=\minf$.)\\

We note that $C_j\left(B\right)\vDash C_j\left(A\right)$. Indeed, for $i\neq k$, $b_{i,j}$ is either $a_{i,j}$ or $-\infty$ if $s\left(a_{i,j}\right)=0$. Hence we are only left with the case when $i=k$. Since $\alpha_k a_{k,j}$ dominates in $\left(\displaystyle{\sum_{i=1}^{n}\alpha_i R_i\left(A\right)}\right)_j$,
$$\t\left(\alpha_k a_{k,j}\right)\ge\t\left(\sum_{\substack{i=1\\i\neq k}}^{n}\alpha_i a_{i,k}\right)>\t\left(\sum_{\substack{i=1\\s\left(a_{i,\ell}\neq 0\right)}}^{n}\alpha_i a_{i,k}\right)$$
where the last inequality holds since the dominant elements in the sum $\t\left(\sum_{\substack{i=1\\i\neq k}}^{n}\alpha_i a_{i,k}\right)$ are not zero-layered. Hence
$$\t\left(a_{k,j}\right)>\t\left(\alpha_k^{-1}\sum_{\substack{i=1\\s\left(a_{i,\ell}\neq 0\right)}}^{n}\alpha_i a_{i,k}\right)=\t\left(b_{k,j}\right).$$
Since $s\left(a_{k,j}\right)=0$, we get that $a_{k,j}\vDash b_{k,j}$.

By its construction, $A\vDash B$ (since its column of $A$ surpasses the appropriate column of $B$); also, $B$ satisfies $s\left(\displaystyle{\sum_{i=1}^{n}\alpha_i R_i\left(B\right)}\right)=0$, as required.
\end{proof}

\begin{lem}\label{lem:lifting matrix with dependent rows}
Let $A\in \left(\overline{\R^\times}\right)^{n\times n}$ be an ELT matrix whose rows are linearly dependent. Then there is a matrix $B\in \mathbb{K}^{n\times n}$ such that $\ELTrop\left[B\right]=A$ and the rows of $B$ are linearly dependent.
\end{lem}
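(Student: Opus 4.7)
The plan is to first form the naive monomial lift of $A$ to $\mathbb{K}^{n\times n}$ and then correct it column by column so that the lift of the given ELT dependence becomes an honest $\mathbb{K}$-linear dependence. Concretely, write each entry $a_{i,j}\neq-\infty$ as $\layer{\tau_{i,j}}{\ell_{i,j}}$ with $\ell_{i,j}\neq 0$ and set $(B_0)_{i,j}=\ell_{i,j}t^{-\tau_{i,j}}$; put $(B_0)_{i,j}=0$ when $a_{i,j}=-\infty$. By hypothesis there exist $\alpha_1,\dots,\alpha_n\in\overline{\R^\times}$, not all $-\infty$, with $s\bigl(\sum_i\alpha_i R_i(A)\bigr)=0$; lift each $\alpha_i$ in the same monomial fashion (sending $-\infty$ to $0$) to obtain $\tilde\alpha_i\in\mathbb{K}$. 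Then $\ELTrop(B_0)=A$ and $\ELTrop(\tilde\alpha_i)=\alpha_i$, but the vector $w:=\sum_i\tilde\alpha_i R_i(B_0)\in\mathbb{K}^n$ is typically non-zero.

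Next, I would examine $w$ column by column. Using \Lref{lem:ELTrop-prop}, $\ELTrop(w_j)\vDash\sum_i\alpha_i a_{i,j}$, and the latter has zero layer by assumption. Therefore for each column $j$ one of two things occurs: either every summand $\tilde\alpha_i(B_0)_{i,j}$ vanishes in $\mathbb{K}$ (so $w_j=0$ and no action is needed), or the dominant (smallest-valuation) monomials among the non-zero summands have coefficients in $\mathbb{F}$ summing to zero. In the second case there are necessarily at least two dominant indices, because every non-zero $\tilde\alpha_i(B_0)_{i,j}$ has a non-zero coefficient (the hypothesis $A\in(\overline{\R^\times})^{n\times n}$ together with $\alpha_i\in\overline{\R^\times}$ rules out zero-layered products), so a solitary dominant summand could never yield a zero-layered leading coefficient.

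For each column $j$ with $w_j\neq 0$, choose any index $i_0(j)$ for which $\tilde\alpha_{i_0(j)}(B_0)_{i_0(j),j}$ attains the minimum valuation and set
$$b_{i_0(j),j}:=(B_0)_{i_0(j),j}-\tilde\alpha_{i_0(j)}^{-1}w_j,$$
leaving every other entry of $B_0$ unchanged to obtain $B$. Since $\val(w_j)>\val\bigl(\tilde\alpha_{i_0(j)}(B_0)_{i_0(j),j}\bigr)$, the correction term has strictly greater valuation than $(B_0)_{i_0(j),j}$ itself, so the leading monomial is preserved and $\ELTrop(b_{i_0(j),j})=a_{i_0(j),j}$. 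By construction $\ELTrop(B)=A$ and $\sum_i\tilde\alpha_i R_i(B)=0$, exhibiting the required $\mathbb{K}$-linear dependence of the rows of $B$. The main subtlety, and the step that genuinely invokes the hypothesis $A\in(\overline{\R^\times})^{n\times n}$, is the guaranteed existence of the dominant index $i_0(j)$ in the second case; allowing entries of layer zero in $A$ or among the $\alpha_i$'s would break this count and hence the correction.
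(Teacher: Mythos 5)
Your proof is correct, and it takes a genuinely different and more elementary route than the paper. The paper's proof is a black-box invocation of the Fundamental Theorem (Theorem \ref{thm:Fundamental}): it forms the linear polynomial $g(\lambda_1,\dots,\lambda_n)=\sum_i\alpha_i\lambda_i$, chooses any $f\in\mathbb{K}[\lambda_1,\dots,\lambda_n]$ with $\ELTrop[f]=g$, observes that each column $C_j(A)$ is an ELT root of $g$, and lifts each column to a genuine root of $f$ in $\mathbb{K}^n$; assembling these columns gives $B$. Your argument instead amounts to a direct, hands-on proof of exactly the special case of the Fundamental Theorem needed here (a single linear form): start from the canonical monomial lift $B_0$, observe that the layer condition forces the minimum-valuation coefficients in each $w_j=\sum_i\tilde\alpha_i(B_0)_{i,j}$ to cancel (using that $A$ and the $\alpha_i$ have non-zero layers and $\mathbb{F}$ is a field, so at least two dominant terms exist), and then absorb the higher-valuation residue $w_j$ into a single dominant entry per column via a correction of strictly larger valuation that leaves $\ELTrop$ unchanged. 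This buys self-containedness: one avoids citing \cite{Maclag2015} for this lemma. The paper's formulation is shorter and delegates the technical work, which is sensible since the Fundamental Theorem is already used elsewhere; but your version has the pedagogical advantage of making explicit why the lift exists, and it more sharply isolates where the hypothesis $A\in(\overline{\R^\times})^{n\times n}$ actually enters (the paper hides this inside Lemma \ref{lem:rows_dependent_surpasses_dependent} and the statement of the Fundamental Theorem, whereas your last paragraph names it directly). One small stylistic remark: it would be cleaner to note explicitly that $w_j$ is a finite sum of monomials and hence lies in $\mathbb{K}$, and that the chosen $i_0(j)$ automatically satisfies $\alpha_{i_0(j)}\neq-\infty$ so $\tilde\alpha_{i_0(j)}^{-1}$ makes sense; both are immediate but worth a word.
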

\begin{proof}
Since the rows of $A$ are linearly dependent, there are scalars $\alpha_1,\dots,\alpha_n\in\overline{\R^\times}$, not all are equal to~$\minf$, such that
$$s\left(\sum_{i=1}^{n}\alpha_i R_i\left(A\right)\right)=\left(0,0,\dots,0\right)$$
Consider the polynomial $g\left(\lambda_1,\dots,\lambda_n\right)=\displaystyle{\sum_{i=1}^{n}\alpha_i\lambda_i}$, and fix some polynomial $f\in \mathbb{K}\left[\lambda_1,\dots,\lambda_n\right]$ such that $\ELTrop\left[f\right]=g$.

We now lift every column separately. Each column of $A$ can be considered as a point in $\left(\overline{R^\times}\right)^n$ which is an ELT root of $g$; hence, by the Fundamental Theorem (\Tref{thm:Fundamental}), each column $C_j\left(A\right)$ has a lift $v_j\in \mathbb{K}^n$ such that $\ELTrop\left[v_j\right]=C_j\left(A\right)$ and $f\left(v_j\right)=0$. Hence, the matrix $B$ whose columns are $v_j$ satisfies $\ELTrop\left[B\right]=A$, and its rows are linearly dependent (by the choice of $f$).
\end{proof}

Before proving the main theorem, we need to remark about EL tropicalization of matrices. Using the function $\ELTrop$ defined in the introduction, one may define a similar function for matrices of Puiseux series, $\ELTrop:\mathbb{K}^{n\times n}\to \left(\overline{\R}\right)^{n\times n}$, by
$$\ELTrop\left[\left(a_{i,j}\right)\right]=\left(\ELTrop\left(a_{i,j}\right)\right)$$
Using \Lref{lem:ELTrop-prop}, one can easily prove that:
\begin{enumerate}
  \item $\ELTrop\left[A\right]+\ELTrop\left[B\right]\vDash\ELTrop\left[A+B\right]$.
  \item $\ELTrop\left(\alpha\right)\ELTrop\left[A\right]\vDash\ELTrop\left[\alpha A\right]$.
  \item $\ELTrop\left[A\right]\cdot\ELTrop\left[B\right]\vDash\ELTrop\left[AB\right]$.
  \item $\det\ELTrop\left[A\right]\vDash\ELTrop\left(\det A\right)$.
\end{enumerate}

We shall now use these facts and some earlier lemmas to prove the main theorem.

\begin{proof}[Proof of \Tref{minusinftydet}]
First, assume that the rows of $A$ are linearly dependent. By \Lref{lem:rows_dependent_surpasses_dependent}, there is a matrix $B\in \left(\overline{\R^{\times}}\right)^{n\times n}$ such that $A\vDash B$ and the rows of $B$ are linearly dependent. By \Lref{lem:lifting matrix with dependent rows}, there exists a lift $\tilde{B}\in \mathbb{K}^{n\times n}$ such that $\ELTrop\left[\tilde{B}\right]=B$ and the rows of $\tilde{B}$ are linearly dependent. But, since we are now working with a matrix over a field, $\det\tilde{B}=0$. Hence
$$\det A\overset{\Lref{lem:det-of-surpassing}}{\vDash}\det B=\det\ELTrop\left[\tilde{B}\right]\vDash\ELTrop\left(\det\tilde{B}\right)=\minf$$
Therefore, by \Lref{lem:surpass-minus-infty-means-zero-layer}, $s\left(\det A\right)=0$, i.e.\ $A$ is singular.\\

Now, assume that $A$ is singular. By \Lref{lem:singular_surpasses_singular}, there is a matrix $B\in \left(\overline{\R^{\times}}\right)^{n\times n}$ such that $A\vDash B$ and $B$ is singular. By the Fundamental Theorem, there exists a lift $\tilde{B}\in \mathbb{K}^{n\times n}$ such that $\ELTrop\left[\tilde{B}\right]=B$ with $\tilde{B}$ being singular, and thus its rows are linearly dependent. If
$$\sum_{j=1}^k \alpha_j R_{i_j}\left(\tilde{B}\right)=0$$
with all $\alpha_j\neq 0$, then
\begin{eqnarray*}
  \sum_{j=1}^k \ELTrop\left[\alpha_j\right] R_{i_j}\left(A\right) &\vDash& \sum_{j=1}^k \ELTrop\left[\alpha_j\right] R_{i_j}\left(B\right)=\sum_{j=1}^k \ELTrop\left[\alpha_j\right]\ELTrop\left(R_{i_j}\left(\tilde{B}\right)\right)\vDash\\
   &\vDash&\sum_{j=1}^k \ELTrop\left(\alpha_j R_{i_j}\left(\tilde{B}\right)\right)\vDash\ELTrop\left(\sum_{j=1}^k\alpha_j R_{i_j}\left(\tilde{B}\right)\right)=\left(\minf,\dots,\minf\right)
\end{eqnarray*}
Hence $s\left(\sum_{j=1}^k \ELTrop\left[\alpha_j\right] R_{i_j}\left(A\right)\right)=\left(0,\dots,0\right)$, i.e.\ the rows of $A$ are linearly dependent.
\end{proof}

\subsection{Invertible Matrices}

For this subsection only, we allow our ELT algebras to be of the form $\ELT{\F}{\L}$ for a totally ordered commutative group $\F$ and a commutative ring (with unit) $\L$.

\begin{defn}
Let $\R$ be an ELT algebra. A matrix $A\in\left(\overline{\R}\right)^{n\times n}$ is said to be \textbf{invertible} if there exists $B\in\left(\overline{\R}\right)^{n\times n}$ such that $AB=BA=I_{n}$.
\end{defn}

We will now try to find all of the left invertible matrices.

\begin{defn}
Let $\R$ be an ELT algebra. A \textbf{generalized permutation matrix} is a matrix of the form
$$\begin{pmatrix}\\
c_{1}\cdot e_{\sigma\left(1\right)} & \cdots & c_{n}\cdot e_{\sigma\left(n\right)}\\
\\
\end{pmatrix}$$
for invertible $c_{i}\in\R$ and $\sigma\in S_{n}$. If each $c_{i}=\one$, we denote that matrix $P_{\sigma}$.
\end{defn}

\begin{rem}
Any generalized permutation matrix can be written as a product of a diagonal matrix with a permutation matrix. Specifically,
$$\begin{pmatrix}\\c_1\cdot e_{\sigma\left(1\right)} & \cdots & c_n\cdot e_{\sigma\left(n\right)}\\\\
\end{pmatrix}=\begin{pmatrix}c_{\sigma\left(1\right)} &  & -\infty\\&\ddots\\-\infty&&c_{\sigma\left(n\right)}\end{pmatrix}P_\sigma$$
\end{rem}

The following theorem is a special case of \cite[Theorem 1]{Dolzan2008}, combined with \cite{Reutenauer1982}.
\begin{thm}\label{thm:invertible-matrices}
If $B\in \left(\overline{\R}\right)^{n\times n}$ is left invertible, then it is a generalized permutation matrix.
\end{thm}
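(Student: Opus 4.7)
The plan is to exploit the antiring structure of $\overline{\R}$ (\sSref{sub:the-element-minf}) to convert the matrix identity $AB=I_n$ into a combinatorial statement about the support patterns of $A$ and $B$. Let $A=(a_{i,j})\in(\overline{\R})^{n\times n}$ satisfy $AB=I_n$. I would first observe that, since $\layer{a_1}{\ell_1}\cdot\layer{a_2}{\ell_2}=\layer{a_1+a_2}{\ell_1\ell_2}$ always lies in $\R$, a product $a_{i,k}b_{k,j}$ in $\overline{\R}$ equals $-\infty$ if and only if $a_{i,k}=-\infty$ or $b_{k,j}=-\infty$. For $i\neq j$, the equation $(AB)_{i,j}=\sum_k a_{i,k}b_{k,j}=-\infty$ combined with the antiring property then forces, for every $k$, that $a_{i,k}=-\infty$ or $b_{k,j}=-\infty$. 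On the diagonal, $(AB)_{j,j}=\layer{0}{1}\neq-\infty$, so the set
$$U_j:=\bigl\{k\mid a_{j,k}\neq-\infty\ \text{and}\ b_{k,j}\neq-\infty\bigr\}$$
is nonempty for each $j$.

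Next I would argue that each $|U_j|=1$. Fix $j$ and $k\in U_j$. Since $a_{j,k}\neq-\infty$, applying the off-diagonal conclusion to $(AB)_{j,j'}$ for $j'\neq j$ forces $b_{k,j'}=-\infty$, so row $k$ of $B$ is supported only at column $j$. Symmetrically, $b_{k,j}\neq-\infty$ forces $a_{i,k}=-\infty$ for all $i\neq j$, so column $k$ of $A$ is supported only at row $j$. Consequently, if $k\in U_{j_1}\cap U_{j_2}$ with $j_1\neq j_2$, row $k$ of $B$ would have support contained in $\{j_1\}\cap\{j_2\}=\varnothing$, contradicting $b_{k,j_1}\neq-\infty$. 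Hence the $U_j$ are pairwise disjoint nonempty subsets of $\{1,\dots,n\}$; being $n$ such sets, each must be a singleton, say $U_j=\{\sigma(j)\}$, and disjointness makes $\sigma\in S_n$.

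It then remains to read off the shape of $B$. Column $j$ of $B$ is $-\infty$ outside row $\sigma(j)$, so, writing $c_j:=b_{\sigma(j),j}\in\R$, the $j$-th column of $B$ equals $c_j\cdot e_{\sigma(j)}$. Reducing $(AB)_{j,j}=\layer{0}{1}$ to the only surviving term $k=\sigma(j)$ yields $a_{j,\sigma(j)}\cdot c_j=\layer{0}{1}$, so each $c_j$ is invertible in $\R$ and $B$ is a generalized permutation matrix.

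The main obstacle is not conceptual but a matter of care with $-\infty$: the antiring cancellation only gives the desired combinatorial conclusion once one knows that a product of two $\R$-elements cannot collapse to $-\infty$, which is ensured by the explicit form of ELT multiplication. After that the proof is a clean support-counting argument in the spirit of the Boolean-matrix proofs in the cited works of Dol\v{z}an and Reutenauer.
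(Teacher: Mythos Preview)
Your argument is correct. It differs markedly from the paper's proof, which is a two-line appeal to external references: first Reutenauer's theorem is invoked to upgrade left invertibility over a commutative semiring to two-sided invertibility, and then Dol\v{z}an's structure theorem for invertible matrices over antirings is quoted to write $B=D\sum_{\sigma}a_\sigma P_\sigma$ with $\sum_\sigma a_\sigma=\one$ and $a_\sigma a_\tau=-\infty$ for $\sigma\neq\tau$; the antiring condition then collapses this sum to a single term. Your proof is instead entirely self-contained: you use only the antiring property of $\overline{\R}$ and the fact that a product of two elements of $\R$ never equals $-\infty$, then run a direct support-counting argument on the entries of $A$ and $B$. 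This has the advantages that it never needs the upgrade to two-sided invertibility and that it requires no outside machinery; the paper's route is shorter on the page but leans entirely on the cited literature.

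One expository point worth tightening: the sentence ``Column $j$ of $B$ is $-\infty$ outside row $\sigma(j)$'' is not quite immediate from what precedes it. What you have actually shown is that for every $k\in U_{j'}$, row $k$ of $B$ is supported only in column $j'$. To conclude the column statement you should note explicitly that, since $\sigma$ is a bijection, every index $k$ lies in some $U_{j'}$, namely $j'=\sigma^{-1}(k)$; hence $b_{k,j}=-\infty$ whenever $j\neq\sigma^{-1}(k)$, i.e.\ whenever $k\neq\sigma(j)$. With that one extra line the argument is complete.
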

\begin{proof}
By \cite{Reutenauer1982}, since $\R$ is commutative, $B$ is invertible (and not only left invertible).

Now, according to \cite[Theorem 1]{Dolzan2008}, there exists an invertible diagonal matrix $D$ and $a_{\sigma}\in\overline{\R}$ such that
$$B=D\sum_{\sigma\in S_{n}}a_{\sigma}P_{\sigma}$$
where ${\displaystyle \sum_{\sigma\in S_{n}}}a_{\sigma}=\one$ and $a_{\sigma}a_{\tau}=-\infty$ if $\sigma\neq\tau$. But that condition yields that only one $a_{\sigma}\neq-\infty$, and thus $a_{\sigma}=\one$, implying $B=D\cdot P_{\sigma}$.
\end{proof}

\subsection{Rank of a matrix}

In this section, we generalize Theorem \ref{minusinftydet} and prove that the ELT row rank of an ELT matrix is equal to the ELT column rank. We recall our assumption that the field of layers $\mathbb{F}$ is algebraically closed, and our notation $\mathbb{K}=\mathbb{F}\{\{t\}\}$ the field of Puiseux series with coefficients in $\mathbb{F}$ and powers in $\mathbb{R}$.\\

\begin{defn}
Let $A\in (\overline{\R})^{m\times n}$ be an ELT matrix. The maximal number of linearly independent rows from $A$, is called the \textbf{ELT row rank} of $A$.\\

Similarly, the \textbf{ELT column rank} of $A$ is the maximal number of linearly independent columns of $A$.\\
\end{defn}

\begin{defn}\label{tropicalrank}
Let $A\in (\overline{\R})^{m\times n}$ be an ELT matrix. The \textbf{ELT submatrix rank} of $A$ is the maximal size of a square nonsingular submatrix of $A$. If no such matrix exists, then the ELT submatrix rank of $A$ is defined to be zero.\\
\end{defn}

We aim to prove that these three definitions of rank coincide (\Tref{rank}).

\subsubsection{Kapranov and Barvinok rank}

In their paper \cite{DSS}, Develin, Santos and Sturmfels review three different definitions of matrix rank: Barvinok, Kapranov and tropical rank. Furthermore, they prove that for any tropical matrix $A$
$$\text{tropical-rank}(A) \leq \Kapranovrank(A)\leq \text{Barvinok-rank}(A),$$
where both of these inequalities can be strict.
We will present analogous definitions for rank admitting the above inequalities.\\

The analog of tropical rank is the ELT submatrix rank we introduced in definition~\ref{tropicalrank}.\\

\begin{defn}\label{kapranov_rank_def}
Let $A\in (\overline{\R^{\times}})^{m\times n}$ be an ELT matrix. The \textbf{ELT Kapranov rank} of $A$ is the minimal rank of any matrix $A(t)\in \mathbb{K}^{m\times n}$ such that $\ELTrop(A(t))=A$.\\
\end{defn}

\begin{defn}
Let $A\in (\overline{\R^{\times}})^{m\times n}$ be an ELT matrix. The \textbf{ELT Barvinok rank} of $A$ is the minimal number $r$ of matrices $A_1,...,A_r$ of submatrix rank 1, such that $A_1+A_2+...+A_r=A$.\\
\end{defn}

Since linear dependence of vectors in $\mathbb{K}^n$ implies linear dependence of their tropicalization, one can easily verify that
$$\ELTsubmatrixrank(A)\leq \ELTkapranovrank(A)\leq \text{ELT-Barvinok-rank}(A).$$

\begin{lem}\label{same_rank_lift}
For every ELT matrix $A\in (\overline{\R^{\times}})^{m\times n}$, there exists a matrix $A(t)\in\mathbb{K}^{m\times n}$ such that $\ELTrop\Big(A(t)\Big)=A$ and
$$\ELTsubmatrixrank(A)=\rank\Big(A(t)\Big).$$
\end{lem}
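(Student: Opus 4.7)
The plan is to produce an explicit lift of rank $r := \ELTsubmatrixrank(A)$ via a Schur-complement construction, after first noting the easy lower bound $\rank(A(t)) \geq r$ for every lift.

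For the lower bound, pick any $r \times r$ nonsingular submatrix $B$ of $A$. In any lift $A(t)$, the corresponding submatrix $\tilde B$ satisfies $\det B \vDash \ELTrop(\det \tilde B)$ by the fourth of the four $\ELTrop$ properties for matrices stated just before this lemma. Since $s(\det B) \neq 0$, the elementary fact that $x \vDash y$ with $s(x) \neq 0$ forces $x = y$ gives $\det B = \ELTrop(\det \tilde B)$, so $\det \tilde B \neq 0$ and hence $\rank(A(t)) \geq r$.

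For a matching lift, permute rows and columns (which leaves both ranks invariant) so that the top-left $r \times r$ block $B$ of $A$ is nonsingular. Lift $B$ to any $\tilde B \in \mathbb{K}^{r \times r}$ (automatically invertible by the previous paragraph). Lift the remaining entries of the first $r$ rows, $\tilde A_{[r],j}$ for $j > r$, and of the first $r$ columns, $\tilde A_{i,[r]}$ for $i > r$, arbitrarily with the prescribed tropicalization, and complete the matrix by the Schur formula
\[
\tilde A_{i,j} \;=\; \tilde A_{i,[r]}\,\tilde B^{-1}\,\tilde A_{[r],j} \qquad (i > r,\; j > r).
\]
The last $m - r$ rows of $\tilde A$ are then linear combinations of the first $r$, so $\rank(A(t)) \leq r$, and combining this with the lower bound gives $\rank(A(t)) = r$.

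The main obstacle is verifying $\ELTrop(\tilde A_{i,j}) = A_{i,j}$ at the completion positions $i, j > r$. For each such pair, the $(r+1) \times (r+1)$ submatrix $A^{(i,j)}$ with rows $[r] \cup \{i\}$ and columns $[r] \cup \{j\}$ is ELT-singular by the maximality of $r$, so cofactor expansion of its ELT determinant along the last row produces
\[
A_{i,j}\det B \;+\; \sum_{k=1}^{r} \layer{0}{(-1)^{r+1+k}}\, A_{i,k}\det N_k,
\]
which must have layer zero; on the Puiseux side $\det \tilde A^{(i,j)} = 0$ by construction yields the parallel Cramer identity. Because $A_{i,j}, \det B \in \overline{\R^{\times}}$, the ``dominant'' regime $\t(A_{i,j}\det B) > \t(\text{tail})$ would contradict the layer-zero condition and is excluded. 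In the ``balanced'' regime $\t(A_{i,j}\det B) = \t(\text{tail})$ with $s(\text{tail}) \neq 0$, matching the leading Puiseux coefficients on the two sides of the Schur identity gives $\ELTrop(\tilde A_{i,j}) = A_{i,j}$ directly. The delicate case is the ``subordinate'' regime $\t(A_{i,j}\det B) < \t(\text{tail})$, where the ELT singularity forces $s(\text{tail}) = 0$ but does not pin down $A_{i,j}$ on its own; here one must exploit the layer-zero condition on the tail to produce the required cancellation at the leading Puiseux order of $\tilde A_{i,[r]}\,\tilde B^{-1}\,\tilde A_{[r],j}$, refining the freely chosen border lifts if necessary --- or, more cleanly, invoking the Fundamental Theorem on the linear system $\{\det \tilde A^{(i,j)} = 0\}_{j > r}$ in the unknown entries of row $i$ (whose ELT tropicalization is satisfied by $A_{i,[n]}$) to obtain a simultaneous lift of the full $i$-th row with the prescribed tropicalization.
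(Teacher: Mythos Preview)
Your lower-bound argument is correct and matches the paper's. The gap is in the upper bound.

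The Schur-complement construction is a natural idea, but the ``subordinate'' case you flag is a genuine obstruction, and neither of your proposed fixes closes it. The suggestion to ``refine the border lifts if necessary'' is not a proof. The Fundamental-Theorem suggestion fails because applying that theorem to the ideal generated by $\{\det \tilde A^{(i,j)}\}_{j>r}$ (as linear forms in the entries of row $i$, with the first $r$ rows already lifted) requires $A_{i,[n]}$ to be an ELT root of the tropicalization of \emph{every} element of that ideal, not merely of the chosen generators. Your $n-r$ linear forms are a basis of the ideal but are not shown to be a tropical basis; for linear ideals the tropical basis consists of the circuits of the underlying matroid, and the fundamental circuits with respect to $[r]$ that you have written down do not in general exhaust them. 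A secondary problem is that the coefficients of your linear forms are $r\times r$ minors of the \emph{specific} lift of the first $r$ rows, and for an arbitrary lift these can suffer cancellation, so their ELT tropicalization need not equal the corresponding ELT minors of $A$; thus even the claim that $A_{i,[n]}$ satisfies the tropicalized generators is not established.

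The paper avoids all of this by working globally rather than row by row. It treats every entry of $A$ as an unknown and applies the Fundamental Theorem directly to the determinantal ideal $I_{r+1}\subseteq\mathbb{K}[x_{i,j}]$: every $(r{+}1)\times(r{+}1)$ minor of $A$ is ELT-singular by hypothesis, and since the $(r{+}1)$-minors form a Gr\"obner basis of $I_{r+1}$ (the paper cites Develin--Santos--Sturmfels for this), $A$ lies in $V(\ELTrop(I_{r+1}))$. The Fundamental Theorem then yields a single lift $A(t)\in V(I_{r+1})$, which automatically has $\rank A(t)\le r$. The Gr\"obner-basis (hence tropical-basis) property of the full set of minors is exactly the structural fact your row-by-row reduction cannot supply.
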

\begin{proof}

Write $\ELTsubmatrixrank(A)=r$. There exists a submatrix of $A$ of size $r\times r$ that is nonsingular, and every larger submatrix is singular.\\

Let $G_r$ be the set of generators of the classical determinantal ideal of size $r$, over $\mathbb{K}^{m\times n}$. For every polynomial $g\in G_{r+1}$, the matrix $A$ is an ELT root of $\ELTrop[g]$.\\

Since $G_{r+1}$ is a Gr\"obner basis of the determinantal ideal $I_{r+1}$ (ref. \cite{DSS}),  $A$ is an ELT root of $\ELTrop[f]$ for every $f\in I_{r+1}$. By the fundamental theorem (ref. \cite{Maclag2015}), there exists a matrix $A(t)\in V(I_{r+1})\subseteq \mathbb{K}^{m\times n}$ such that $\ELTrop(A(t))=A$.\\

Now $rank\Big(A(t)\Big)\leq r$, since $A(t)\in V(I_{r+1})$. Also
$$\rank\Big(A(t)\Big)\geq \ELTsubmatrixrank(\ELTrop[A(t)])=\ELTsubmatrixrank(A)=r.$$
Together,
$$\rank\Big(A(t)\Big)=\ELTsubmatrixrank(A).$$
\end{proof}

\begin{thm}\label{eltkapranov}
For any ELT matrix $A\in (\overline{\R^{\times}})^{m\times n}$
$$\ELTsubmatrixrank(A)= \ELTkapranovrank(A)$$
\end{thm}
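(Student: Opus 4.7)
The plan is to observe that one direction of the desired equality, namely $\ELTsubmatrixrank(A) \leq \ELTkapranovrank(A)$, was already noted in the paragraph immediately preceding \Lref{same_rank_lift}: any linear dependence among the columns of a lift $A(t) \in \mathbb{K}^{m \times n}$ of $A$ descends to an ELT linear dependence among the columns of $A$, so any $(r{+}1) \times (r{+}1)$ submatrix of $A$ whose tangible rank is too large would force the lift to have rank larger than $r$ as well. Thus the whole content of the theorem is the reverse inequality $\ELTkapranovrank(A) \leq \ELTsubmatrixrank(A)$.

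For this reverse direction I would simply invoke \Lref{same_rank_lift}: it furnishes a specific matrix $A(t) \in \mathbb{K}^{m \times n}$ with $\ELTrop(A(t)) = A$ and $\rank(A(t)) = \ELTsubmatrixrank(A)$. Since $\ELTkapranovrank(A)$ is by \Dref{kapranov_rank_def} the minimum of $\rank(A(t))$ over all such lifts, we immediately get
$$\ELTkapranovrank(A) \leq \rank(A(t)) = \ELTsubmatrixrank(A).$$
Combining with the opposite inequality from the preamble yields the equality.

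In this sense the theorem is a nearly-formal consequence of \Lref{same_rank_lift}, and essentially no new work is required at this stage; the whole substance of the result is packed into that lemma, whose proof in turn depends on the Gröbner basis property of the classical determinantal ideal and the Fundamental Theorem (\Tref{thm:Fundamental}). If I had to identify a subtlety to mention, it would be ensuring that the lift produced by \Lref{same_rank_lift} really lies in $\mathbb{K}^{m \times n}$ with the correct tropicalization (which requires $A \in (\overline{\R^\times})^{m \times n}$ so that each entry has a well-defined lift), but this hypothesis is already part of the statement of the theorem.
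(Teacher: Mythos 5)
Your proof is correct and takes essentially the same approach as the paper: both reduce the only non-trivial inequality $\ELTkapranovrank(A)\leq\ELTsubmatrixrank(A)$ to a direct invocation of \Lref{same_rank_lift} together with the definition of the ELT Kapranov rank as a minimum over lifts.
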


\begin{proof}
We need to prove the inequality
$$\ELTsubmatrixrank(A)\geq \ELTkapranovrank(A).$$

By \Lref{same_rank_lift} there exist a matrix $A(t)$ such that $\ELTrop\Big(A(t)\Big)=A$ and
$$\ELTsubmatrixrank(A)=\rank\Big(A(t)\Big).$$

By Definition \ref{kapranov_rank_def},
$$\rank\Big(A(t)\Big)\geq \ELTkapranovrank(A),$$
and thus
$$\ELTsubmatrixrank(A)\geq \ELTkapranovrank(A).$$
\end{proof}

We are now ready to prove the rank theorem.

\begin{thm}[Rank Theorem]\label{rank}
Let $A\in (\overline{\R})^{m\times n}$ be an ELT matrix. Then the ELT row rank of $A$ is equal to the ELT column rank of $A$ and to the ELT submatrix rank of $A$.
\end{thm}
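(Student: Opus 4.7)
The plan is to establish the chain $r_s \leq r_r$, $r_s \leq r_c$, and $r_r, r_c \leq r_s$, where $r_r, r_c, r_s$ denote the ELT row, column, and submatrix ranks of $A$. The bound $r_c \leq r_s$ will follow from $r_r \leq r_s$ applied to $A^T$, since a straightforward index change shows $\det M^T = \det M$ for any square ELT matrix $M$, and hence $\ELTsubmatrixrank$ is transpose-invariant.

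For $r_s \leq r_r$ (and symmetrically $r_s \leq r_c$), I would pick a nonsingular $r_s \times r_s$ submatrix $A_S$ of $A$. By \Tref{minusinftydet} the rows of $A_S$ are ELT-linearly independent, and any ELT-linear dependence of the corresponding $r_s$ rows of $A$, restricted to the columns of $A_S$, would produce a dependence for the rows of $A_S$ -- contradiction.

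For the hard direction $r_r \leq r_s$, suppose by contradiction that $k := r_s+1$ rows of $A$ are ELT-linearly independent, and let $A'$ be the corresponding $k \times n$ submatrix. Every $k \times k$ submatrix of $A'$ is singular, since $\ELTsubmatrixrank(A) = r_s < k$. The plan is to derive a contradiction by producing an ELT-linear dependence among the rows of $A'$. When $A' \in (\overline{\R^\times})^{k \times n}$, \Tref{eltkapranov} combined with \Lref{same_rank_lift} yields a lift $A'(t) \in \mathbb{K}^{k \times n}$ of classical rank at most $r_s < k$; its $k$ rows are linearly dependent over $\mathbb{K}$, and applying $\ELTrop$ row-wise (using the surpassing identities following \Lref{lem:ELTrop-prop}, exactly as at the end of the proof of \Tref{minusinftydet}) yields a combination $\sum_j \ELTrop(\alpha_j) R_j(A') \vDash -\infty$, which by \Lref{lem:surpass-minus-infty-means-zero-layer} is zero-layered -- contradicting independence.

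The remaining case, in which $A'$ has zero-layered entries, will be handled by constructing a matrix $B \in (\overline{\R^\times})^{k \times n}$ with $A' \vDash B$ for which every $k \times k$ submatrix is still singular, and then invoking the previous paragraph applied to $B$. The resulting zero-layered combination $\sum_j \beta_j R_j(B)$ transfers back to $A'$: writing $R_j(A') = R_j(B) + Z_j$ with $s(Z_j) = 0$, the sum $\sum_j \beta_j R_j(A') = \sum_j \beta_j R_j(B) + \sum_j \beta_j Z_j$ is the sum of two zero-layered vectors, hence zero-layered, contradicting independence. The \emph{main obstacle} is the construction of $B$: it extends the third case of \Lref{lem:singular_surpasses_singular} by demanding the simultaneous zero-layering of every $k \times k$ minor. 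Concretely, the plan is to replace each zero-layered entry of $A'$ by a variable $\layer{x_r}{\ell_r}$, fix the tangible values $x_r$ by a degree-counting argument so that the zero-layered tracks of each $k \times k$ minor match in tangible value its dominant non-zero-layered tracks, and then use the algebraic closure of $\mathbb{F}$ to pick layer values $\ell_r$ that zero out all $\binom{n}{k}$ minor-layers simultaneously -- reducing the matter to the solvability of a polynomial system in the $\ell_r$ over the algebraically closed field $\mathbb{F}$.
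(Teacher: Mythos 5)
Your core argument coincides with the paper's. Both directions use the same two tools: $r_s \leq r_r$ follows from \Tref{minusinftydet} applied to square submatrices (you phrase it via a chosen nonsingular $r_s\times r_s$ submatrix, the paper phrases it contrapositively through singularity of every $(k_r+1)\times(k_r+1)$ submatrix --- identical content), and $r_r \leq r_s$ follows by lifting to Puiseux series via \Lref{same_rank_lift}, where the resulting linear dependence over $\mathbb{K}$ descends through $\ELTrop$ and the surpassing identities to an ELT dependence. The transpose reduction for the column rank is also the paper's.

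Where you diverge is in attempting to extend the lifting argument to matrices with zero-layered entries. The paper does not attempt this; its proof closes with the explicit caveat ``This proves that for every ELT matrix $A$ without zero-layered elements, its ELT row rank equals to its ELT submatrix rank,'' and its use of $\ELTkapranovrank$ tacitly assumes $A\in\left(\overline{\R^{\times}}\right)^{m\times n}$. You are right to see this as a gap in the theorem as stated, but your proposed fix is a plan, not a proof, and the crucial step would not go through as described. Producing $B\in\left(\overline{\R^{\times}}\right)^{k\times n}$ with $A'\vDash B$ such that \emph{every} $k\times k$ minor of $B$ is singular is a genuine strengthening of \Lref{lem:singular_surpasses_singular}: the intermediate-value adjustment of the tangible values $x_r$ in that lemma is tuned to a single determinant, and there is no reason a single choice of $(x_1,\dots,x_m)$ can simultaneously equalize the dominant zero-layered and non-zero-layered tracks in all $\binom{n}{k}\binom{k}{k}$ minors, whose constraints on the $x_r$ generally conflict. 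Likewise, solvability of the resulting system of layer conditions over $\mathbb{F}$ is not automatic --- algebraic closure gives roots of a single polynomial, not of an arbitrary system; you would additionally have to show the ideal generated by the $\binom{n}{k}$ layer-polynomials is proper. Absent those arguments this case is open; everything else in the proposal is sound and tracks the paper.
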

\begin{proof}
For convenience, write $k_r$ for the ELT row rank of $A$, and
$$k_s=\ELTsubmatrixrank(A)=\ELTkapranovrank(A).$$
We will prove that $k_r\ge k_s$ and $k_r\leq k_s$, thus proving the desired equality.\\

First we prove that $k_r\ge k_s$. If $k_r=m$, we are done, since $k_s\leq m$. So we assume that $k_r<m$. By the definition of $k_r$, every $k_r+1$ rows of $A$ are linearly dependent. Thus, by \Tref{minusinftydet}, every $\left(k_r+1\right)\times\left(k_r+1\right)$ submatrix of $A$ is singular. Therefore, by the definition of $k_s$, $k_r\ge k_s$.\\

Now we prove that $k_r\leq k_s$. Since $k_s=\ELTkapranovrank(A)$, there exists a lift $\tilde{A}\in \mathbb{K}^{m\times n}$ such that $\rank\left(\tilde{A}\right)=k_s$. Thus, every $k_s+1$ rows of $\tilde{A}$ are linearly dependent. Since linear dependence of a lift implies linear dependence in the ELT context, we get that every $k_s+1$ rows of $A$ are linearly dependent, as desired.\\

This proves that for every ELT matrix $A$ without zero-layered elements, its ELT row rank equals to its ELT submatrix rank. Since $\ELTsubmatrixrank(A)=\ELTsubmatrixrank(A^t)$, they are also equal to the ELT column rank of $A$.
\end{proof}

As an immediate corollary, we get the following:

\begin{cor}\label{cor:n+1 vectors are dependent}
Any $n+1$ vectors in $\left(\overline{\R}\right)^n$ are linearly dependent.
\end{cor}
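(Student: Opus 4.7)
The plan is to read the corollary off directly from the Rank Theorem (\Tref{rank}) by packaging the $n+1$ vectors as rows of a matrix whose column count trivially caps its rank.

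Concretely, given vectors $v_1,\dots,v_{n+1}\in(\overline{\R})^n$, I would form the $(n+1)\times n$ ELT matrix $A$ whose $i$-th row is $v_i$. The ELT column rank of $A$ is, by definition, the maximal number of linearly independent columns drawn from $A$; since $A$ has only $n$ columns, this number is at most $n$. Invoking \Tref{rank}, the ELT row rank of $A$ equals its ELT column rank, hence is also at most $n$. But the row rank is by definition the maximal size of a linearly independent collection of rows, so the full set of $n+1$ rows $v_1,\dots,v_{n+1}$ cannot be linearly independent; that is, they are linearly dependent.

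There is essentially no obstacle once \Tref{rank} is in hand; the only minor point to be careful about is that \Tref{rank} is stated for matrices in $(\overline{\R})^{m\times n}$ (allowing rectangular shapes and zero-layered entries), which is exactly the generality we need here. If one prefers, the same argument can be phrased via submatrix rank: every $(n+1)\times(n+1)$ square submatrix one could extract from $A$ does not exist (there are not enough columns), so the ELT submatrix rank is at most $n$, and by \Tref{rank} the row rank is at most $n$ as well, giving the desired dependence of $v_1,\dots,v_{n+1}$.
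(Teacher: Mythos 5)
Your proof is correct and is essentially the paper's argument up to transpose: you place $v_1,\dots,v_{n+1}$ as the rows of an $(n+1)\times n$ matrix and bound its column rank by $n$, whereas the paper places them as the columns of an $n\times(n+1)$ matrix and bounds its row rank by $n$; in both cases the Rank Theorem (\Tref{rank}) then forces the other rank to be at most $n$, so the $n+1$ vectors cannot be independent. The alternative phrasing via submatrix rank is equally valid and interchangeable, since \Tref{rank} identifies all three ranks.
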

\begin{proof}
Let $v_1,\dots,v_{n + 1}\in\left(\overline{\R}\right)^n$ be vectors. Consider the ELT matrix $A\in \left(\overline{\R}\right)^{n\times\left(n+1\right)}$ whose columns are $v_i$. Since its row rank equals to its column rank, and its row rank is bounded by $n$, we get that the column rank of $A$ is at most $n$; hence, its columns are linearly dependent. So $v_1,\dots,v_{n+1}$ are linearly dependent.
\end{proof}

\subsubsection{The ELT Rank of a Tropical Matrix}

Next, we define the ELT rank of a tropical matrix, i.e.\ a matrix over the tropical semifield $\mathbb{T}=\overline{\mathbb{R}_{\max}}$.\\

\begin{defn}
Let $A\in\mathbb{T}^{m\times n}$ be a tropical matrix. The \textbf{ELT rank} of $A$ is the minimal ELT submatrix rank of any matrix $E_A\in \big(\overline{\R^{\times}}\big)^{m\times n}$ such that $\tau(E_A)=A$ (in other words, $E_A$ is obtained by assigning layers to the entries of $A$).\\
\end{defn}

\begin{example}
Consider $A\in\mathbb{T}^{m\times n}$,
$$A=\begin{pmatrix}0 & -\infty \\ -\infty & 0 \end{pmatrix}.$$
Any matrix $E_A\in \big(\overline{\R^{\times}}\big)^{m\times n}$ such that $\tau(E_A)=A$ is of the form
$$E_A=\begin{pmatrix}\layer{0}{x_1} & -\infty \\ -\infty & \layer{0}{x_2} \end{pmatrix},$$
where $x_1,x_2\neq 0$.\\

Now for every choice of $E_A$
$$\ELTsubmatrixrank(E_A)=2$$
since $s\Big(\det E_A\Big)=x_1x_2\neq 0$. Therefore
$$\ELTrank(A)=2.$$

\end{example}

\begin{example}
Consider $A\in\mathbb{T}^{m\times n}$,
$$A=\begin{pmatrix}0 & 0 \\ 0 & 0 \end{pmatrix}.$$
Any matrix $E_A\in \big(\overline{\R^{\times}}\big)^{m\times n}$ such that $\tau(E_A)=A$ is of the form
$$E_A=\begin{pmatrix}\layer{0}{x_1} & \layer{0}{x_2} \\ \layer{0}{x_3} & \layer{0}{x_4} \end{pmatrix},$$
where $x_1,x_2,x_3,x_4\neq 0$.\\

Now for every choice of $E_A$, any submatrix of size $1\times 1$ is nonsingular; therefore
$$\ELTsubmatrixrank(E_A)\geq 1.$$
Choosing $x_1=x_2=x_3=x_4=1$ we obtain $E_A$ which is singular; and therefore
$$\ELTsubmatrixrank(E_A)= 1.$$

Together
$$\ELTrank(A)=1.$$

\end{example}

\begin{prop}\label{eltkapranovlem}
For any tropical matrix $A$
$$\ELTrank(A)= \Kapranovrank(A).$$
\end{prop}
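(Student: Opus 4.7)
The plan is to reduce the statement to the already-established equality $\ELTsubmatrixrank = \ELTkapranovrank$ (\Tref{eltkapranov}), by observing that $\tau \circ \ELTrop = \Trop$, so a Puiseux lift of the tropical matrix $A$ is the same data as a Puiseux lift of some ELT refinement $E_A$ with $\tau(E_A) = A$. Concretely, if $\layer{a}{\ell}$ has tangible value $a$, and $\ELTrop(x) = \layer{-v(x)}{\ell}$ where $\ell$ is the leading coefficient of $x$, then applying $\tau$ strips the layer and returns $-v(x) = \Trop(x)$. Note also that the leading coefficient of a nonzero Puiseux series is automatically nonzero, so $\ELTrop$ of any matrix in $\mathbb{K}^{m \times n}$ lands in $(\overline{\R^\times})^{m\times n}$.

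For the inequality $\ELTrank(A) \leq \Kapranovrank(A)$, I would take a matrix $A(t) \in \mathbb{K}^{m\times n}$ realizing the Kapranov rank of $A$, i.e.\ $\Trop(A(t)) = A$ and $\rank(A(t)) = \Kapranovrank(A)$. Set $E_A := \ELTrop[A(t)]$. By the observation above, $E_A \in (\overline{\R^\times})^{m\times n}$ and $\tau(E_A) = A$, so $E_A$ is a valid candidate in the definition of $\ELTrank(A)$. Moreover $A(t)$ itself is a lift of $E_A$ witnessing $\ELTkapranovrank(E_A) \leq \rank(A(t))$. Applying \Tref{eltkapranov},
$$\ELTrank(A) \leq \ELTsubmatrixrank(E_A) = \ELTkapranovrank(E_A) \leq \rank(A(t)) = \Kapranovrank(A).$$

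For the reverse inequality $\ELTrank(A) \geq \Kapranovrank(A)$, I would fix an arbitrary $E_A \in (\overline{\R^\times})^{m\times n}$ with $\tau(E_A) = A$. By \Tref{eltkapranov} there is a lift $\tilde E_A \in \mathbb{K}^{m\times n}$ with $\ELTrop[\tilde E_A] = E_A$ and $\rank(\tilde E_A) = \ELTsubmatrixrank(E_A)$. Then $\Trop(\tilde E_A) = \tau(\ELTrop[\tilde E_A]) = \tau(E_A) = A$, so $\tilde E_A$ is a classical Kapranov lift of $A$ and hence $\Kapranovrank(A) \leq \rank(\tilde E_A) = \ELTsubmatrixrank(E_A)$. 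Taking the infimum over all admissible $E_A$ gives $\Kapranovrank(A) \leq \ELTrank(A)$.

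There is no real obstacle here; the only point that needs care is the compatibility $\tau \circ \ELTrop = \Trop$ (together with the fact that $\ELTrop$ never produces a zero layer on a nonzero series), which ensures the bijection between ``ELT lifts of some $E_A$ with $\tau(E_A) = A$'' and ``tropical lifts of $A$.'' Once this bookkeeping is made explicit, both inequalities are immediate applications of \Tref{eltkapranov}.
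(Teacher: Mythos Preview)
Your proof is correct and follows essentially the same approach as the paper's: both directions hinge on the identity $\tau\circ\ELTrop=\Trop$, so that Puiseux lifts of $A$ correspond bijectively to Puiseux lifts of ELT refinements $E_A$ with $\tau(E_A)=A$. The only cosmetic difference is that the paper cites \Lref{same_rank_lift} directly to produce a lift of $E_A$ with rank equal to $\ELTsubmatrixrank(E_A)$, whereas you route through \Tref{eltkapranov} and the definition of ELT Kapranov rank; these are equivalent since \Tref{eltkapranov} is itself proved via \Lref{same_rank_lift}.
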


\begin{proof}
Let $E_A\in \big(\overline{\R^{\times}}\big)^{m\times n}$ be a matrix such that $A=\tau(E_A)$. By \Lref{same_rank_lift}, there exists a matrix $A(t)\in \mathbb{K}^{m\times n}$ such that $\ELTrop\Big(A(t)\Big)=E_A$ and
$$\rank\Big(A(t)\Big) = \ELTsubmatrixrank(E_A).$$
Therefore by Definition \ref{kapranov_rank_def}
$$\ELTsubmatrixrank(E_A)\geq \Kapranovrank(A).$$
Since this is true for all such $E_A$, then
$$\ELTrank(A)\geq \Kapranovrank(A).$$

On the other hand, choose a matrix $A(t)\in \mathbb{K}^{m\times n}$ such that
$$\tau\Big(\ELTrop\big(A(t)\big)\Big)=A,$$
and
$$\rank\Big(A(t)\Big)= \Kapranovrank(A).$$
Since
$$\rank\Big(A(t)\Big)\geq \ELTsubmatrixrank\Big(\ELTrop\big(A(t)\big)\Big),$$
and
$$\ELTsubmatrixrank\Big(\ELTrop\big(A(t)\big)\Big)\geq \ELTrank(A),$$
then
$$\Kapranovrank(A)=\rank\Big(A(t)\Big)\geq \ELTrank(A).$$\\

Together we obtain
$$\Kapranovrank(A)=\ELTrank(A).$$
\end{proof}

\section{Inner Products and Orthogonality}

In this section, we introduce the definitions of ELT inner product and orthogonality.\break Although we prove that an orthogonal set of vectors is linearly independent, if we add an orthogonal vector to a linearly independent set, we may obtain a linearly dependent set.\\

\subsection{Inner product}

\begin{defn}
The \textbf{ELT conjugate} of $\layer{a}{\ell}\in\ELT{\mathbb{R}}{\mathbb{C}}$ is
$$\overline{\layer{a}{\ell}}=\layer{a}{\overline{\ell}}$$
where $\overline{\ell}$ is the usual conjugate in $\CC$. We define $\overline{-\infty}=-\infty$.
\end{defn}

\begin{defn}
Let $\R=\ELT{\mathbb{R}}{\mathbb{C}}$ be an ELT algebra. An \textbf{ELT inner product} is a function
$$\langle\cdot,\cdot\rangle:\left(\overline{\R}\right)^n\times \left(\overline{\R}\right)^n\rightarrow \overline{\R},$$
that satisfies the following three axioms for all vectors $v,u,w\in \left(\overline{\R}\right)^n$ and all scalars $a,b\in \overline{\R}$:\\
\begin{enumerate}
    \item $\langle av+bu,w\rangle=a\langle v,w\rangle+b\langle u,w\rangle$.
    \item $\left\langle v,u\right\rangle=\overline{\left\langle u,v\right\rangle}$.%$\tau\Big(\langle v,u\rangle\Big)=\tau\Big(\langle u,v\rangle\Big)$, and $s\Big(\langle v,u\rangle\Big)=\overline{s\Big(\langle u,v\rangle\Big)}$ (complex conjugate).
    \item $s\Big(\langle v,v\rangle\Big)\geq 0_\mathbb{R}$ and if $v\in \left(\overline{\R^{\times}}\right)^n$, then $s\Big(\langle v,v\rangle\Big)=0_\mathbb{R}\iff v=(-\infty,...,-\infty)$.
\end{enumerate}
\end{defn}
Notice that we abuse the over-line notation for both an ELT algebra with the $-\infty$ element~($\overline{\R^{\times}}$) and the ELT conjugate.% ($\overline{s\Big(\langle u,v\rangle\Big)}$).\\

\begin{example}

For any two vectors $v_1,v_2\in \left(\overline{\R}\right)^n$,
$$v_1=\left(\begin{matrix}\layer{\alpha_1}{z_1}\\\vdots\\\layer{\alpha_n}{z_n}\end{matrix}\right), v_2=\left(\begin{matrix}\layer{\beta_1}{w_1}\\\vdots\\\layer{\beta_n}{w_n}\end{matrix}\right)$$
we define the \textbf{standard} inner product
$$v_1\cdot v_2:=\layer{\alpha_1}{z_1}\layer{\beta_1}{\overline{w_1}}+...+\layer{\alpha_n}{z_n}\layer{\beta_n}{\overline{w_n}}.$$

The first two axioms are trivial to prove, and we will prove the third.\\

If $v=(-\infty,...,-\infty)$ then $v\cdot v=-\infty$, and thus $s\Big(v\cdot v\Big)=0_\mathbb{C}$.\\
Otherwise write
$$v=\left(\begin{matrix}\layer{\alpha_1}{z_1}\\\vdots\\\layer{\alpha_n}{z_n}\end{matrix}\right),$$
$$S=\{i|\alpha_i=\max_{1\leq j\leq n}\alpha_j\}.$$
Then
$$v\cdot v= \sum_{i\in S}\layer{2\alpha_i}{|z_i|^2},$$
and
$$s\Big(v\cdot v\Big)=\sum_{i\in S}|z_i|^2\geq 0_{\mathbb{R}}.$$
But $v\in \left(\overline{\R^{\times}}\right)^n$ and $v\neq (-\infty,...,-\infty)$, implying $z_i\neq 0_\mathbb{C}$ for all $i\in S$, and thus $s\Big(v\cdot v\Big)\neq 0_\mathbb{C}$.\\
\end{example}

As one can see, the tangible value of the standard inner product depends only on the tangible values of the input vectors. This, in fact, holds for every inner product:

\begin{lem}\label{lem:t-of_inn-prod-independent-of-layers}
If $u_1,u_2,v_1,v_2\in\left(\overline{\R}\right)^n$ satisfy $\t\left(u_1\right)=\t\left(u_2\right)$ and $\t\left(v_1\right)=\t\left(v_2\right)$, then
$$\t\left(\left\langle u_1,v_1\right\rangle\right)=\t\left(\left\langle u_2,v_2\right\rangle\right)$$
\end{lem}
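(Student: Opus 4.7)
The plan is to reduce the lemma to the following one-variable statement: for any $v\in\left(\overline{\R}\right)^n$, if $\t\left(u_1\right)=\t\left(u_2\right)$ then $\t\left(\left\langle u_1,v\right\rangle\right)=\t\left(\left\langle u_2,v\right\rangle\right)$. Once this is known for the first slot, conjugate symmetry lets me rerun the argument in the second slot, producing the chain
$$\t\left(\left\langle u_1,v_1\right\rangle\right)=\t\left(\left\langle u_2,v_1\right\rangle\right)=\t\left(\left\langle u_2,v_2\right\rangle\right).$$

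For the reduced statement I would apply \Lref{lem:t-equals-criterion}(2) componentwise: since $\t\left((u_1)_i\right)=\t\left((u_2)_i\right)$ for every $i$, there exist $a_i,b_i\in\overline{\R}$ with $(u_1)_i=(u_2)_i+a_i$ and $(u_2)_i=(u_1)_i+b_i$, which assemble into vector equalities $u_1=u_2+a$ and $u_2=u_1+b$. The first inner-product axiom, specialised to the scalars $\layer{0}{1},\layer{0}{1}$, yields additivity in the first slot, so $\left\langle u_1,v\right\rangle=\left\langle u_2,v\right\rangle+\left\langle a,v\right\rangle$ and $\left\langle u_2,v\right\rangle=\left\langle u_1,v\right\rangle+\left\langle b,v\right\rangle$. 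Applying the easy direction of \Lref{lem:t-equals-criterion}(1) to each of these gives both $\t\left(\left\langle u_2,v\right\rangle\right)\leq\t\left(\left\langle u_1,v\right\rangle\right)$ and $\t\left(\left\langle u_1,v\right\rangle\right)\leq\t\left(\left\langle u_2,v\right\rangle\right)$, hence equality.

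To transport the argument to the second slot I first derive additivity there from the second axiom: $\left\langle u,v+w\right\rangle=\overline{\left\langle v+w,u\right\rangle}=\overline{\left\langle v,u\right\rangle+\left\langle w,u\right\rangle}$, and a direct check on the two cases in the definition of ELT addition shows that the ELT conjugate preserves tangible values and commutes with addition, so this equals $\left\langle u,v\right\rangle+\left\langle u,w\right\rangle$. Repeating the preceding paragraph with $u_2$ fixed and $v_1,v_2$ varying then produces $\t\left(\left\langle u_2,v_1\right\rangle\right)=\t\left(\left\langle u_2,v_2\right\rangle\right)$, closing the chain displayed above.

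I do not expect any real obstacle: every step is a direct invocation of \Lref{lem:t-equals-criterion} or of one of the three inner-product axioms. The only mildly subtle point is the presence of $-\infty$ coordinates in the componentwise decomposition of $u_1,u_2$ (and $v_1,v_2$), but \Lref{lem:t-equals-criterion} already handles the cases $x=y=-\infty$ and $\t(x)<\t(y)$, so no extra work is needed beyond recording that the decomposition is applied one coordinate at a time.
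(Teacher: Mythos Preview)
Your proposal is correct and follows essentially the same route as the paper: reduce to the one-variable case via \Lref{lem:t-equals-criterion} applied componentwise, use additivity in the first slot, and conclude by \Lref{lem:t-equals-criterion} again; the second slot is then handled by conjugate symmetry. The only cosmetic difference is that the paper passes to the second slot by directly using $\t\left(\left\langle v,u\right\rangle\right)=\t\left(\left\langle u,v\right\rangle\right)$ and rerunning the first-slot argument, whereas you take the (equally valid) extra step of deriving additivity in the second slot.
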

\begin{proof}
We prove that if $\t\left(u_1\right)=\t\left(u_2\right)$, then
$$\t\left(\left\langle u_1,v\right\rangle\right)=\t\left(\left\langle u_2,v\right\rangle\right)$$
for any $v\in\left(\overline{\R}\right)^n$. The general assertion follows since $\t\left(\left\langle v,u\right\rangle\right)=\t\left(\left\langle u,v\right\rangle\right)$ for any $u,v$.

By \Lref{lem:t-equals-criterion}, there exist $w_1,w_2\in\left(\overline{\R}\right)^n$ such that $u_1=u_2+w_1$ and $u_2=u_1+w_1$. Thus,
\begin{eqnarray*}
% \nonumber % Remove numbering (before each equation)
  \left\langle u_1,v\right\rangle &=& \left\langle u_2+w_1,v\right\rangle=\left\langle u_2,v\right\rangle+\left\langle w_1,v\right\rangle \\
  \left\langle u_2,v\right\rangle &=& \left\langle u_1+w_2,v\right\rangle=\left\langle u_1,v\right\rangle+\left\langle w_2,v\right\rangle
\end{eqnarray*}
By \Lref{lem:t-equals-criterion}, the assertion is proved.
\end{proof}

As an immediate corollary, we get:

\begin{cor}
Let $v\in\left(\overline{\R}\right)^n$. If $\left\langle v,v\right\rangle=-\infty$, then
$$v=\left(-\infty,\dots,-\infty\right)$$
\end{cor}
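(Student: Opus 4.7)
The plan is to prove the contrapositive: if $v\in\left(\overline{\R}\right)^n$ is not equal to $\left(-\infty,\dots,-\infty\right)$, then $\left\langle v,v\right\rangle\neq-\infty$. The idea is to reduce to the third axiom of an ELT inner product, which already gives us the conclusion in the special case $v\in\left(\overline{\R^{\times}}\right)^{n}$, by replacing $v$ with a suitable companion vector $v'\in\left(\overline{\R^\times}\right)^n$ that has the same tangible values.

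Specifically, I would construct $v'=\left(v'_1,\dots,v'_n\right)$ coordinate by coordinate: set $v'_i=-\infty$ whenever $v_i=-\infty$, and otherwise write $v_i=\layer{a_i}{\ell_i}$ and set $v'_i=\layer{a_i}{1}$. By construction, $v'\in\left(\overline{\R^\times}\right)^n$, $\t\left(v'\right)=\t\left(v\right)$ coordinate-wise, and $v'\neq\left(-\infty,\dots,-\infty\right)$ (since the nonzero positions of $v$ remain nonzero in $v'$). The third axiom of the ELT inner product then forces $s\left(\left\langle v',v'\right\rangle\right)\neq 0$, and in particular $\left\langle v',v'\right\rangle\neq-\infty$, so $\t\left(\left\langle v',v'\right\rangle\right)\in\RR$.

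I would then invoke \Lref{lem:t-of_inn-prod-independent-of-layers} with $u_1=v_1=v$ and $u_2=v_2=v'$. Since $\t\left(v\right)=\t\left(v'\right)$, the lemma yields
$$\t\left(\left\langle v,v\right\rangle\right)=\t\left(\left\langle v',v'\right\rangle\right)\in\RR.$$
But the tangible value of $-\infty$ is $-\infty$, so the right-hand side being a real (finite) element rules out $\left\langle v,v\right\rangle=-\infty$, giving the contrapositive and hence the corollary.

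The only subtle point is making sure $v'$ is a legitimate element of $\left(\overline{\R^\times}\right)^n$ while still being honestly distinct from the all-$(-\infty)$ vector and preserving all tangible values of $v$; the recipe above handles this cleanly. Once $v'$ is in place the argument is essentially two one-line applications of an axiom and a lemma already proven in the paper, so I do not anticipate any genuine obstacle.
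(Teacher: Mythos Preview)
Your proposal is correct and follows essentially the same approach as the paper: both arguments replace $v$ by a companion vector with identical tangible values but layers in $\left(\overline{\R^{\times}}\right)^n$, invoke \Lref{lem:t-of_inn-prod-independent-of-layers} to transfer information about $\t\left(\left\langle v,v\right\rangle\right)$, and then apply the third inner-product axiom. The only cosmetic difference is that you argue by contrapositive while the paper argues directly, and your construction of $v'$ is slightly more explicit about handling the $-\infty$ coordinates.
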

\begin{proof}
By \Lref{lem:t-of_inn-prod-independent-of-layers}, we may assume that
$$s\left(v\right)=\left(1_{\CC},\dots,1_{\CC}\right)$$
(since changing the layers of the elements of $v$ will not affect the tangible value of $\left\langle v,v\right\rangle$, thus it will remain $-\infty$).Therefore we have $v\in\left(\overline{\R^{\times}}\right)^n$ with $\left\langle v,v\right\rangle=-\infty$. This immediately forces $v=\left(-\infty,\dots,-\infty\right)$, and the assertion is proved.
\end{proof}

\subsection{An ELT Cauchy-Schwarz Inequality}

We are now ready to prove the ELT version of Cauchy-Schwarz inequality.

\begin{thm}[ELT Cauchy-Schwarz Inequality]\label{thm:cauchy-schwarz}
Let $u,v\in\left(\overline{\R}\right)^n$. Then
$$\t\left(\left\langle u,v\right\rangle^2\right)\leq \t\left(\left\langle u,u\right\rangle\cdot \left\langle v,v\right\rangle\right)$$
\end{thm}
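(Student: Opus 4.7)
My plan is to argue by contradiction, invoking Axiom~3 twice on carefully chosen vectors. Write $A=\langle u,u\rangle$, $B=\langle v,v\rangle$ and $\beta=\langle u,v\rangle$; then $\bar\beta=\langle v,u\rangle$, and $\bar A=A$, $\bar B=B$ because Axiom~3 forces the layers $s(A),s(B)$ to be real. By \Lref{lem:t-of_inn-prod-independent-of-layers} the three tangible values $\tau(\beta)$, $\tau(A)$, $\tau(B)$ depend only on the tangible values of $u$ and $v$, so the target inequality is insensitive to the layers of $u$ and $v$. I may therefore replace any zero-layered entry of $u$ or $v$ by a non-zero-layered one of the same tangible and assume $u,v\in(\overline{\R^{\times}})^{n}$; if either is the $-\infty$-vector both sides equal $-\infty$, so from now on assume both are non-trivial, and Axiom~3 then gives $s(A),s(B)>0$.

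Assume for contradiction that $\tau(A)+\tau(B)<2\tau(\beta)$. Then the open interval $I=\bigl(\tau(A)-\tau(\beta),\,\tau(\beta)-\tau(B)\bigr)$ is non-empty. Pick any $\mu\in I$ and any $c\in\CC^{\times}$, and set $\lambda=\layer{\mu}{c}$. Expanding by sesquilinearity gives
$$\langle u+\lambda v,\,u+\lambda v\rangle\;=\;A\;+\;\bigl(\lambda\bar\beta+\bar\lambda\beta\bigr)\;+\;|\lambda|^{2}B,$$
whose three summands have tangible values $\tau(A)$, $\mu+\tau(\beta)$, and $2\mu+\tau(B)$. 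The choice of $\mu$ forces the middle tangible to strictly exceed the other two, so the whole sum equals
$$\lambda\bar\beta+\bar\lambda\beta\;=\;\layer{\mu+\tau(\beta)}{\,c\,\overline{s(\beta)}+\bar c\,s(\beta)\,},$$
a non-$-\infty$ element whose layer is the real number $2\,\mathrm{Re}\!\bigl(c\,\overline{s(\beta)}\bigr)$.

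By Axiom~3 this layer must be non-negative for every $c\in\CC^{\times}$. Running $c$ through $\{1,-1,i,-i\}$ forces both $\mathrm{Re}(s(\beta))$ and $\mathrm{Im}(s(\beta))$ to vanish, so $s(\beta)=0$; the displayed expression then reduces to $\layer{\mu+\tau(\beta)}{0}$, which is non-$-\infty$ but of layer zero. To conclude, I pick $c\in\CC^{\times}$ outside the finite set of bad values at which some component of $u+\lambda v$ could acquire layer zero (namely $c=-s(u_{i})/s(v_{i})$ for those indices $i$ with $\tau(u_{i})=\mu+\tau(v_{i})$). For such $c$ the vector $u+\lambda v$ lies in $(\overline{\R^{\times}})^{n}$ and is non-trivial (because $u$ is), so the non-degeneracy clause of Axiom~3 demands $s\bigl(\langle u+\lambda v,u+\lambda v\rangle\bigr)>0$, contradicting the layer-zero computation above. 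The main obstacle is this coordination of tangible and layer information: $\mu$ must be chosen so the cross-term dominates tangibly---possible precisely when Cauchy--Schwarz fails---and then enough layer information must be extracted by varying $c$ to force $s(\beta)=0$ before Axiom~3 can be invoked a second time on a vector of the form $u+\lambda v$.
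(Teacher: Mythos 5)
Your proof is correct and starts along the same path as the paper's: reduce via \Lref{lem:t-of_inn-prod-independent-of-layers} to $u,v\in\left(\overline{\R^\times}\right)^n$, expand the quadratic form $\left\langle u+\lambda v,u+\lambda v\right\rangle$, and exploit Axiom 3 to show the cross term cannot tangibly dominate the two ``square'' terms. Where you diverge is in handling the layer of $\beta=\left\langle u,v\right\rangle$. The paper normalises $s\left(\left\langle u,v\right\rangle\right)=1$, so the cross term becomes $\layer{0}{2}\left\langle u,v\right\rangle\lambda$ with real layer, a single substitution $s\left(\lambda\right)=-1$ already produces a negative layer if it dominated, and the proof then concludes by evaluating at the balance point $\t\left(\left\langle u,u\right\rangle\lambda^2\right)=\t\left(\left\langle v,v\right\rangle\right)$. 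You instead argue by contradiction without normalising: assuming the cross term dominates at some tangible $\mu$, you run the layer $c$ of $\lambda$ through $\{1,-1,i,-i\}$ to force $s\left(\beta\right)=0$, and then invoke the \emph{nondegeneracy} clause of Axiom 3 on a generic $u+\lambda v\in\left(\overline{\R^\times}\right)^n$. This is a genuine, if modest, divergence: the paper uses only the positivity half of Axiom 3, while you also use its nondegeneracy half and an extra genericity step in the choice of $c$. What you gain is that you avoid the normalisation $s\left(\left\langle u,v\right\rangle\right)=1$, which the paper asserts via ``the same argument'' but which is not automatic for an abstract inner product --- after moving $u,v$ into $\left(\overline{\R^\times}\right)^n$ one could have $s\left(\left\langle u,v\right\rangle\right)=0$, and then no scalar rescaling achieves layer $1$; your route identifies $s\left(\beta\right)=0$ as the one stubborn case and disposes of it directly, so it is arguably the more watertight version of the same argument.
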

\begin{proof}
If either $u=\left(-\infty,\dots,-\infty\right)$ or $v=\left(-\infty,\dots,-\infty\right)$, the assertion is clear. Hence, we may assume that $u,v\neq\left(-\infty,\dots,-\infty\right)$. In addition, if $\left\langle u,v\right\rangle=-\infty$, the assertion is also trivial, so we assume that $\left\langle u,v\right\rangle\neq-\infty$.\\

We may change the layers of $u,v$ such that $u,v\in\left(\overline{\R^{\times}}\right)^n$ (since by \Lref{lem:t-of_inn-prod-independent-of-layers}, the tangible value of the inner products in the assertion will not be affected by a change in the layers). The same argument also allows us the assume that $s\left(\left\langle u,v\right\rangle\right)=1$. We note that since $s\left(\left\langle u,v\right\rangle\right)=1$, $\left\langle u,v\right\rangle=\left\langle v,u\right\rangle$.\\

Let $\lambda\in\overline{\R}$. Consider the ELT function
$$f\left(\lambda\right)=\left\langle\lambda u+v,\lambda u+v\right\rangle=\left\langle u,u\right\rangle\lambda\overline{\lambda}+\layer{0}{2}\left\langle u,v\right\rangle\lambda+\left\langle v,v\right\rangle.$$
By its definition, $s\left(f\left(\lambda\right)\right)\ge 0$ for any choice of $\lambda\in\overline{\R}$.\\

We claim that the middle monomial, $\layer{0}{2}\left\langle u,v\right\rangle\lambda$, cannot dominate the other two monomials at any point $\lambda$. Indeed, suppose that for $\lambda=\layer{\alpha}{\ell}$,
$$\t\left(\layer{0}{2}\left\langle u,v\right\rangle\lambda\right)>\max\left\{\t\left(\left\langle u,u\right\rangle\lambda^2\right), \t\left(\left\langle v,v\right\rangle\right)\right\}$$
Substituting $\lambda=\layer{\alpha}{-1}$ would yield $s\left(f\left(\layer{\alpha}{-1}\right)\right)=-2<0$, which is absurd.\\

We have proven that for any $\lambda\in\overline{\R}$,
$$\t\left(\layer{0}{2}\left\langle u,v\right\rangle\lambda\right)\leq\max\left\{\t\left(\left\langle u,u\right\rangle\lambda^2\right), \t\left(\left\langle v,v\right\rangle\right)\right\}$$
Equivalently,
$$\t\left(\left\langle u,v\right\rangle\right)+_{\RR}\t\left(\lambda\right)\leq\max\left\{\t\left(\left\langle u,u\right\rangle\right)+_{\RR}2\t\left(\lambda\right), \t\left(\left\langle v,v\right\rangle\right)\right\}.$$

Let us consider the points where the tangible values of $\left\langle u,u\right\rangle\lambda^2$ and $\left\langle v,v\right\rangle$ are equal. These are the points $\lambda\in\overline{\R}$ such that
$$\t\left(\lambda\right)=\frac{1}{2}\cdot_{\RR}\left(\t\left(\left\langle v,v\right\rangle\right)-_{\RR}\t\left(\left\langle u,u\right\rangle\right)\right).$$
Thus,
$$\t\left(\left\langle u,v\right\rangle\right)+_{\RR}\frac{1}{2}\cdot_{\RR}\left(\t\left(\left\langle v,v\right\rangle\right)-_{\RR}\t\left(\left\langle u,u\right\rangle\right)\right)\leq\t\left(\left\langle v,v\right\rangle\right)$$
which can be simplified to
$$\t\left(\left\langle u,v\right\rangle^2\right)=2\cdot_{\RR}\t\left(\left\langle u,v\right\rangle\right)\leq \t\left(\left\langle u,u\right\rangle\right)+_{\RR}\t\left(\left\langle v,v\right\rangle\right)=\t\left(\left\langle u,u\right\rangle\cdot \left\langle v,v\right\rangle\right)$$
as required.
\end{proof}

\begin{example}\label{exam:equality in CS with independent vectors}
There may be equality in the ELT Cauchy-Schwarz inequality even when $u,v$ are linearly independent. For example, take
$$u=\left(\begin{matrix}\layer{2}{1}\\\layer{0}{1}\end{matrix}\right), v=\left(\begin{matrix}\layer{2}{1}\\\layer{1}{1}\end{matrix}\right)$$
and equip $\overline{\R}^2$ with the standard inner product. The set $\left\{u,v\right\}$ is linearly independent, since
$$\det\left(\begin{matrix}\layer{2}{1}&\layer{2}{1}\\\layer{0}{1}&\layer{1}{1}\end{matrix}\right)=\layer{3}{1}$$
is not of layer zero. We note that
\begin{eqnarray*}
% \nonumber % Remove numbering (before each equation)
  u\cdot v &=& \layer{4}{1}+\layer{1}{1}=\layer{4}{1}\\
  u\cdot u &=& \layer{4}{1}+\layer{0}{1}=\layer{4}{1} \\
  v\cdot v &=& \layer{4}{1}+\layer{2}{1}=\layer{4}{1}
\end{eqnarray*}
Thus one can see that
$$\left(u\cdot v\right)^2=\layer{8}{1}=\left(u\cdot u\right)\left(v\cdot v\right)$$
even though the set $\left\{u,v\right\}$ is linearly independent.
\end{example}

\begin{example}
Furthermore, even if $\left\{u,v\right\}$ is a linearly dependent set, there may not be equality in the ELT Cauchy-Schwarz inequality. For instance, consider
$$u=\left(\begin{matrix}\layer{2}{0}\\\layer{0}{1}\end{matrix}\right), v=\left(\begin{matrix}\layer{0}{1}\\\layer{1}{0}\end{matrix}\right)$$
where we endow $\overline{\R}^2$ with the standard inner product. $\left\{u,v\right\}$ is a linearly dependent set, since
$$s\left(u_1+u_2\right)=s\left(\begin{matrix}\layer{2}{0}\\\layer{1}{0}\end{matrix}\right)= \left(\begin{matrix}0\\0\end{matrix}\right)$$
We calculate the inner products:
\begin{eqnarray*}
% \nonumber % Remove numbering (before each equation)
  u\cdot v &=& \layer{2}{0}+\layer{1}{0}=\layer{2}{0}\\
  u\cdot u &=& \layer{4}{0}+\layer{0}{1}=\layer{4}{0} \\
  v\cdot v &=& \layer{0}{1}+\layer{2}{0}=\layer{2}{0}
\end{eqnarray*}
Therefore
$$\t\left(\left(u\cdot v\right)^2\right)=\t\left(\layer{4}{0}\right)=4<6=\t\left(\layer{4}{0}\cdot\layer{2}{0}\right)=\t\left(\left(u\cdot u\right)\left(v\cdot v\right)\right)$$
\end{example}

However, we do have the following consolation:

\begin{thm}[Equality in the ELT Cauchy-Schwarz Inequality]
Let $u,v\in\left(\overline{\R}\right)^n$, and consider~$\left(\overline{\R}\right)^n$ with the standard inner product.
\begin{enumerate}
  \item We have equality of tangible values
  $$\t\left(\left(u\cdot v\right)^2\right)=\t\left(\left(u\cdot u\right)\left(v\cdot v\right)\right)$$
  if and only if there exists $1\leq k\leq n$ such that
  $$\t\left(u_k\right)=\max_{1\leq j\leq n}\t\left(u_j\right)$$
  and
  $$\t\left(v_k\right)=\max_{1\leq j\leq n}\t\left(v_j\right)$$
  In other words, there is equality of tangible values in the ELT Cauchy-Schwarz inequality if and only if the maximal tangible value in $u$ and $v$ is achieved in some common coordinate.
  \item Define $s_u,s_v\in\CC^n$ by
  $$\left(s_u\right)_i=\left\{\begin{matrix}s\left(u_i\right),&\t\left(u_i\right)=\max_{1\leq j\leq n}\t\left(u_j\right)\\0,&\textrm{Otherwise}\end{matrix}\right.$$
  and $s_v$ similarly for $v$. Then we have equality
  $$\left(u\cdot v\right)^2=\left(u\cdot u\right)\left(v\cdot v\right)$$
  if and only if $\t\left(\left(u\cdot v\right)^2\right)=\t\left(\left(u\cdot u\right)\left(v\cdot v\right)\right)$ and $s_u$ and $s_v$ are linearly dependent in $\CC^n$.
\end{enumerate}

\end{thm}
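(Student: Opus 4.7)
The plan is to handle both parts by a direct computation that isolates the dominant contributions in the standard inner product, and then reduces the remaining statements to classical facts in $\CC^n$. Throughout, set $\alpha=\max_j\t(u_j)$ and $\beta=\max_j\t(v_j)$, and call a coordinate $i$ \emph{$u$-dominant} if $\t(u_i)=\alpha$, and similarly \emph{$v$-dominant}.

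For part (1), I first expand, using the definition of the standard inner product, that
$$u\cdot u=\sum_{i=1}^n \layer{2\t(u_i)}{|s(u_i)|^2},\qquad v\cdot v=\sum_{i=1}^n \layer{2\t(v_i)}{|s(v_i)|^2},$$
so $\t(u\cdot u)=2\alpha$ and $\t(v\cdot v)=2\beta$, giving $\t\bigl((u\cdot u)(v\cdot v)\bigr)=2\alpha+2\beta$. On the other side, $\t(u\cdot v)=\max_i\bigl(\t(u_i)+\t(v_i)\bigr)$, and each summand is bounded by $\alpha+\beta$ with equality at index $i$ iff $i$ is simultaneously $u$- and $v$-dominant. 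Doubling yields $\t((u\cdot v)^2)=2\alpha+2\beta$ iff such a common index $k$ exists; this settles (1).

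For part (2), the forward direction is immediate: full equality certainly implies tangible equality, so I only need to show (under tangible equality) that the layer equation $s((u\cdot v)^2)=s((u\cdot u)(v\cdot v))$ is equivalent to linear dependence of $s_u,s_v$ in $\CC^n$. I compute the layers by picking out the dominant summands in each inner product. For $u\cdot u$ the dominant indices are exactly the $u$-dominant ones, giving $s(u\cdot u)=\sum_{\t(u_i)=\alpha}|s(u_i)|^2=\|s_u\|^2$, and symmetrically $s(v\cdot v)=\|s_v\|^2$. Under tangible equality from part (1), the dominant summands in $u\cdot v$ are the indices $i$ that are both $u$- and $v$-dominant, so
$$s(u\cdot v)=\sum_{\substack{\t(u_i)=\alpha\\ \t(v_i)=\beta}} s(u_i)\,\overline{s(v_i)}=\sum_{i=1}^{n}(s_u)_i\,\overline{(s_v)_i}=\langle s_u,s_v\rangle_{\CC},$$
the classical Hermitian inner product, where the key point is that cross terms with only one of $\t(u_i)=\alpha$ or $\t(v_i)=\beta$ are non-dominant and contribute $0$ to either side.

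Thus the required layer equality becomes $\langle s_u,s_v\rangle_{\CC}^2=\|s_u\|^2\|s_v\|^2$, which (together with $\|s_u\|^2,\|s_v\|^2\in\RR_{\ge 0}$) is precisely the equality case of the classical Cauchy--Schwarz inequality on $\CC^n$, equivalent to $s_u,s_v$ being linearly dependent. The main obstacle I anticipate is the careful bookkeeping in the layer computation of $s(u\cdot v)$ above, namely verifying that no partially-dominant cross terms (where only one of the tangible maxima is attained) sneak into the sum; this requires using the tangible equality from part (1) to conclude that the set of indices realizing $\t(u_i)+\t(v_i)=\alpha+\beta$ coincides with $\{i:(s_u)_i\neq 0 \text{ and } (s_v)_i\neq 0\}$, after which the reduction to classical Cauchy--Schwarz is painless.
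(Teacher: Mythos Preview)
Your proposal is correct and follows essentially the same approach as the paper: both arguments compute $\t(u\cdot u)=2\alpha$, $\t(v\cdot v)=2\beta$, and $\t(u\cdot v)=\max_i(\t(u_i)+\t(v_i))$ to settle part~(1), and then identify $s(u\cdot u)=\|s_u\|^2$, $s(v\cdot v)=\|s_v\|^2$, and $s(u\cdot v)=\langle s_u,s_v\rangle_{\CC}$ to reduce part~(2) to the classical Cauchy--Schwarz equality case in $\CC^n$. Your careful remark about ruling out partially-dominant cross terms is exactly the point the paper encodes by introducing the index set $I$ of simultaneously dominant coordinates.
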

\begin{proof}~
\begin{enumerate}
  \item Using the definition of the standard inner product, we see that
  $$\t\left(\left(u\cdot v\right)\right)=\t\left(\sum_{i=1}^{n}u_i\overline{v_i}\right)=\max_{1\leq i\leq n}\t\left(u_i \overline{v_i}\right)=\max_{1\leq i\leq n}\left(\t\left(u_i\right)+_{\RR}\t\left(v_i\right)\right)$$
  which implies
  \begin{eqnarray*}
  % \nonumber % Remove numbering (before each equation)
    \t\left(\left(u\cdot v\right)^2\right) &=& 2\cdot_{\RR}\max_{1\leq i\leq n}\left(\t\left(u_i\right)+_{\RR}\t\left(v_i\right)\right) \\
  \end{eqnarray*}
  Similarly,
  \begin{eqnarray*}
  % \nonumber % Remove numbering (before each equation)
    \t\left(\left(u\cdot u\right)\right) &=& 2\cdot_{\RR}\max_{1\leq i\leq n}\t\left(u_i\right) \\
    \t\left(\left(v\cdot v\right)\right) &=& 2\cdot_{\RR}\max_{1\leq i\leq n}\t\left(v_i\right)
  \end{eqnarray*}

  Hence there is equality in Cauchy-Schwarz if and only if
  $$2\cdot_{\RR}\max_{1\leq i\leq n}\left(\t\left(u_i\right)+_{\RR}\t\left(v_i\right)\right)=2\cdot_{\RR}\max_{1\leq i\leq n}\t\left(u_i\right)+_{\RR}2\cdot_{\RR}\max_{1\leq i\leq n}\t\left(v_i\right)$$
  and this is equivalent to the above condition.

  \item For convenience, we define
  $$I=\left\{i\mid \t\left(u_i\right)=\max_{1\leq j\leq n}\t\left(u_j\right)\,\textrm{and}\,\t\left(v_i\right)=\max_{1\leq j\leq n}\t\left(v_j\right)\right\}.$$
  Therefore an equivalent formulation of the first part of this theorem is that
  $$\t\left(\left(u\cdot v\right)^2\right)=\t\left(\left(u\cdot u\right)\left(v\cdot v\right)\right)$$
  if and only if $I\neq\varnothing$.\\

  Now, if $I\neq\varnothing$, we have that
  $$s\left(u\cdot v\right)=s\left(\sum_{i=1}^{n}u_i\overline{v_i}\right)=\sum_{i\in I}s\left(u_i\right)\overline{s\left(v_i\right)}=s_u\cdot s_v$$
  where $s_u\cdot s_v$ is the standard inner product in $\CC^n$ of $s_u$ and $s_v$. Similarly,
  $$s\left(u\cdot u\right)=s_u\cdot s_u$$
  and
  $$s\left(v\cdot v\right)=s_v\cdot s_v$$
  Thus $$\left(u\cdot v\right)^2=\left(u\cdot u\right)\left(v\cdot v\right)$$ if and only if
  $$\t\left(\left(u\cdot v\right)^2\right)=\t\left(\left(u\cdot u\right)\left(v\cdot v\right)\right)$$
  and
  $$s\left(\left(u\cdot v\right)^2\right)=s\left(\left(u\cdot u\right)\left(v\cdot v\right)\right)$$
  if and only if $I\neq\varnothing$ and
  $$s\left(\left(u\cdot v\right)^2\right) = s\left(\left(u\cdot u\right)\left(v\cdot v\right)\right)$$
  if and only if $I\neq\varnothing$ and
  $$\left(s_u\cdot s_v\right)^2=\left(s_u\cdot s_u\right)\left(s_v\cdot s_v\right)$$
  if and only if $I\neq\varnothing$ and the set $\left\{s_u,s_v\right\}$ is linearly dependent (by the classical Cauchy-Schwarz inequality in $\CC^n$), as required.
\end{enumerate}
\end{proof}

We return to Cauchy-Schwarz inequality, and present the following corollary:

\begin{cor}\label{CS}
For any two vectors $u,v\in\left(\overline{\R}\right)^n$,
$$\t\left(\left\langle u,v\right\rangle\right)\leq\t\left(\left\langle u,u\right\rangle+\left\langle v,v\right\rangle\right)$$
In other words, either
$$\t\left(\left\langle u,v\right\rangle\right)\leq\t\left(\left\langle u,u\right\rangle\right)$$
or
$$\t\left(\left\langle u,v\right\rangle\right)\leq\t\left(\left\langle v,v\right\rangle\right)$$
\end{cor}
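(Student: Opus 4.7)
The plan is to derive this corollary directly from the ELT Cauchy-Schwarz inequality (\Tref{thm:cauchy-schwarz}), using the observation that in $\R = \ELT{\RR}{\CC}$ the tangible value of a sum equals the max of the tangible values of the summands, while the tangible value of a product equals the sum of the tangible values.

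First I would dispose of degenerate cases. If $\left\langle u,v\right\rangle = -\infty$, both assertions are immediate since $\t(-\infty)$ is the bottom element. Otherwise, the ELT Cauchy-Schwarz inequality gives
$$\t\left(\left\langle u,v\right\rangle^2\right) \leq \t\left(\left\langle u,u\right\rangle\cdot \left\langle v,v\right\rangle\right).$$
Using that multiplication in $\R$ corresponds to addition of tangible values in $\RR$, this rewrites as
$$2\cdot_{\RR}\t\left(\left\langle u,v\right\rangle\right) \leq \t\left(\left\langle u,u\right\rangle\right) +_{\RR} \t\left(\left\langle v,v\right\rangle\right).$$
Dividing by $2$ in $\RR$ yields $\t\left(\left\langle u,v\right\rangle\right) \leq \tfrac{1}{2}\bigl(\t\left(\left\langle u,u\right\rangle\right)+\t\left(\left\langle v,v\right\rangle\right)\bigr)$, which is bounded above by $\max\bigl\{\t\left(\left\langle u,u\right\rangle\right),\,\t\left(\left\langle v,v\right\rangle\right)\bigr\}$.

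Finally, I would invoke the definition of addition in $\R$: for any $\alpha,\beta\in\overline{\R}$, one has $\t\left(\alpha+\beta\right)=\max\left\{\t(\alpha),\t(\beta)\right\}$. Applied to $\alpha=\left\langle u,u\right\rangle$ and $\beta=\left\langle v,v\right\rangle$, this gives
$$\t\left(\left\langle u,v\right\rangle\right) \leq \max\left\{\t\left(\left\langle u,u\right\rangle\right),\t\left(\left\langle v,v\right\rangle\right)\right\} = \t\left(\left\langle u,u\right\rangle + \left\langle v,v\right\rangle\right),$$
which is precisely the stated inequality. Since the maximum on the right is attained by one of the two arguments, whichever one achieves the maximum yields the "in other words" dichotomy.

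There is no real obstacle here: the only subtlety is making sure to pass between the multiplicative/additive language of $\R$ and the ordinary arithmetic on tangible values in $\RR$, and to observe that dividing by $2$ (a valid operation in the ordered group $\RR$) turns a bound on $2\t(\langle u,v\rangle)$ into one on $\t(\langle u,v\rangle)$, which is then weakened from an average to a maximum.
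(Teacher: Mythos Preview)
Your proof is correct and follows essentially the same route as the paper: apply the ELT Cauchy-Schwarz inequality, divide by $2$ in $\RR$ to bound $\t(\langle u,v\rangle)$ by the average of $\t(\langle u,u\rangle)$ and $\t(\langle v,v\rangle)$, and then bound the average by the maximum, which equals $\t(\langle u,u\rangle+\langle v,v\rangle)$. The paper's proof is the one-line version of exactly this chain of inequalities.
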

\begin{proof}
This immediately follows from the ELT Cauchy-Schwarz inequality, since
$$\t\left(\left\langle u,v\right\rangle\right)\leq\frac{1}{2}\cdot_{\RR}\left(\t\left(\left\langle u,u\right\rangle\right)+\t\left(\left\langle v,v\right\rangle\right)\right)\leq\max\left\{\t\left(\left\langle u,u\right\rangle\right),\t\left(\left\langle v,v\right\rangle\right)\right\}$$
\end{proof}

We extend this result to several vectors.\\

\begin{lem}\label{orth2}
If $v_1,...,v_k\in \left(\overline{\R}\right)^n$, then there exists some $p$ for which
$$\t\left(\left\langle v_p, v_p\right\rangle\right) \geq \t\left(\sum_{1\leq j\neq p \leq k}\left\langle v_j, v_p\right\rangle\right).$$
\end{lem}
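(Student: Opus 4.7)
The plan is to pick the index $p$ that maximises $\t\left(\left\langle v_p,v_p\right\rangle\right)$ and then reduce the assertion to the two-vector case handled in \Cref{CS}. Concretely, let $p\in\{1,\dots,k\}$ be such that
$$\t\left(\left\langle v_p,v_p\right\rangle\right)=\max_{1\leq i\leq k}\t\left(\left\langle v_i,v_i\right\rangle\right),$$
breaking ties arbitrarily. (If all $v_i=(-\infty,\dots,-\infty)$, both sides of the desired inequality are $-\infty$ and there is nothing to prove.)

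First, I would observe that in any ELT algebra the tangible value of a finite sum equals the maximum of the tangible values of its summands. This follows immediately by induction from the definition of the ELT addition, which never lowers the tangible value. Applying this to $\sum_{j\neq p}\left\langle v_j,v_p\right\rangle$ gives
$$\t\!\left(\sum_{1\leq j\neq p\leq k}\left\langle v_j,v_p\right\rangle\right)=\max_{j\neq p}\,\t\!\left(\left\langle v_j,v_p\right\rangle\right).$$

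Second, for each individual $j\neq p$, \Cref{CS} applied to the pair $(v_j,v_p)$ yields
$$\t\!\left(\left\langle v_j,v_p\right\rangle\right)\leq\max\left\{\t\!\left(\left\langle v_j,v_j\right\rangle\right),\,\t\!\left(\left\langle v_p,v_p\right\rangle\right)\right\}.$$
By the choice of $p$, the term $\t\!\left(\left\langle v_p,v_p\right\rangle\right)$ already dominates $\t\!\left(\left\langle v_j,v_j\right\rangle\right)$, so the right-hand side collapses to $\t\!\left(\left\langle v_p,v_p\right\rangle\right)$. Taking the maximum over $j\neq p$ and combining with the first observation yields the desired inequality.

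The argument is essentially a routine reduction, so no real obstacle is expected; the only mildly delicate point is making sure that the maximiser $p$ is well defined, which is why I would dispose of the entirely-$-\infty$ case at the outset.
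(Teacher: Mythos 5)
Your proof is correct, and it uses the same key ingredient (Corollary \ref{CS}) as the paper; the paper argues by contradiction while you give the direct contrapositive, explicitly exhibiting a maximiser $p$ of $\t\left(\left\langle v_i,v_i\right\rangle\right)$ and showing every off-diagonal term $\left\langle v_j,v_p\right\rangle$ is dominated. Your version is somewhat cleaner and in fact makes transparent the slightly stronger fact (implicit in both proofs) that $\t\left(\left\langle v_p,v_p\right\rangle\right)$ dominates $\t\left(\left\langle v_i,v_j\right\rangle\right)$ for \emph{all} pairs $i,j$, not only those involving $p$.
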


\begin{proof}
Assume
$$\forall p :\t\left(\sum_{1\leq i,j \leq k}\left\langle v_i, v_j\right\rangle\right)> \t\left(\left\langle v_p, v_p\right\rangle\right),$$
and choose a specific $i\neq j$ such that $\forall p:\t\left(\left\langle v_i, v_j\right\rangle\right)>\t\left(\left\langle v_p, v_p\right\rangle\right)$. In particular, it follows that $\t\left(\left\langle v_i, v_j\right\rangle\right)>\t\left(\left\langle v_i, v_i\right\rangle\right)$. Thus by \Cref{CS} $\t\left(\left\langle v_i, v_j\right\rangle\right)\leq \t\left(\left\langle v_j, v_j\right\rangle\right)$, which contradicts our assumption.\\

Therefore there exists some $p$ for which
$$\t\left(\left\langle v_p, v_p\right\rangle\right)\geq \t\left(\sum_{1\leq i,j \leq k}\left\langle v_i, v_j\right\rangle\right),$$
and specifically
$$\t\left(\left\langle v_p, v_p\right\rangle\right) \geq \t\left(\sum_{1\leq j\neq p \leq k}\left\langle v_j, v_p\right\rangle\right).$$
\end{proof}

\subsection{Orthogonality}

\begin{defn}
Consider $u,v\in\left(\overline{\R}\right)^n$. We say $u,v$ are \textbf{ELT orthogonal} and write $u\perp v$ if
$$s\Big(\left\langle u,v\right\rangle\Big)=0_\mathbb{C}.$$
\end{defn}

\begin{thm}
If $v_1,...,v_k\in(\overline{\R^{\times}})^n$ are vectors such that
$$\forall i:v_i\neq (-\infty,...,-\infty)$$
and
$$\forall i\neq j:v_i\perp v_j,$$
then $v_1,...,v_k$ are linearly independent.\\
\end{thm}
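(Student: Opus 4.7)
The plan is a proof by contradiction. Suppose $v_1,\ldots,v_k$ are linearly dependent: there exist $\alpha_1,\ldots,\alpha_k\in\R^{\times}$ with $s\bigl(\sum_{i=1}^{k}\alpha_i v_i\bigr)=(0,\ldots,0)$. Write $w=\sum_{i=1}^{k}\alpha_i v_i$. The strategy is to pair $w$ against a well-chosen vector of the form $w_p:=\alpha_p v_p$ and compute $\langle w,w_p\rangle$ two different ways, obtaining incompatible conclusions about its layer.

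First, I rescale by setting $w_i:=\alpha_i v_i$. Since $\alpha_i\in\R^{\times}$ and $v_i\in\bigl(\overline{\R^{\times}}\bigr)^n$ with $v_i\neq(-\infty,\ldots,-\infty)$, and $\mathbb{C}$ is an integral domain, each $w_i$ lies in $\bigl(\overline{\R^{\times}}\bigr)^n$ and is not the all $-\infty$ vector. Using axioms 1 and 2 of the inner product, $\langle w_i,w_j\rangle=\alpha_i\overline{\alpha_j}\langle v_i,v_j\rangle$, so pairwise ELT-orthogonality of the $v_i$'s transfers to the $w_i$'s (the layer of the inner product is multiplied by $s(\alpha_i)\overline{s(\alpha_j)}$, which cannot resurrect a zero layer). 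Applying \Lref{orth2} to $w_1,\ldots,w_k$ then produces an index $p$ with
$$\t(\langle w_p,w_p\rangle)\ge\t\Bigl(\sum_{j\neq p}\langle w_j,w_p\rangle\Bigr).$$

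Now I evaluate $\langle w,w_p\rangle$ in two ways. On the one hand, linearity in the first argument together with orthogonality of the $w_i$'s gives
$$\langle w,w_p\rangle=\langle w_p,w_p\rangle+\sum_{j\neq p}\langle w_j,w_p\rangle,$$
where the first summand has non-zero layer by axiom 3 applied to $w_p$, while each of the remaining summands has layer zero. By the tangible-value inequality above, either $\t(\langle w_p,w_p\rangle)$ strictly dominates (so the full sum equals $\langle w_p,w_p\rangle$), or the two tangible values coincide and the layers add; but a non-zero layer plus a zero layer remains non-zero. In either case, $s(\langle w,w_p\rangle)\neq 0$.

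On the other hand, decomposing $w=\sum_{j=1}^{n}w_{(j)}e_j$ in the standard basis and applying axiom 1 again yields $\langle w,w_p\rangle=\sum_{j=1}^{n}w_{(j)}\langle e_j,w_p\rangle$. Each summand has layer $s(w_{(j)})\cdot s(\langle e_j,w_p\rangle)=0$ because $s(w_{(j)})=0$ for every $j$, and a sum of layer-zero elements is layer-zero; hence $s(\langle w,w_p\rangle)=0$, contradicting the preceding paragraph. The most delicate point is the layer computation in the first direction, where one must verify that the layer-zero corrections $\sum_{j\neq p}\langle w_j,w_p\rangle$ cannot cancel the non-zero layer of $\langle w_p,w_p\rangle$; this is guaranteed precisely by the tangible-value bound supplied by \Lref{orth2}, which is why that lemma was established in advance.
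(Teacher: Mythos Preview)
Your proof is correct and follows essentially the same route as the paper: assume dependence, rescale to $w_i=\alpha_i v_i$, invoke \Lref{orth2} to find $p$, and compute $\langle\sum_i w_i,\,w_p\rangle$ two ways to force $s(\langle w_p,w_p\rangle)=0$, contradicting axiom~3. The only cosmetic difference is in the second computation: where you expand $w$ in the standard basis and note each summand is layer zero, the paper observes directly that $w=\layer{0}{0}w$ (since $s(w)=0$) and pulls the scalar $\layer{0}{0}$ out by linearity --- this is slightly cleaner since it avoids choosing a basis, but the two arguments are equivalent.
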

\begin{proof}
Assume that $v_1,...,v_k$ are linearly dependent. Then there exists $\alpha_1,...,\alpha_k\in \R^{\times}$ such that
$$s\Big(\alpha_1v_1+...+\alpha_kv_k\Big)=(0_\mathbb{C},...,0_\mathbb{C}).$$
Therefore,
$$\alpha_1v_1+...+\alpha_kv_k=\layer{0}{0}\left(\alpha_1v_1+...+\alpha_kv_k\right)$$

If $u_i=\alpha_iv_i$, then by \Lref{orth2} there exists $p$ such that
$$\t\left(\left\langle u_p, u_p\right\rangle \right)\geq\t\left( \sum_{1\leq j\neq p \leq k}\left\langle u_j, u_p\right\rangle\right).$$

Multiplying by $u_p$, we obtain
\begin{eqnarray*}
% \nonumber % Remove numbering (before each equation)
  s\Big(\left\langle u_1, u_p\right\rangle + ... + \left\langle u_k, u_p\right\rangle \Big) &=& s\Big(\left\langle u_1+\cdots+u_k, u_p\right\rangle\Big)= \\
   &=& s\Big(\left\langle \layer{0}{0}\left(u_1+\cdots+u_k\right), u_p\right\rangle\Big)=\\
   &=& s\Big(\layer{0}{0}\left\langle u_1+\cdots+u_k, u_p\right\rangle\Big)=0_\CC
\end{eqnarray*}
Therefore $\forall i\neq p: s\Big(\left\langle u_i, u_p\right\rangle\Big)=0_\mathbb{C}$ and $\left\langle u_p, u_p\right\rangle$ dominates all other term. It follows that $s\Big(\left\langle u_p, u_p\right\rangle\Big)=0_\mathbb{C}$, which is absurd.
\end{proof}

We now aim to prove that every orthogonal set of size $k<n$ can be extended to an orthogonal set of size $n$. Before we do that, we need the concept of Gramian matrices.

\begin{defn}
Let $B=\left\{v_1,\dots,v_n\right\}$ be a subset of $\left(\overline{\R}\right)^n$. We define the \textbf{Gramian matrix} of~$\left\langle\cdot,\cdot\right\rangle$ with respect to $B$ as
$$G_B=\left(\begin{matrix}\left\langle v_1,v_1\right\rangle&\cdots&\left\langle v_1,v_n\right\rangle\\\vdots&\ddots&\vdots\\\left\langle v_n,v_1\right\rangle&\cdots&\left\langle v_n,v_n\right\rangle\end{matrix}\right)\in \left(\R\right)^{n\times n}$$
\end{defn}
\begin{rem}
As in the classical theory, we get two immediate facts about the Gramian matrix:
\begin{enumerate}
  \item If $B=\left\{e_1,\dots,e_n\right\}$ is the standard basis of $\left(\overline{\R}\right)^n$, then
$$\left\langle u,v\right\rangle=u^t G_B\overline{w}$$
where $\overline{w}$ is the vector in $\left(\overline{\R}\right)^n$ defined by $\left(\overline{w}\right)_i=\overline{w_i}$.
  \item For any set of vectors $B=\left\{v_1,\dots,v_n\right\}\subseteq\left(\overline{\R}\right)^n$, $G^t=\overline{G}$, where $\left(\overline{G}\right)_{i,j}=\overline{\left(G\right)_{i,j}}$.
\end{enumerate}
\end{rem}

\begin{lem}
If $G_B$ is nonsingular, then $B$ is linearly independent.
\end{lem}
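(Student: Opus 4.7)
The plan is to prove the contrapositive: assuming $B = \{v_1, \dots, v_n\}$ is linearly dependent, I will show that $G_B$ is singular by invoking \Tref{minusinftydet}, which reduces singularity to linear dependence of rows.

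First I would extract coefficients: by the definition of linear dependence, after relabeling there exist $\alpha_1, \dots, \alpha_m \in \R^\times$ such that $w := \sum_{i=1}^{m} \alpha_i v_i$ satisfies $s(w) = (0, \dots, 0)$. The key preliminary observation is that when $w \in (\overline{\R})^n$ has $s(w) = 0$ coordinatewise, each coordinate is either $-\infty$ or of the form $\layer{a}{0}$, and in both cases $\layer{0}{0} \cdot w_i = w_i$. Hence $w = \layer{0}{0}\, w$ as vectors.

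Next I would compute the $j$-th coordinate of the row combination $\sum_{i=1}^{m} \alpha_i R_i(G_B)$. Using bilinearity of the inner product in the first argument, together with the previous observation,
$$\sum_{i=1}^{m} \alpha_i \langle v_i, v_j \rangle = \left\langle \sum_{i=1}^{m} \alpha_i v_i,\, v_j \right\rangle = \langle w, v_j \rangle = \langle \layer{0}{0}\, w, v_j \rangle = \layer{0}{0}\, \langle w, v_j \rangle,$$
which has layer zero. Consequently the first $m$ rows of $G_B$ are linearly dependent in the ELT sense, and by \Tref{minusinftydet} this forces $s(\det G_B) = 0$, i.e., $G_B$ is singular, contradicting the hypothesis.

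The only nontrivial point, which is the ``obstacle'' worth flagging, is the passage $w = \layer{0}{0}\, w$ for layer-zero vectors together with the pull-out $\langle \layer{0}{0}\, w, v_j \rangle = \layer{0}{0}\, \langle w, v_j \rangle$; once this is set up, everything else is a direct application of linearity and \Tref{minusinftydet}.
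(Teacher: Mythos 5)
Your proof is correct and follows essentially the same route as the paper's: argue by contradiction from a layer-zero linear combination of the $v_i$, push it through the inner product by linearity to get a layer-zero combination of rows (the paper says ``columns'', but it is the same computation and equivalent by \Tref{minusinftydet}), and conclude $G_B$ is singular. The only difference is that you make explicit the justification $w = \layer{0}{0}\,w$ and the scalar pull-out $\langle \layer{0}{0}\,w, v_j\rangle = \layer{0}{0}\langle w,v_j\rangle$, a step the paper leaves implicit; this is a worthwhile clarification, not a different argument.
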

\begin{proof}
Suppose that
$$s\left(\sum_{i=1}^n\alpha_i v_i\right)=\left(0_{\CC},\dots,0_{\CC}\right)$$
for some $\alpha_1,\dots,\alpha_n\in\overline{\R^{\times}}$, where not all the $\alpha_i$ are $-\infty$. Thus, for any $1\leq j\leq n$,
\begin{eqnarray*}
% \nonumber % Remove numbering (before each equation)
  s\left(\left\langle\sum_{i=1}^n\alpha_i v_i,v_j\right\rangle\right) &=& \left(0_{\CC},\dots,0_{\CC}\right) \\
  s\left(\sum_{i=1}^n\alpha_i\left\langle v_i,v_j\right\rangle\right) &=& \left(0_{\CC},\dots,0_{\CC}\right)
\end{eqnarray*}
This proves that the columns of $G_B$ are linearly dependent, in contradiction with \Tref{minusinftydet}.
\end{proof}

Unfortunately, the converse may not hold, as we see in the following example:
\begin{example}
We take $\overline{\R}^2$ with the standard inner product, and consider
$$u=\left(\begin{matrix}\layer{2}{1}\\\layer{0}{1}\end{matrix}\right), v=\left(\begin{matrix}\layer{2}{1}\\\layer{1}{1}\end{matrix}\right).$$
As we have seen in \Eref{exam:equality in CS with independent vectors}, $\left\{u,v\right\}$ is a linearly independent set, and
$$u\cdot u=u\cdot v=v\cdot v=\layer{4}{1}.$$
Thus
$$G_B=\left(\begin{matrix}\layer{4}{1}&\layer{4}{1}\\\layer{4}{1}&\layer{4}{1}\end{matrix}\right)$$
which is trivially singular.
\end{example}

\begin{lem}
Let $v_1,...,v_k\in (\overline{\R^{\times}})^n$ such that $k<n$, $v_i\perp v_j$ for all $i\neq j$, and $v_i\neq (-\infty,...,-\infty)$ for all $i$. Then there exist $v_{k+1},...,v_n$ such that the set $\{v_1,...,v_n\}$ is orthogonal.\\
\end{lem}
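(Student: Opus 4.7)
The plan is to prove the statement by induction on $n-k$, so it suffices to find a single vector $v_{k+1}\in(\overline{\R^{\times}})^n\setminus\{(-\infty,\dots,-\infty)\}$ that is orthogonal to each of $v_1,\dots,v_k$; the induction hypothesis then delivers the remaining vectors in one step. Throughout I work with the standard inner product, for which $\langle w,v_i\rangle=\sum_{j=1}^{n}w_j\,\overline{(v_i)_j}$.

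I introduce the matrix $M\in(\overline{\R^{\times}})^{k\times n}$ whose $(i,j)$-entry is $\overline{(v_i)_j}$, so that the desired condition ``$v_{k+1}\perp v_i$ for every $i$'' becomes the requirement that every coordinate of the product $Mv_{k+1}$ be of layer zero. Since the hypothesis says that $v_1,\dots,v_k$ are pairwise orthogonal and none of them equals $(-\infty,\dots,-\infty)$, the previous theorem guarantees that they are linearly independent; conjugation sends $0_{\CC}$ to $0_{\CC}$ and therefore preserves linear independence, so the rows of $M$ are linearly independent as well. By the Rank Theorem (\Tref{rank}), this yields $\ELTsubmatrixrank(M)=k$.

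Next I invoke \Lref{same_rank_lift} to obtain a lift $\tilde M\in\mathbb{K}^{k\times n}$ with $\ELTrop[\tilde M]=M$ and $\rank(\tilde M)=k$. Since $k<n$, the classical kernel of $\tilde M$ in $\mathbb{K}^n$ has positive dimension, and I pick any $\tilde w\in\mathbb{K}^n\setminus\{0\}$ with $\tilde M\tilde w=0$. Set $v_{k+1}:=\ELTrop[\tilde w]$; because $\mathbb{F}$ is a field, every coordinate of $v_{k+1}$ is either $-\infty$ or has a nonzero layer, so $v_{k+1}\in(\overline{\R^{\times}})^n$, and since $\tilde w\neq 0$ at least one coordinate is different from $-\infty$.

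It remains to check the orthogonality. Applying the matrix version of property~3 of \Lref{lem:ELTrop-prop} (stated just before the proof of \Tref{minusinftydet}) coordinatewise gives
\[
Mv_{k+1}\;=\;\ELTrop[\tilde M]\cdot\ELTrop[\tilde w]\;\vDash\;\ELTrop[\tilde M\tilde w]\;=\;\ELTrop[0]\;=\;(-\infty,\dots,-\infty),
\]
and \Lref{lem:surpass-minus-infty-means-zero-layer} then forces $s\bigl((Mv_{k+1})_i\bigr)=0$ for every $i$, which is precisely the statement $v_{k+1}\perp v_i$. The main obstacle is obtaining a good lift that simultaneously respects the tropicalization and has full row rank over $\mathbb{K}$; this is exactly what the combination of the Rank Theorem and \Lref{same_rank_lift} provides, so the argument goes through, and iterating yields the required orthogonal completion $\{v_1,\dots,v_n\}$.
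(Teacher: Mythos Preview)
Your argument is correct for the standard inner product, and the inductive scheme matches the paper's. Two points of comparison are worth noting.

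First, the lemma is stated in the setting of an arbitrary ELT inner product, and the paper's proof handles that generality by passing through the Gramian matrix $G$ of $\langle\cdot,\cdot\rangle$ with respect to the standard basis: since $\langle w,v_i\rangle=w^tG\,\overline{v_i}$, the orthogonality condition reads $s\bigl((\overline{v_i}^{\,t}G^t)\,w\bigr)=0$, and the paper forms the matrix $A_\ell$ whose rows are $\overline{v_i}^{\,t}G^t$ in place of your $M$ (which is the special case $G=I$). Your argument carries over verbatim once this substitution is made, so the restriction is easy to remove.

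Second, your route through $\mathbb{K}$ is more elaborate than necessary. You invoke the previous theorem to show the rows of $M$ are independent, use the Rank Theorem to get $\ELTsubmatrixrank(M)=k$, lift via \Lref{same_rank_lift}, and then find a kernel vector over $\mathbb{K}$. None of this is needed: the matrix has only $k<n$ rows, so its ELT row rank is at most $k$, hence by the Rank Theorem its $n$ columns are already linearly dependent in the ELT sense, and the coefficients of that dependence directly furnish $v_{k+1}\in(\overline{\R^\times})^n\setminus\{(-\infty,\dots,-\infty)\}$ with $s(Mv_{k+1})=0$. This is exactly what the paper does. Your lifting step is valid but effectively re-derives a special case of the Rank Theorem rather than simply citing it; even within your approach, any lift (e.g.\ the monomial one) would suffice, since a $k\times n$ matrix over a field with $k<n$ always has nonzero kernel regardless of its rank.
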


\begin{proof}
Let $S=\left\{e_1,\dots,e_n\right\}$ be the standard basis of $\left(\overline{\R}\right)^n$, and let $G$ be the Gramian matrix of $\left\langle\cdot,\cdot\right\rangle$ with respect to $S$. We construct $v_{k+1},\dots,v_n$ by induction.\\

Suppose that we have already constructed $\left\{v_1,\dots,v_\ell\right\}$ for $k\leq\ell<n$ such that $v_i\perp v_j$ for all $i\neq j$. We want to find $v_{\ell+1}\in\left(\overline{\R^{\times}}\right)^n$ such that $v_{\ell+1}\perp v_i$ for all $i\leq\ell$. If such $v_{\ell+1}$ exists, it should satisfy
$$s\left(\left\langle v_{\ell+1},v_i\right\rangle\right)=0 \Longleftrightarrow s\left(v_{\ell+1}^tG\overline{v_i}\right)=0\Longleftrightarrow s\left(\left(\overline{v_i}^t G^t\right)v_{\ell+1}\right)=0$$
for all $i\leq\ell$. Consider the matrix $A_\ell\in\left(\overline{\R}\right)^{\ell\times n}$, where
$$R_i\left(A_\ell\right)=\overline{v_i}^t G^t$$
for all $i\leq\ell$. Since $\ell<n$, the row rank of $A_\ell$ is at most $\ell$; by \Tref{rank}, its columns are linearly dependent. Hence there exists $v_{\ell+1}\in\left(\overline{\R^{\times}}\right)^n$ such that $s\left(A_\ell v_{\ell+1}\right)=\left(0_\CC,\dots,0_\CC\right)$; thus, by the above equivalences, $v_{\ell+1}\perp v_i$ for all $i\leq\ell$, as desired.
\end{proof}

\begin{example}
In this example, we present a linearly independent set $S=\{v_1,v_2\}$ which is not orthogonal, and a vector $v_3$ which is orthogonal to $S$, whereas the set $\{v_1,v_2,v_3\}$ is linearly dependent. We consider $\overline{\R}^3$ with the standard inner product, and
$$v_1=\left(\begin{matrix}\layer{2}{1}\\\layer{2}{-1}\\\layer{1}{-1}\end{matrix}\right), v_2=\left(\begin{matrix}\layer{2}{-1}\\\layer{2}{1}\\\layer{1}{-1}\end{matrix}\right), v_3=\left(\begin{matrix}\layer{1}{1}\\\layer{1}{1}\\\layer{1}{2}\end{matrix}\right).$$
These vectors are linearly dependent since
$$v_1+v_2+v_3=(\layer{2}{0},\layer{2}{0},\layer{1}{0}).$$
However, it is easy to see that $v_1,v_2$ are linearly independent, and that $v_3$ is orthogonal to both $v_1$ and $v_2$:
$$v_2\cdot v_3=v_1\cdot v_3=\layer{3}{0}.$$

\end{example}

\begin{defn}
Let $S=\left\{v_1,\dots,v_k\right\}\subseteq\left(\overline{\R}\right)^n$. We say that $S$ is \textbf{ELT orthonormal} if $v_i\perp v_j$ for any $i\neq j$ and $\left\langle v_i,v_i\right\rangle=\layer{0}{1}$.
\end{defn}

We can now prove an ELT version of Bessel's inequality:
\begin{thm}[ELT Bessel's Inequality]\label{thm:bessel-inequality}
Let $S=\left\{v_1,\dots,v_k\right\}\subseteq\left(\overline{\R}\right)^n$ be an ELT orthonormal set of vectors, and let $v\in\left(\overline{\R}\right)^n$. Let
$$u=\sum_{i=1}^{k}\left\langle v,v_i\right\rangle v_i$$
be the projection of $v$ on the subspace spanned by $S$. Then
$$\t\left(\left\langle u,u\right\rangle\right)\leq\t\left(\left\langle v,v\right\rangle\right)$$

Moreover, there is equality in Bessel's inequality if and only if there is some $1\leq i\leq n$ such that
$$\t\left(\left\langle v, v_i\right\rangle^2\right)=\t\left(\left\langle v,v\right\rangle\left\langle v_i,v_i\right\rangle\right)=\t\left(\left\langle v,v\right\rangle\right)$$
\end{thm}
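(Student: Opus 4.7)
The plan is to expand $\langle u, u\rangle$ by sesquilinearity and then bound the tangible value of each summand using the ELT Cauchy-Schwarz inequality (\Tref{thm:cauchy-schwarz}). A key preliminary observation is that, because $\F=\RR$ is totally ordered, the tangible value of any sum equals the maximum of the tangible values of its summands, even when the layers of the dominant terms cancel. Consequently, to control $\t(\langle u,u\rangle)$ it suffices to bound the tangible value of every term in
$$\langle u,u\rangle=\sum_{i,j=1}^{k}\langle v,v_i\rangle\,\overline{\langle v,v_j\rangle}\,\langle v_i,v_j\rangle.$$

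For the diagonal terms ($i=j$), orthonormality gives $\t(\langle v_i,v_i\rangle)=0$, so the summand has tangible value $2\t(\langle v,v_i\rangle)=\t(\langle v,v_i\rangle^2)$, which is bounded by $\t(\langle v,v\rangle\langle v_i,v_i\rangle)=\t(\langle v,v\rangle)$ via Cauchy-Schwarz applied to $(v,v_i)$. For the off-diagonal terms ($i\ne j$), I would apply Cauchy-Schwarz twice: first to the orthogonal pair $(v_i,v_j)$, giving $2\t(\langle v_i,v_j\rangle)\leq\t(\langle v_i,v_i\rangle)+\t(\langle v_j,v_j\rangle)=0$, hence $\t(\langle v_i,v_j\rangle)\leq 0$; and then to each of $(v,v_i)$ and $(v,v_j)$, giving $\t(\langle v,v_\ell\rangle)\leq\tfrac{1}{2}\t(\langle v,v\rangle)$. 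Summing these three bounds shows each off-diagonal tangible value is also at most $\t(\langle v,v\rangle)$, and taking the maximum over all summands yields Bessel's inequality.

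For the equality characterization, sufficiency is immediate: if some $i$ satisfies $\t(\langle v,v_i\rangle^2)=\t(\langle v,v\rangle)$, then the diagonal summand for that $i$ already attains tangible value $\t(\langle v,v\rangle)$, forcing equality. For necessity, if $\t(\langle u,u\rangle)=\t(\langle v,v\rangle)$, some summand must realize this value. A diagonal term $(i,i)$ doing so supplies the index $i$ directly. If instead an off-diagonal term $(i,j)$ attains the bound, then all three of $\t(\langle v,v_i\rangle)\leq\tfrac12\t(\langle v,v\rangle)$, $\t(\langle v,v_j\rangle)\leq\tfrac12\t(\langle v,v\rangle)$, $\t(\langle v_i,v_j\rangle)\leq 0$ must be simultaneously tight, and the first equality already witnesses $\t(\langle v,v_i\rangle^2)=\t(\langle v,v\rangle)$, giving the required index.

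The main obstacle, relative to the classical proof, is that ELT orthogonality only forces $\langle v_i,v_j\rangle$ to have layer zero, not to vanish; so the off-diagonal cross terms genuinely survive in the expansion of $\langle u,u\rangle$, and the bulk of the work is showing their tangible values cannot exceed $\t(\langle v,v\rangle)$. Once this book-keeping is set up via Cauchy-Schwarz, the rest of the argument mirrors the classical one, with tangible-value inequalities playing the role of equalities.
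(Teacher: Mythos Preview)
Your argument is correct and follows the same overall strategy as the paper: expand $\langle u,u\rangle$ by sesquilinearity and control every summand via \Tref{thm:cauchy-schwarz}. The one substantive difference is in the treatment of the cross terms. You bound each off-diagonal summand directly by $\t(\langle v,v\rangle)$ using three applications of Cauchy--Schwarz. The paper instead compares each off-diagonal term $\langle v,v_i\rangle\overline{\langle v,v_j\rangle}\langle v_i,v_j\rangle$ to the diagonal term $\langle v,v_j\rangle^2$ (assuming $\t(\langle v,v_i\rangle)\le\t(\langle v,v_j\rangle)$), and then uses that the off-diagonal term has layer zero (from orthogonality) to conclude it contributes nothing to the sum. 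This yields the sharper intermediate identity $\langle u,u\rangle=\sum_{i=1}^k\langle v,v_i\rangle^2$, from which both the inequality and the equality characterization are immediate. Your route avoids this identity at the cost of a short extra case analysis in the equality clause (diagonal vs.\ off-diagonal summand attaining the bound), but that analysis is carried out correctly.
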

\begin{proof}
By expanding the LHS,
$$\left\langle u,u\right\rangle=\sum_{i,j=1}^{k}\left\langle v,v_i\right\rangle\left\langle v,v_j\right\rangle\left\langle v_i,v_j\right\rangle=\sum_{i=1}^{k}\left\langle v,v_i\right\rangle^2+\sum_{\substack{i,j=1\\i\neq j}}^{k}\left\langle v,v_i\right\rangle\left\langle v,v_j\right\rangle\left\langle v_i,v_j\right\rangle.$$

We first show that the summands where $i\neq j$ do not contribute to the last sum. Indeed, if $i\neq j$, without loss of generality we assume $\t\left(\left\langle v,v_i\right\rangle\right)\leq\t\left(\left\langle v,v_j\right\rangle\right)$. By Cauchy-Schwarz inequality,
$$\t\left(\left\langle v_i,v_j\right\rangle\right)\leq\t\left(\left\langle v_i,v_i\right\rangle\left\langle v_j,v_j\right\rangle\right)=\t\left(\layer{0}{1}\cdot\layer{0}{1}\right)=0_{\RR}$$
Thus
$$\t\left(\left\langle v,v_i\right\rangle\left\langle v,v_j\right\rangle\left\langle v_i,v_j\right\rangle\right)=\t\left(\left\langle v,v_i\right\rangle\right)+_{\RR}\t\left(\left\langle v,v_j\right\rangle\right)+_{\RR}\t\left(\left\langle v_i,v_j\right\rangle\right)\leq\t\left(\left\langle v,v_j\right\rangle\right)+_{\RR}\t\left(\left\langle v,v_j\right\rangle\right)=\t\left(\left\langle v,v_j\right\rangle^2\right)$$
Since $s\left(\left\langle v,v_i\right\rangle\left\langle v,v_j\right\rangle\left\langle v_i,v_j\right\rangle\right)=0$, it will not contribute to the total sum. Hence
$$\left\langle u,u\right\rangle=\sum_{i=1}^{k}\left\langle v,v_i\right\rangle^2.$$

Finally, using Cauchy-Schwarz inequality,
$$\t\left(\left\langle u,u\right\rangle\right)=\t\left(\sum_{i=1}^{k}\left\langle v,v_i\right\rangle^2\right)=\max_{1\leq i\leq k}\t\left(\left\langle v,v_i\right\rangle^2\right)\leq\t\left(\left\langle v_i,v_i\right\rangle\left\langle v,v\right\rangle\right)=\t\left(\left\langle v,v\right\rangle\right)$$

The equality assertion is obvious from the above inequality.
\end{proof}

%%------------------------------------------
\bibliographystyle{plain}
\bibliography{references}

\end{document}